\documentclass[11pt, a4paper]{amsart}
\usepackage{mathrsfs}

\usepackage{amssymb}
\usepackage{amsfonts}

\usepackage{amscd}
\usepackage{bbm}
\usepackage{mathabx}
\usepackage{skak}

\usepackage{color}

\usepackage{hyperref}
\hypersetup{
    colorlinks,
    citecolor=black,
    filecolor=black,
    linkcolor=black,
    urlcolor=black
}

\usepackage[all]{xy}

\mathsurround=4pt

\textwidth=16.9cm \textheight=23.6cm \addtolength{\topmargin}{-35pt}
\addtolength{\oddsidemargin}{-2cm}
\addtolength{\evensidemargin}{-2cm}

\sloppy

\newtheorem{proposition}{Proposition}[section]

\newtheorem{lemma}[proposition]{Lemma}
\newtheorem{definition}[proposition]{Definition}
\newtheorem{theorem}[proposition]{Theorem}

\newtheorem{corollary}[proposition]{Corollary}

\newtheorem{example}[proposition]{Example}
\newtheorem{remark}[proposition]{Remark}

\newtheorem{lemma-definition}[proposition]{Lemma-Definition}


\newcounter{tmp}


\def\coh{\operatorname{coh}}
\def\Qcoh{\operatorname{Qcoh}}

\def\lto{\longrightarrow}

\def\A{{\mathcal A}}
\def\B{{\mathcal B}}

\def\D{{\mathcal D}}
\def\F{{\mathcal F}}
\def\E{{\mathcal E}}

\def\H{{\mathcal H}}
\def\K{{\mathcal K}}
\def\N{{\mathcal N}}
\def\cO{{\mathcal O}}
\def\R{{\mathcal R}}
\def\L{{\mathcal L}}

\def\V{{\mathcal V}}
\def\U{{\mathcal U}}

\def\M{{\mathcal M}}

\def\T{{\mathcal T}}
\def\I{{\mathcal I}}

\def\P{{\mathcal{P}}}

\def\ZZ{{\mathbb Z}}
\def\CC{{\mathbb C}}

\def\bD{{\mathbf D}}
\def\bR{{\mathbf R}}

\def\bp{{\mathbf p}}
\def\bL{{\mathbf L}}
\def\be{\overline{\mathbf{e}}}

\def\Ga{\Gamma}
\def\La{\Lambda}

\def\NN{{\mathbb N}}
\def\ZZ{{\mathbb Z}}

\def\CC{{\mathbb C}}

\def\PP{{\mathbb P}}

\def\Hom{\operatorname{Hom}}
\def\End{\operatorname{End}}
\def\Ext{\operatorname{Ext}}
\def\Pic{\operatorname{Pic}}

\def\Spec{\operatorname{Spec}}

\def\id{{\operatorname{id}}}

\def\kk{{\mathbf k}}

\def\tors{\operatorname{tors}\,}

\def\rad{\mathfrak{R}}
\def\op{\circ}

%
%

\newcommand{\Ho}{{\H^0}}

\newcommand{\Ob}{\operatorname{Ob}}

\newcommand{\SF}{\dS\!\dF\!\operatorname{--}\!}
\newcommand{\SFf}{\dS\!\dF_{fg}\!\operatorname{--}\!}
\newcommand{\prfdg}{\mathscr{P}\!\mathit{erf}\!\operatorname{--}}
\newcommand{\dRep}{\mathscr{R}\!\mathit{ep}}
\newcommand{\dMor}{\mathscr{M}\!\mathit{or}}

\newcommand{\Ac}{\dA\!\mathit{c}\!\operatorname{--}\!}

\newcommand{\prf}{\mathcal{P}\!\mathit{erf}\!\operatorname{--}\!}

\def\dA{\mathscr A}
\def\dB{\mathscr B}
\def\dC{\mathscr C}
\def\dD{\mathscr D}
\def\dE{\mathscr E}

\def\dF{\mathscr F}

\def\dL{\mathscr L}
\def\dM{\mathscr M}
\def\dN{\mathscr N}
\def\dP{\mathscr P}
\def\dR{\mathscr R}
\def\dS{\mathscr S}
\def\dT{\mathscr T}
\def\dU{\mathscr U}
\def\dV{\mathscr V}
\def\dW{\mathscr W}
\def\dX{\mathscr X}
\def\dY{\mathscr Y}
\def\dZ{\mathscr Z}

\def\Mod{{\mathscr M}\!\mathit{od}\!\operatorname{--}\!}

\def\mE{\mathsf E}
\def\mG{\mathsf G}
\def\mF{\mathsf F}
\def\mM{\mathsf M}
\def\mN{\mathsf N}
\def\mP{\mathsf P}
\def\mQ{\mathsf Q}
\def\mR{\mathsf R}
\def\mS{\mathsf S}
\def\mT{\mathsf T}
\def\mU{\mathsf U}

\def\mX{\mathsf X}
\def\mY{\mathsf Y}
\def\mZ{\mathsf Z}

\def\ma{\mathsf a}
\def\mb{\mathsf b}
\def\mc{\mathsf c}

\def\mh{\mathsf h}
\def\mf{\mathsf f}
\def\mg{\mathsf g}
\def\mi{\mathsf i}
\def\mj{\mathsf j}

\def\ml{\mathsf l}

\def\mp{\mathsf{p}}

\def\mr{\mathsf{r}}

\def\mw{\mathsf{w}}
\def\mv{\mathsf{v}}

\def\mpr{\mathsf{pr}}
\def\mId{\mathsf{id}}

\def\mPhi{\mathsf \Phi}

\def\m0{\mathsf 0}

\def\dHom{\mathsf{Hom}}
\def\dEnd{\mathsf{End}}

\def\ptr{\operatorname{pre-tr}}

\def\lHom{\underline{\H\!\mathit{om}}}

\def\wt{\widetilde}

\def\Rep{{\R\!\mathit{ep}}}
\def\Mor{{\M\!\mathit{or}}}

\def\rd{\mathfrak{r}}
\def\md{\operatorname{mod}\!\operatorname{--}\!}
\def\Md{\operatorname{Mod}\!\operatorname{--}\!}

\def\gldim{\operatorname{gl.dim}\;}
\def\Fl{\operatorname{Filt}}
\def\pt{\mathbf{pt}}

{\endgroup\hfill$\Box$}

{\endgroup\hfill$\Box$}

\def\hy{\mbox{-}}


\def\bi{\mathbf i}

\def\wL{\widetilde{\L}}

\def\Si{\Sigma}
\def\Iota{\operatorname{I}}

\def\mCone{\mathsf{Cone}}
\def\supp{\operatorname{supp}}
\def\length{\operatorname{length}}

\def\LL{\operatorname{L}}

\def\ldot{{\mathbf\cdot}\hspace{0.05em}}

\title[]{Derived noncommutative schemes, geometric realizations, and finite dimensional algebras}

\author[]{Dmitri Orlov}
\thanks{This work is supported by the Russian Science Foundation (RSF) under grant 14-50-00005}

\address{ Algebraic Geometry Department, Steklov Mathematical Institute of Russian Academy of Sciences,
Gubkin str. 8, Moscow 119991, RUSSIA}
\email{orlov@mi.ras.ru}

\dedicatory{Dedicated to the blessed memory of my friend Vladimir Voevodsky}

\date{}

\keywords{Differential graded categories, triangulated categories,  derived noncommutative schemes, finite dimensional algebras, geometric realizations}
\subjclass[2010]{14A22, 14F05, 16E45, 18E30}

\begin{document}
\begin{abstract}
The main purpose of this paper is to describe various phenomena and certain constructions arising in the process of studying derived noncommutative schemes.
Derived noncommutative schemes are defined as differential graded categories of a special type.
We review and discuss different properties of both noncommutative schemes and morphisms between them.
In addition, the concept of geometric realization for derived noncommutative scheme is introduced and
problems of existence and construction of such realizations are discussed.
We also study the construction of gluing noncommutative schemes via morphisms and consider some new phenomena, such as phantoms, quasi-phantoms, and Krull-Schmidt partners,
arising in the world of noncommutative schemes and allowing us to find new noncommutative schemes.
In the last sections we consider noncommutative schemes that are related to basic finite dimensional algebras.
It is proved that such noncommutative schemes have special geometric realizations under which the algebra goes to a vector bundle on a smooth projective scheme.
Such realizations are constructed in two steps, one of which is the well-known construction of Auslander, while the second step is connected with a new concept of a
well-formed quasi-hereditary algebra for which there are very particular geometric realizations sending standard modules to line bundles.
\end{abstract}

\maketitle

\section*{Introduction}

Noncommutative algebraic geometry is based on the fact that, as in commutative geometry affine schemes are directly related to rings or algebras over a field.
And in spite of the fact that in contrast to the commutative case for a non-commutative algebra $A$ we do not have any construction of the topological space $\Spec A,$ we can nevertheless freely speak about the category of quasi-coherent sheaves on an affine noncommutative scheme having in view the category of (right) modules $\Md A$ over the algebra $A.$
This observation  is a serious reason in order to shift the focus from varieties (or schemes) and to go directly to the categories of sheaves on these varieties (schemes), making these categories the main object of research.
This is in many ways natural also because the theory of sheaves is one of the most powerful methods of studying algebraic varieties.
In addition, it should be noted that quasi-coherent sheaves do not depend on a choice of topology on schemes, being sheaves in all natural topologies, and they best reflect the algebraic structure of schemes.

To the next question, which is already quite nontrivial, how to glue noncommutative affine schemes, there are several different approaches.
However, the most fruitful here is again the point of view related to the consideration of the category of sheaves, but with some natural generalizations dictated by homological algebra.
This approach consists in working in fact with the derived category of quasi-coherent sheaves and with the category of perfect complexes on a noncommutative scheme, when we talk about such a noncommutative scheme.
It is natural to begin a more detailed consideration by returning to commutative algebraic geometry.

Consider a scheme $X$ over a field $\kk $ and put on it some finiteness conditions.
We will assume that $X$ is quasi-compact and quasi-separated, i.e. it has a covering by affine schemes
whose intersections have the same property.
With any such scheme we associate the unbounded derived category of complexes of $\cO_X $\!--modules with quasi-coherent cohomology $\D_{\Qcoh}(X).$
In the papers \cite{Ne} and \cite{BVdB} it was shown that this category has enough compact objects and the triangulated subcategory of compact objects actually coincides with the category of perfect complexes $\prf X.$
It should be recalled that a complex is called perfect if it is locally quasi-isomorphic to a bounded complex of locally free sheaves of finite type.
In addition, it has also been shown in the papers \cite{Ne, BVdB} that the category of perfect complexes  $ \prf X $ can be generated by a single object, which is called a classic generator.
This means that the minimal full triangulated subcategory of $ \prf X$ containing this object and closed under taking direct summands, coincides with the whole category $ \prf X.$
Such an object will also be a compact generator for the category $ \D_{\Qcoh} ( X).$
Note that the article \cite{Ne} was dealing with separated schemes, for which the category
$ \D_{\Qcoh} (X) $ is equivalent to the usual unbounded derived category of quasi-coherent sheaves $ \D (\Qcoh X),$ while  in the paper
\cite{BVdB} these statements were proved in the general case of a quasi-compact and quasi-separated scheme.

The existence of such a generator
$\mE \in \prf X$ gives us an opportunity to look at the derived category  $ \D_{\Qcoh}(X),$ as well as  the triangulated category $\prf X ,$ from a different angle.
The results of the papers \cite {Ke, Ke2} allow us to assert that in this situation the category $ \D_{\Qcoh} (X) $ is equivalent to the unbounded derived  category of differential graded (DG) modules
$\D(\dR)$ over some differential graded (DG) algebra $\dR,$ and the triangulated category of perfect complexes $\prf X$ is equivalent to the category of perfect DG modules $\prf\dR.$
The differential graded algebra $\dR$ directly depends on the choice of the generator $\mE \in \prf X$ and is obtained as the  DG algebra of endomorphisms
$\dEnd (\mE)$ of the given generator, but not as an object of category  $\prf X,$ but as its lift to a differential graded category $\prfdg X,$ which is a natural enhancement of the category $\prf X.$
In particular, this means that the category $ \prf X $ is equivalent to the homotopy category $ \Ho(\prfdg X).$
A differential graded (DG) category $\dA$ is a category whose morphisms have the structure of complexes of $\kk$\!--vector spaces.
Passing from the complexes to their zero cohomology spaces we obtain a $\kk $\!--linear category $ \Ho (\dA) $ with the same objects,
which is called the homotopy category for DG category $ \dA.$ If there is an equivalence $ \epsilon: \Ho (\dA) \stackrel{\sim}{\to} \T,$ then $ (\dA, \epsilon) $
is called a DG enhancement for the category $\T.$

Usually triangulated categories have natural enhancements arising in the process of constructing these categories.
In our case, the triangulated category $ \D _ {\Qcoh} (X) $ has several natural enhancements: DG category of $h$\!--injective complexes, DG quotient of all complexes by
  acyclic complexes, DG quotient of $h$\!--flat complexes by acyclic $h$\!--flat complexes and so on.
All these enhancements are also in a natural way quasi-equivalent to each other, and when working with them,
  we can choose any convenient for us model from the class of quasi-equivalent DG categories.
A DG enhancement of the category $ \D_{\Qcoh} (X) $ induces a DG enhancement of the triangulated subcategory of perfect complexes $\prf X,$ which will be denoted by $\prfdg X.$

The results of the papers \cite{Ne, BVdB, Ke, Ke2} mentioned above, combined together, tell us that the DG category $\prfdg X $ is quasi-equivalent to a category of the form
$ \prfdg \dR, $  where $ \dR $ is the DG algebra of endomorphisms of some generator $ \mE \in \prfdg X.$
We can also note that in the case of quasi-compact and quasi-separated scheme, the DG algebra $ \dR $ is cohomologically bounded, i.e. it has only a finite number of nontrivial cohomology spaces.
Thus, the following definition of a derived noncommutative scheme over the field $ \kk $ arises (see. Definition \ref{noncommutative_scheme}).
By a derived noncommutative scheme $ \dX $ we shall call a $\kk$\!--linear DG category of the form  $ \prfdg \dR, $ where $ \dR $ is cohomologically bounded DG algebra over $\kk.$
It is natural to call the derived category $ \D(\dR) $ by the derived category of quasi-coherent sheaves on this noncommutative scheme,
  while the triangulated category $ \prf \dR $ will be called by the category of perfect complexes on $\dX.$
It was mentioned above that there is an equivalence of the triangulated categories $ \Ho (\prfdg \dR) \cong \prf \dR.$

Note also that, having considered the DG categories of the from  $ \prfdg \dR ,$ we not only were able to glue noncommutative scheme from affine pieces, but also in fact received derived noncommutative affine schemes
as well by passing from algebras to DG algebras.
Thus, this definition allows us to talk not only about noncommutative schemes, but also about derived noncommutative schemes together.

Many important properties of usual schemes can be extended to derived noncommutative schemes.
In particular, we can talk about smoothness, regularity and properness for noncommutative schemes.
We can also define morphisms between schemes as quasi-functors between DG categories.
Despite the fact that the usual morphisms between schemes are also morphisms between them in  noncommutative sense, it should be noted that there are much more morphisms in the noncommutative world
and they form a category (and even a DG category).
This implies that they can be added together and one can talk about maps between morphisms.
Many natural concepts from usual commutative algebraic geometry also generilize to noncommutative schemes: the concepts of compactification, resolution of singularities, Serre functor are defined.

In this paper we discuss some properties of derived noncommutative schemes and draw various analogies with the commutative case.
However, one of the important concepts, to which a large part of the article is devoted,  a gluing of noncommutative schemes  is an operation existing in  the noncommutative world and having no analogue in the commutative one. Another important concept is the geometric realization of derived noncommutative schemes.
It arises naturally for two reasons.
First, for each abstract algebraic structure it is always useful and interesting to find some geometric representations.
On the other hand, many noncommutative schemes come to us from usual geometry with a given geometric realization.

The most natural and often appearing but highly nontrivial examples of geometric realizations are related to admissible subcategories $ \N \subset \prf X $
in categories  of perfect complexes on smooth projective schemes $X.$ In this case, we obtain a noncommutative scheme as the DG category $ \dN \subset \prfdg X.$
It is easy to see that such a DG category can be realized in the form $ \prfdg \dR $ and, moreover, the noncommutative scheme $ \dN $ itself is smooth and proper.
The initial embedding $ \dN \subset \prfdg X $ is a particular but most interesting case of a geometric realization of smooth and proper noncommutative schemes (see Definition \ref{geom_real}).
Such realizations are called pure.

As mentioned above, for any two DG categories $ \dA $ and $ \dB $
 and a $ \dB^{\op}\hy\dA $\!--bimodule $ \mT$ we can define a new DG category $ \dC = \dA \underset{\mT} {\oright} \dB,$ which is called the gluing of these two DG categories via the bimodule $ \mT. $
This construction allows us to introduce gluing of derived noncommutative schemes $ \dX $ and $ \dY$ by imposing some boundedness condition on the gluing bimodule $ \mT. $
Noncommutative schemes of the form $ \dX \underset {\mT} {\oright} \dY $ adopt many properties of the schemes $\dX$ and $\dY$ under appropriate conditions on $ \mT.$
For example, the smoothness and properness of $ \dX $ and $ \dY $ imply the smoothness and properness of the gluing
   $ \dX \underset {\mT} {\oright} \dY $ assuming that the bimodule $ \mT $ is perfect.

In paper \cite{O_glue} we study the problem of geometric realizations of noncommutative schemes that are obtained by gluing smooth and proper noncommutative schemes $ \dX $ and $ \dY.$
In particular, it was proved that if noncommutative schemes $ \dX $ and $ \dY $ arise as admissible subcategories in categories of perfect complexes on smooth projective schemes, then their gluing $ \dX \underset {\mT} {\oright} \dY $ via a perfect bimodule $ \mT $ can be realized in the same way.
This problem is discussed in detail in sections \ref{glu_der_schem}--\ref{real_gluings}.

Derived noncommutative schemes may be quite different from commutative schemes in general even when they are smooth and proper.
In section \ref{phantoms}, a phenomenon such as quasi-phantoms and phantoms is discussed.
Without going into details, we can say that phantoms are such smooth and proper noncommutative schemes $ \dX, $ for which K-theory  $ K _ * (\dX) $ is completely trivial.
Moreover, we also assume that a phantom noncommutative scheme has a geometric realization in the form of an admissible subcategory in the category of perfect complexes on a smooth projective scheme.
In the paper \cite{GO} it was proved that  phantoms exist and the procedure of their construction, connected with the product of surfaces of the general type with $p_g = q = 0,$ was described.
In the paper \cite{BGKS} one of the phantoms was constructed as an admissible subcategory in the category of perfect complexes on the Barlow surface.
In the next section \ref{Kru_Sch} we discuss the so-called Krull--Schmidt partners, which were introduced in the paper \cite{O_Krull}.
These are smooth and proper noncommutative schemes $ \dX $ and $ \dX',$ for which there exists a smooth proper noncommutative scheme $\dY$ with the condition that some gluings
$ \dX \underset {\mT} {\oright} \dY $ and $ \dX'\underset {\mT'} {\oright} \dY $ are isomorphic to each other.
In particular, we give a new procedure for constructing smooth and proper noncommutative schemes that are Krull--Schmidt partners for usual schemes and have the same additive invariants.

The last two sections are devoted to the study of geometric realizations for finite dimensional algebras.
Any finite dimensional algebra $ \La $ gives a noncommutative scheme $\dV = \prfdg\La$ that is proper.
In the paper \cite{O_glue} it was proved that for any such noncommutative scheme $ \dV $ under the condition that the semisimple part $ \overline {\La} = \La/\rd $ is separable over the field $ \kk, $
one can find a geometric realization $ \prfdg \La \to \prfdg X, $ for which the scheme $X$ is smooth and projective.
There was also given an explicit construction of such a geometric realization.
On the other hand, in this case of special interest are such geometric realizations for which the image of the algebra $ \La $ is not an arbitrary perfect complex,  but some vector bundle on a scheme $ X.$
This problem can be reformulated as follows: for an arbitrary finite dimensional algebra $\La, $ find and construct a smooth projective scheme $X$ and a vector bundle $ \E $ on it such that
$ \End_X (\E) \cong \La $ and $ \Ext^j_{X} (\E, \E) = 0 $ for all $ j> 0.$
In the last section such construction is suggested for an arbitrary basic algebra, i.e. for an algebra $ \La, $ the semisimple part $ \overline {\La} = \La / \rd $ of which
is the product of the base field $ \underbrace {\kk \times \cdots \times \kk}_{m}.$
If the field $ \kk $ is algebraically closed, then this result implies a positive answer for any finite dimensional algebra.

The construction of a smooth projective scheme $ X $ and a vector bundle $ \E $ takes place in two steps and uses the theory of quasi-hereditary algebras.
In the section \ref{quasi_her_alg} a new concept of well-formed quasi-hereditary algebra is introduced, and for such algebras we construct  very special geometric realizations that
send standard modules to line bundles.
This construction is a generalization of the construction for quiver algebras described in the paper \cite{O_q}.
Applying this new procedure to the algebra $ \Ga$ that is obtained from a basic finite dimensional algebra $ \La $ by the Auslander construction,
we obtain  a geometric realization for the algebra $ \La $ such that $ \La $ goes to a vector bundle $ \E $ on a smooth projective scheme $ X.$
We also note that the scheme $ X $ is a tower of projective bundles, and the rank of the bundle $ \E $ is exactly equal to the dimension of the algebra $ \La.$

The author is very grateful to Anton Fonarev, Alexander Kuznetsov, and Amnon Neeman  for useful discussions and valuable comments.

\section{Preliminaries on triangulated and differential graded categories}

\subsection{Triangulated categories, generators, and semi-orthogonal decompositions}

Let $\T$ be a triangulated category. We say that a set of objects $S\subset\T$ {\sf classically generates} the triangulated category $\T$
if the smallest full triangulated subcategory of
$\T$ containing $S$ and  closed under taking direct summands coincides with the whole category $\T.$ If the set $S$ consists of a single object $E\in \T,$
then $E$ is called a {\sf classical generator} for $\T.$

A classical generator will be called {\sf strong} if it generates the whole triangulated category $\T$ in a finite number of steps (see, e.g. \cite{BVdB}).
To define it precisely,  we introduce a multiplication on the set of strictly full subcategories. Let $\I_1$ and $\I_2$ be two full subcategories of $\T.$
Denote by $\I_1*\I_2$ the full subcategory of $\T$
consisting of all objects $X$ such that there is a distinguished triangle $X_1\to X\to X_2$ with $X_i\in \I_i.$
For any subcategory $\I\subset\T$  denote by $\langle \I\rangle$ the smallest full subcategory of $\T$ containing $\I$ and closed under
taking finite direct sums, direct summands and shifts.
Now we can define a new multiplication on the set of strictly full subcategories closed under
finite direct sums. Put $\I_1 \diamond\I_2=\langle \I_1*\I_2\rangle$ and  define by induction
$\langle \I\rangle_k=\langle\I\rangle_{k-1}\diamond\langle \I\rangle_1,$ where $\langle \I\rangle_1=\langle \I\rangle.$
 If $\I$ consists of a single object $E,$ we denote $\langle \I\rangle$ by
 $\langle E\rangle_1$ and put by induction $\langle E\rangle _k=\langle E\rangle_{k-1}\diamond\langle E\rangle_1.$

\begin{definition}
An object $E$ will be called  a {\sf strong generator} if $\langle E\rangle_n=\T$ for some $n\in\NN.$
\end{definition}

Note that $E$ is a classical generator if and only if
$\mathop\bigcup\limits_{k\in\ZZ} \langle E\rangle_k=\T.$
It is evident that if a triangulated category $\T$ has a strong generator then any
classical generator of $\T$ is  strong too, i.e. the existence of a strong generator is a property of a triangulated category (see \cite{O_gen}).

\begin{definition}\label{reg}
A triangulated category $\T$ will be called  {\sf regular} if it has a strong generator.
\end{definition}

Following \cite{Ro}, we  introduce a notion of  dimension for a regular triangulated
category $\T$ as the smallest integer $d\ge 0$ such that there exists an object $E\in \T$
for which $\langle E\rangle_{d+1}=\T.$

Now we recall the notion of a compact object. An object $E$ of a triangulated category $\T$
is called {\sf compact} (in $\T$) if the functor $\Hom
_{\T}(E, -)$ commutes with arbitrary existing in $\T$ direct sums (coproducts), i.e. for each
family of objects $\{ X_i\}\subset \T,$ for which
$\bigoplus_i X_i$ exists, the canonical map
$\bigoplus_i\Hom (E, X_i){\to}
\Hom (E, \bigoplus_i X_i)$ is an isomorphism.
Compact objects form a full triangulated subcategory of $\T$ which is usually denoted as $\T^c\subset \T.$

Let $\T$ be a triangulated category that admits arbitrary (small) direct sums. A full triangulated subcategory $\L\subseteq\T$
 which is closed
under taking all direct sums is called a {\sf localizing subcategory}.
This means that the inclusion functor preserves direct sums. Note that $\L$ is also closed under taking direct summands (see \cite{Ne}).

A set $S \subset \T^c$ is
called a set of {\sf compact generators} if the smallest localizing subcategory containing the set $S$
coincides with the whole category $\T.$ This property is equivalent to
the following: for an object $X\in\T,$ we have $X\cong 0$ if $\Hom(Y, X[n])=0$ for all $Y\in S$ and  all $n\in \ZZ.$

Let $\T$ be a triangulated category with small Hom-sets, i.e Hom between any two objects should be a set.
Assume that $\T$ admits arbitrary direct sums and let
$\L\subset\T$ be a localizing triangulated subcategory.
We can consider the Verdier quotient $\T/\L$ with a natural localization map $\pi : \T\to \T/\L.$
It is known that the category $\T/\L$ also has arbitrary direct sums and, moreover,
the functor $\pi$ preserves direct sums (see \cite[3.2.11]{Ne_book}).

Notice however that Hom-sets in $\T/\L$ need not be small. Assume that the Verdier quotient $\T/\L$ is a
category with small Hom-sets. If the triangulated category $\T$ has a set of compact generators
then the Brown representability theorem holds for
$\T$ and the quotient functor $\pi : \T\to \T/\L$ has a right adjoint $\mu : \T/\L\to \T$ (see \cite[8.4.5]{Ne_book}).
This adjoint is
called the {\sf Bousfield localization} functor.

Let $j\colon\N\hookrightarrow\T$ be a full embedding of triangulated
categories.
The subcategory $\N$ is called {\sf right admissible}
(resp. {\sf left admissible}) if there is a right
(resp. left) adjoint $q\colon\T\to \N$ to the embedding functor $j\colon\N\hookrightarrow\T.$ The
subcategory $\N$ is called {\sf admissible} if it is both right
and left admissible.

The {\sf right orthogonal} (resp. {\sf left
orthogonal}) to the subcategory ${\N}\subset \T$ is the full
subcategory ${\N}^{\perp}\subset {\T}$ (resp. ${}^{\perp}{\N}$) consisting of all objects $M$
such that ${\Hom(N, M)}=0$ (resp. ${\Hom(M, N)}=0$)  for any $N\in{\N}.$
It is clear that the subcategories ${\N}^{\perp}$ and ${}^{\perp}{\N}$ are triangulated subcategories.

\begin{definition}\label{sd}
A {\sf semi-orthogonal decomposition} of a triangulated category
$\T$ is a sequence of full triangulated subcategories ${\N}_1,
\dots, {\N}_n$ in ${\T}$ such that there is an increasing filtration
$0=\T_0\subset\T_1\subset\cdots\subset\T_n=\T$ by left admissible
subcategories for which the left orthogonals ${}^{\perp}\T_{r-1}$ in
$\T_{r}$ coincide with $\N_r$ for all $1\le r\le n.$
 We write $ {\T}=\left\langle{\N}_1, \dots,
{\N}_n\right\rangle. $
\end{definition}

In some cases one can hope that $\T$ has a semi-orthogonal decomposition
${\T}=\left\langle{\N}_1, \dots, {\N}_n\right\rangle$ in which each
$\N_r$ is as simple as possible, i.e. each of them is equivalent to the bounded
derived category of finite-dimensional vector spaces.

From now we will assume that $\T$ is a $\kk$\!--linear triangulated category, where $\kk$ is an arbitrary base field.

\begin{definition}\label{exc}
An object $E$ of a $\kk$\!--linear triangulated category ${\T}$ is
called {\sf exceptional} if  ${\Hom}(E, E[m])=0$ whenever $m\ne 0,$
and ${\Hom}(E, E)\cong\kk.$ An {\sf exceptional collection} in ${\T}$ is
a sequence of exceptional objects $\sigma=(E_1,\dots, E_n)$
satisfying the semi-orthogonality condition ${\Hom}(E_i, E_j[m])=0$
for all $m$ whenever $i>j.$
\end{definition}

If a triangulated category $\T$ has an exceptional collection
$\sigma=(E_1,\dots, E_n)$ that generates the whole of $\T,$ then
this collection is called {\sf full}.  In this case $\T$ has a
semi-orthogonal decomposition with $\N_r=\langle E_r\rangle.$ Since
$E_{r}$ is exceptional, each of these categories is equivalent to
the bounded derived category of finite dimensional $\kk$\!--vector
spaces.  In this case we write $ \T=\langle E_1,\dots, E_n \rangle.$

Recall now the notion of a proper triangulated category.

\begin{definition}\label{prop}
We say that a $\kk$\!--linear triangulated category  $\T$ is {\sf proper} if the vector space
$\bigoplus_{m\in\ZZ}\Hom(X, Y[m])$ is finite-dimensional for any pair of objects $X, Y\in\T.$
\end{definition}

Proper and regular triangulated categories have good properties. In particular, they are saturated and admissible if they are idempotent complete.
Recall that $\T$ is idempotent complete if kernels of all projectors $p: X\to X,\; p^2=p$ exist as objects of $\T.$
The following theorem is due to A.~Bondal and M.~Van den Bergh.
\begin{theorem}{\rm \cite[Th. 1.3]{BVdB}}\label{saturated}
Let $\T$ be a regular and proper triangulated category that is idempotent complete (Karoubian).
Then every cohomological functor  from $\T^{\op}$ to the category of finite-dimensional
vector spaces is representable, i.e. it has the form $h^{Y}=\Hom(-, Y).$
\end{theorem}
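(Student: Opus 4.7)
The plan is to build the representing object $Y$ by a finite iterative cone construction starting from the strong generator supplied by regularity. By Definition~\ref{reg}, fix a strong generator $E\in\T$ with $\langle E\rangle_d=\T$ for some $d\in\NN$; properness ensures that the graded ring $A^*:=\Ext^*_\T(E,E)=\bigoplus_j\Hom(E,E[j])$ is finite-dimensional over $\kk$.

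First I would extract from $H$ the graded $A^*$-module $M:=\bigoplus_j H(E[j])$, which is finite-dimensional in each degree by hypothesis and, under the appropriate finite-type interpretation, is a finitely generated graded $A^*$-module. Pick a finite set of homogeneous generators; by Yoneda this data determines a finite direct sum $Y_0$ of shifts of $E$ together with a natural transformation $\eta_0\colon\Hom_\T(-,Y_0)\to H$ of cohomological functors on $\T$ which surjects when evaluated on every shift of $E$.

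Next, iterate. Set $K_0:=\ker\eta_0$; on shifts of $E$ it is again a finite-dimensional graded $A^*$-module. Choose homogeneous generators, realise them by a morphism $f\colon E'\to Y_0$, where $E'$ is another finite direct sum of shifts of $E$, and let $Y_1$ be the cone of $f$; the distinguished triangle induces a refined transformation $\eta_1\colon\Hom_\T(-,Y_1)\to H$. Repeat. The strong-generation bound $\langle E\rangle_d=\T$ is then used to show that after finitely many steps $\eta_N$ becomes an isomorphism on $E$ and all its shifts, and hence on all of $\T$ by strong generation. Finally, idempotent completeness allows one to extract $Y$ as a direct summand of $Y_N$ lying in $\T$.

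The main obstacle is proving termination: one must exhibit a numerical complexity invariant on the defect modules $K_k$ that strictly decreases under each cone construction and is bounded in terms of $d$ and the graded dimension of $M$. Properness provides finite-dimensionality at every intermediate step and bounds the range of degrees in which the relevant $\Hom$'s live; regularity (via the strong-generation bound $d$) limits the total number of iterations; and Karoubianness is invoked only at the final step to promote ``direct summand of a constructed object'' to ``object of $\T$''. The interplay of these three hypotheses at three distinct points of the argument is the substance of the Bondal--Van den Bergh proof.
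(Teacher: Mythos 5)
Your skeleton is the right one — this is exactly the architecture of the Bondal--Van den Bergh argument (the paper itself gives no proof, only the citation) — but the step you yourself flag as ``the main obstacle'' is where the proposal goes wrong, and the mechanism you propose for it would not work. There is no strictly decreasing numerical complexity invariant on the defect functors $K_i=\ker\eta_i$: their dimensions need not shrink, and $\eta_N$ need never become an isomorphism on shifts of $E$, let alone on all of $\T$. What the construction actually gives is weaker and of a different nature: since $Y_{i+1}$ is the cone of a map $E'\to Y_i$ chosen so that $h_{E'}$ covers $K_i$ on all shifts of $E$, the \emph{transition map} $K_i(E[j])\to K_{i+1}(E[j])$ is zero for every $j$ (the composite $h_{E'}\to h_{Y_i}\to h_{Y_{i+1}}$ vanishes). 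The correct continuation is then an induction on $k$, using the long exact sequences attached to triangles $X_1\to X\to X_2$ with $X_1\in\langle E\rangle_{k-1}$, $X_2\in\langle E\rangle_1$, showing that $\eta_i$ is surjective on $\langle E\rangle_k$ for $i\ge k$ and that the composite $K_i(C)\to K_{i+k}(C)$ vanishes for all $C\in\langle E\rangle_k$; since $\langle E\rangle_d=\T$, the pro-system of kernels is pro-zero on the whole category.

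Your last two sentences are also mutually inconsistent and reveal the gap: if $\eta_N$ were an isomorphism on all of $\T$, then $H\cong h_{Y_N}$ and Karoubianness would be superfluous. In the genuine argument $\eta_N$ only becomes a \emph{split} epimorphism of cohomological functors: evaluating the surjectivity and the pro-vanishing of kernels at the object $Y_N$ itself and chasing the identity morphism, one produces $v\colon Y_{N+d}\to Y_N$ exhibiting $H$ as a direct summand of the representable functor $h_{Y_{N+d}}$; the corresponding idempotent in $\End(Y_{N+d})$ splits precisely because $\T$ is idempotent complete, and the resulting summand represents $H$. So Karoubianness enters to split an idempotent of a constructed object, not to ``promote'' an already representable functor. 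As written, the proposal identifies the three hypotheses and where they enter, but does not prove the theorem: the termination mechanism is wrong and the splitting argument is absent. One further small point: for the very first step you need the finite-type hypothesis $\dim_{\kk}\bigoplus_j H(E[j])<\infty$ (as in \cite{BVdB}), not merely that each $H(C)$ is finite-dimensional; you note this parenthetically, but it should be made explicit since the whole construction of $Y_0$ depends on it.
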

A triangulated category satisfying such property of representability  is called {\sf right saturated} in \cite{BK, BVdB}.
It is proved \cite[2.6]{BK} that if a right saturated  triangulated category $\T$ is a full subcategory
in  a proper triangulated category, then it is right admissible there.
By Theorem \ref{saturated} a regular and proper
idempotent complete triangulated category is right saturated. Since the opposite category is also regular and proper,
it is left saturated as well. Thus, we obtain the following proposition.
\begin{proposition}\label{admissible}
Let $\N\subset\T$ be a full triangulated subcategory in  a proper triangulated category
$\T.$ Assume that $\N$ is regular and  idempotent complete.  Then $\N$ is admissible in $\T.$
\end{proposition}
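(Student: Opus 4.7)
The plan is to obtain admissibility of $\N$ in $\T$ by deducing right admissibility from the saturation theorem already stated in the excerpt and then using duality for left admissibility.

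First I would verify that $\N$ itself satisfies the hypotheses of Theorem \ref{saturated}. Properness is inherited from $\T$ immediately: for any $X,Y\in\N\subset\T$, the spaces $\Hom_{\N}(X,Y[m])=\Hom_{\T}(X,Y[m])$ are finite-dimensional, and $\bigoplus_m\Hom(X,Y[m])$ remains finite-dimensional. Regularity and idempotent completeness of $\N$ are given. Thus Theorem \ref{saturated} applies directly to $\N$, and every cohomological functor $\N^{\op}\to\operatorname{Vect}^{fd}_{\kk}$ is representable, i.e. $\N$ is right saturated in the sense of \cite{BK,BVdB}.

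Next I would apply the cited result \cite[2.6]{BK} which asserts precisely that a right saturated triangulated category sitting as a full subcategory of a proper triangulated category is automatically right admissible there. Concretely, for any $T\in\T$ the contravariant functor $\N^{\op}\to\operatorname{Vect}^{fd}_{\kk}$ given by $N\mapsto\Hom_{\T}(N,T)$ takes finite-dimensional values (by properness of $\T$), and is cohomological, so it is representable by some object $qT\in\N$; the assignment $T\mapsto qT$ gives the required right adjoint $q\colon\T\to\N$.

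For left admissibility I would invoke duality. The opposite category $\N^{\op}$ is again proper, idempotent complete, and regular (the notion of strong generator is self-dual: the relation $\langle E\rangle_n=\T$ is preserved under passing to the opposite, since a distinguished triangle $X_1\to X\to X_2$ in $\T$ becomes a distinguished triangle in $\T^{\op}$ up to rotation and shift). Moreover $\N^{\op}$ sits as a full triangulated subcategory of the proper category $\T^{\op}$. Applying the previous two steps to $\N^{\op}\subset\T^{\op}$ yields a right adjoint to this inclusion, which by dualising gives a left adjoint $p\colon\T\to\N$ to the original inclusion $\N\hookrightarrow\T$. Combining the two adjoints shows that $\N$ is admissible in $\T$, as claimed.

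The main (and essentially only) obstacle is a bookkeeping one: checking that all three hypotheses of Theorem \ref{saturated} really do pass to the opposite category. Properness and idempotent completeness are visibly self-dual; for regularity one just has to unfold the definition of $\langle E\rangle_n$ and observe that the operations of taking shifts, cones (equivalently, the $*$-product of subcategories) and direct summands all make sense and coincide in $\T$ and $\T^{\op}$, so a strong generator of $\N$ is automatically a strong generator of $\N^{\op}$.
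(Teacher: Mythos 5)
Your proposal is correct and follows essentially the same route as the paper: the text preceding the proposition derives it exactly by applying Theorem \ref{saturated} to $\N$ to get right saturation, invoking \cite[2.6]{BK} for right admissibility inside the proper category $\T,$ and then passing to the opposite category for left admissibility. Your extra check that regularity, properness, and idempotent completeness are self-dual is a reasonable elaboration of the paper's one-line remark that ``the opposite category is also regular and proper.''
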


Recall now the definition of a Serre functor \cite{BK, BO, BO2}.
Let $\T$ be a proper $\kk$\!--linear triangulated category. A $\kk$\!--linear autoequivalence  $S: \T\to\T$
is called a {\sf Serre functor} if there exists
an isomorphism of bifunctors
\begin{equation}\label{serre_funct}
\Hom_{\T}(Y,\; SX)\stackrel{\sim}{\lto}{\Hom}_{\T}(X, Y)^*,
\end{equation}
where $V^*$ is the dual vector space for a space $V.$
If such a functor exists it is exact and unique up to a natural isomorphism (see \cite{BK}).
In the paper \cite{BK} it was proved that any saturated triangulated category $\T$ has a Serre functor.
Taking into account Theorem \ref{saturated}, we obtain the following proposition.
\begin{proposition}\label{serre}
Let $\T$ be a regular and proper $\kk$\!--linear triangulated category that is idempotent complete.
Then it has a Serre functor.
\end{proposition}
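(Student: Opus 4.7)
The plan is to deduce the proposition by combining Theorem \ref{saturated} with the Bondal--Kapranov construction of Serre functors on saturated categories. The first thing I would do is check that the hypotheses of Theorem \ref{saturated} apply to both $\T$ and the opposite category $\T^{\op}$. Properness, idempotent completeness, and the existence of a strong generator are all self-dual properties (the definition of properness is symmetric in the two arguments, kernels of projectors in $\T^{\op}$ are cokernels in $\T$ which exist because idempotents split in $\T,$ and a strong generator of $\T$ remains one in $\T^{\op}$ since $\diamond$ is self-dual up to reversing arrows). Hence Theorem \ref{saturated} supplies representability of finite-dimensional cohomological functors on both $\T$ and $\T^{\op}.$

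Next I would construct $S$ object-wise. For each $X\in\T,$ the assignment $Y\mapsto \Hom_{\T}(X,Y)^{*}$ is a cohomological functor from $\T^{\op}$ to finite-dimensional vector spaces: finite dimensionality follows from properness, and contravariant cohomologicality follows from covariant cohomologicality of $\Hom_{\T}(X,-).$ By Theorem \ref{saturated}, it is representable by some object $SX\in\T$ together with a natural isomorphism $\Hom_{\T}(-, SX)\cong \Hom_{\T}(X,-)^{*}.$ By Yoneda, the assignment $X\mapsto SX$ promotes uniquely to a functor $S\colon\T\to\T,$ and the isomorphism in \eqref{serre_funct} becomes natural in both variables.

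The remaining task is to verify that $S$ is an autoequivalence and is exact. To see it is an equivalence I would apply Theorem \ref{saturated} to $\T^{\op}$ (which we already arranged is regular, proper, and idempotent complete): then for every $X\in\T$ the functor $Y\mapsto \Hom_{\T}(Y,X)^{*}$ is representable on $\T,$ producing an object $S^{-1}X$ with $\Hom_{\T}(S^{-1}X,-)\cong \Hom_{\T}(-,X)^{*}.$ Plugging one Serre-type isomorphism into the other yields $\Hom_{\T}(Y, S S^{-1}X)\cong \Hom_{\T}(Y,X)$ and $\Hom_{\T}(Y, S^{-1}SX)\cong \Hom_{\T}(Y,X)$ naturally in $Y,$ so Yoneda gives $S S^{-1}\cong \id\cong S^{-1}S.$ Exactness of $S$ is automatic: a distinguished triangle in $\T$ yields long exact sequences on both sides of \eqref{serre_funct}, and by uniqueness of the representing object the image of the triangle under $S$ must again be a distinguished triangle.

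The main obstacle is not conceptual but bookkeeping: one must check that the two representability arguments for $S$ and $S^{-1}$ are compatible, i.e.\ that the adjunction-style isomorphisms assemble into genuine natural transformations $\id\Rightarrow S^{-1}S$ and $S S^{-1}\Rightarrow \id$ rather than merely point-wise isomorphisms. Once the naturality in both variables is in place (guaranteed by the Yoneda-style extension of $S$ to a functor), uniqueness of a Serre functor up to canonical isomorphism \cite{BK} completes the argument, so essentially all the work has been pushed into Theorem \ref{saturated}.
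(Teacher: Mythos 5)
Your proof is correct and follows essentially the same route as the paper: the paper simply combines Theorem \ref{saturated} (applied to $\T$ and to $\T^{\op}$, which is likewise regular, proper, and idempotent complete) with the Bondal--Kapranov result that saturated categories admit Serre functors. You have merely unpacked the cited \cite{BK} construction (object-wise representability, Yoneda functoriality, the inverse via the dual category, and exactness), which the paper leaves to the reference.
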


\subsection{Differential graded categories}
In this section we recall some facts on differential graded (DG) categories.
The main references are \cite{Ke, Ke2, Dr, LO}.
Let $\kk$ be a field and let $\dC$ be a $\kk$\!--linear differential graded (DG) category.
This means that the morphism spaces
$\dHom_{\dC} (\mX, \mY)$
are complexes of $\kk$\!--vector spaces (DG $\kk$\!--modules) and for any $\mX, \mY, \mZ\in
\Ob\dC$ the composition $\dHom (\mY, \mZ)\otimes \dHom (\mX, \mY)\to
\dHom (\mX, \mZ)$ is a morphism of DG $\kk$\!--modules.

For any DG category $\dC$ we denote by $\Ho(\dC)$
its homotopy category. The homotopy category $\Ho(\dC)$ has the same objects as the DG category $\dC,$ and its
morphisms are defined by taking the $0$\!--th cohomology
$H^0(\dHom_{\dC} (\mX, \mY))$
of the complex $\dHom_{\dC} (\mX, \mY).$

As usual, a {\sf DG functor}
$\mF:\dA\to\dB$ is given by a map $\mF:\Ob(\dA)\to\Ob(\dB)$ and
by morphisms of DG $\kk$\!--modules
$$
\mF_{\mX, \mY}: \dHom_{\dA}(\mX, \mY) \lto \dHom_{\dB}(\mF \mX,\mF \mY),\quad \mX, \mY\in\Ob(\dA)
$$
compatible with the composition and the units.

A DG functor $\mF: \dA\to\dB$ is called a {\sf quasi-equivalence} if
$\mF_{\mX, \mY}$ is a quasi-isomorphism for all pairs of objects $\mX, \mY$ of $\dA,$
and the induced functor $H^0(\mF): \Ho(\dA)\to \Ho(\dB)$ is an
equivalence. DG categories $\dA$ and $\dB$ are called {\sf quasi-equivalent} if there exist DG
categories $\dC_1,\dots, \dC_n$ and a chain of quasi-equivalences
$\dA\stackrel{\sim}{\leftarrow} \dC_1 \stackrel{\sim}{\rightarrow} \cdots \stackrel{\sim}{\leftarrow} \dC_n
\stackrel{\sim}{\rightarrow} \dB.$ Actually, for any DG category $\dA$
we can find a (quasi-equivalent) cofibrant replacement $\dA_{cof}\stackrel{\sim}{\rightarrow}\dA$ such that any chain of quasi-equivalences
between $\dA$ and $\dB$ can be realized by a simple roof $\dA\stackrel{\sim}{\leftarrow} \dA_{cof} \stackrel{\sim}{\rightarrow}\dB$ (see \cite{Ke2, Ta}).

Let $\dA$ be a small $\kk$\!--linear DG category.
A {\sf (right) DG $\dA$\!--module} is a DG functor
$\mM: \dA^{\op}\to \Mod \kk,$ where $\Mod \kk$ is the DG category of complexes of $\kk$\!--vector spaces and
$\dA^{\op}$ is the opposite DG category.
We denote by $\Mod \dA$  the DG
category of right DG $\dA$\!--modules. Let $\Ac\dA$ be the full
DG subcategory of $\Mod \dA$ consisting of all acyclic DG modules, i.e. DG modules $\mM$
for which the complex of vector spaces $\mM(\mX)$ has trivial cohomology for all $\mX\in\dA.$
The
homotopy category $\Ho(\Mod\dA)$ has a natural structure of triangulated category
and $\Ho (\Ac\dA)$ forms a localizing triangulated subcategory in it.

\begin{definition}
The {\sf derived
category} $\D(\dA)$ (of DG $\dA$\!--modules)  is defined as the Verdier quotient
$$
\D(\dA):=\Ho(\Mod\dA)/\Ho (\Ac\dA).
$$
\end{definition}

Any object $\mY\in\dA$ defines a representable right DG module
$
\mh^\mY_{\dA}(-):=\dHom_{\dA}(-, \mY).
$
This
gives the Yoneda DG functor
$\mh^\bullet :\dA \to
\Mod\dA$ that is a full embedding.
A DG module is called {\sf free} if it is isomorphic to a direct sum of  DG modules of the form
$\mh^\mY[m].$
A DG module
$\mP$ is called {\sf semi-free} if it has a filtration
$0=\mPhi_0\subset \mPhi_1\subset ...=\mP$
with free quotients  $\mPhi_{i+1}/\mPhi_i.$
The full
DG subcategory of semi-free DG modules is denoted by $\SF\dA.$

It is also natural to consider the category of h-projective DG modules.
A  DG $\dA$\!--module $\mP$ is called {\sf h-projective (homotopically projective)} if
$$\dHom_{\Ho(\Mod\dA)}(\mP, \mN)=0$$
for every acyclic DG module $\mN.$ (Dually, we can define {\sf h-injective} DG modules.)
Let  $\dP(\dA)\subset \Mod\dA$ denote the full
DG subcategory of h-projective objects. It can be easily checked that a semi-free
DG-module is h-projective and the natural embedding $\SF\dA\hookrightarrow\dP(\dA)$
is a quasi-equivalence.
Moreover, for any DG
$\dA$\!--module $\mM$ there is a quasi-isomorphism $\bp \mM\to \mM$ such that $\bp \mM$ is a semi-free
DG $\dA$\!--module (see \cite{Ke, Hi, Dr}). Hence,
the canonical DG functors $\SF\dA\hookrightarrow\dP(\dA)\hookrightarrow\Mod\dA$ induce equivalences
$\Ho(\SF\dA)\stackrel{\sim}{\to} \Ho(\dP(\dA))\stackrel{\sim}{\to} \D(\dA)$ of triangulated categories.
Dually, it can be shown that for any DG
$\dA$\!--module $\mM$ there is a quasi-isomorphism $\mM\to \bi \mM$ such that $\bi \mM$ is h-injective
(see \cite{Ke}).

We denote by $\SFf\dA\subset \SF\dA$ the full DG subcategory of finitely generated semi-free
DG modules, i.e. $\mPhi_n=\mP$ for some $n$ and $\mPhi_{i+1}/\mPhi_i$ is a finite direct sum of DG modules of the form
$\mh^Y[m]$ for any $i.$

\begin{definition} The DG category of {\sf perfect DG modules} $\prfdg\dA$
is the full DG subcategory of $\SF\dA$ consisting of all DG modules that are isomorphic  to  direct summands of objects from $\SFf\dA$
in the homotopy category $\Ho(\SF\dA).$
\end{definition}

Denote by $\prf\dA$ the homotopy category $\Ho(\prfdg\dA).$ It is triangulated
and it is equivalent to the subcategory of compact objects $\D(\dA)^c\subset \D(\dA)$ (see \cite{Ne, Ke2}).

To any DG category $\dA$ we can associate a DG category $\dA^{\ptr}$ that is called the {\sf pretriangulated hull}
and a canonical fully faithful DG
functor $\dA\hookrightarrow\dA^{\ptr}$ (see \cite{BK2, Ke2}).
The idea of the definition of $\dA^{\ptr}$ is to add to $\dA$
all shifts, all cones, cones of morphisms between cones and etc.
There is a canonical fully faithful DG functor
(the Yoneda embedding) $\dA^{\ptr}\to \Mod\dA,$ and under this embedding
$\dA^{\ptr}$ is equivalent to the DG category of finitely generated semi-free DG modules $\SFf\dA.$
If $\dA$ is small, then the pretriangulated hull $\dA^{\ptr}$ is also small, and in some sense it is a small version for the essentially small DG category $\SFf\dA.$

A DG category $\dA$ is called {\sf pretriangulated}  if the canonical DG functor $\dA\to\dA^{\ptr}$
is a quasi-equivalence.
This property is equivalent to requiring that the homotopy category $\Ho(\dA)$ is triangulated
as a subcategory of $\Ho(\Mod\dA).$
The DG category $\dA^{\ptr}$ is always  pretriangulated,
so $\Ho(\dA^{\ptr})$ is a triangulated category.

If $\dA$ is pretriangulated and $\Ho(\dA)$ is idempotent complete, then
the Yoneda functor $\mh^{\bullet}: \dA\to\prfdg\dA$ is a quasi-equivalence and, hence, the induced exact functor
$h: \Ho(\dA)\to \prf\dA$ is an equivalence of triangulated categories.

\begin{definition} Let $\T$ be a triangulated category. An {\sf
enhancement} of $\T$ is a pair $(\dA , \varepsilon),$ where $\dA$ is a
pretriangulated DG category and $\varepsilon:\Ho(\dA)\stackrel{\sim}{\to} \T$ is an equivalence of triangulated categories.
\end{definition}
Thus, the DG category $\SF\dA$ of semi-free DG modules is an enhancement of the derived category $\D(\dA)$ while the DG category $\prfdg\dA$ of perfect DG modules
is an enhancement of the triangulated category $\prf\dA.$

\subsection{Differential graded functors and quasi-functors}

Let $\mF:\dA \to \dB$ be a DG functor between small DG categories.
It induces the restriction DG functor
$
\mF_*:\Mod\dB\lto \Mod\dA
$
which sends a DG $\dB$\!--module $\mN$ to $\mN\ldot\mF.$

The restriction functor $\mF_*$ has left and right adjoint functors $\mF^*, \mF^{!}$ that are defined as follows:
\[
\mF^*\mM(\mY)=\mM\otimes_{\dA} \mF_* \mh_{\mY},\quad \mF^{!}\mM(\mY)=\dHom_{\Mod\dA}(\mF_*\mh^{\mY}, \mM),
\]
where $\mY\in \dB,\; \mM\in\Mod\dA,$ and $\mh^\mY(-):=\dHom_{\dB}(-, \mY)$ is a right DG $\dB$\!--module, while
$\mh_\mY(-):=\dHom_{\dB}(\mY, -)$ is a left DG $\dB$\!--module.
The DG functor $\mF^*$ is called the induction functor and it is an extension of $\mF$ on the category of DG modules, i.e there is an isomorphism of DG functors
$\mF^* \ldot\mh^\bullet_{\dA}\cong \mh^\bullet_{\dB}\ldot\mF.$

The DG functor $\mF_*$ preserves acyclic DG modules and induces a derived functor $F_*: \D(\dB)\to \D(\dA).$
By adjunctions, the DG functors $\mF^*, \mF^{!}$ preserve h-projective and h-injective DG modules, respectively.
Existence of h-projective and h-injective resolutions allows us to define derived functors
$\bL F^*$ and $\bR F^!$ from $\D(\dA)$ to $\D(\dB).$ For example, the derived functor $\bL F^*: \D(\dA)\to \D(\dB)$ is the induced homotopy functor
$\H^0(\mF^*)$ for the extension DG functor $\mF^*: \SF\dA\to\SF\dB.$

More generally, let $\mT$ be an
$\dA\hy\dB$\!--bimodule, that is (by definition) a DG-module over $\dA^{op}\otimes\dB.$
For each DG $\dA$\!--module $\mM$ we obtain a DG $\dB$\!--module
$\mM\otimes_{\dA} \mT.$
The DG functor $(-)\otimes_{\dA} \mT: \Mod\dA \to \Mod\dB$ admits a right adjoint
$\dHom_{\dB} (\mT, -).$
These functors do not respect
quasi-isomorphisms in general, but applying them to h-projective (h-injective) DG modules
we obtain an adjoint pair of  derived functors
$(-)\stackrel{\bL}{\otimes}_{\dA}\mT$ and
$\bR \Hom_{\dB} (\mT, -)$
between the derived categories $\D(\dA)$ and
$\D(\dB)$ (see \cite{Ke, Ke2}).

It is evident that the categories $\D(\dA)$ and $\prf\dA$ are invariant under quasi-equivalences of DG categories.
Moreover, if a DG functor $\mF: \dA\to\dB$ is a quasi-equivalence, then the functors
$\mF^{*}:\prfdg\dA\lto \prfdg\dB$ and  $\mF^*:\SF\dA\lto \SF\dB$
are quasi-equivalences too.

Furthermore, we have the
following proposition  that is essentially equivalent to
Lemma 4.2 in \cite{Ke} (see also  \cite[Prop. 1.15]{LO}).

\begin{proposition}\cite{Ke}\label{Keller2}
Let $\mF: \dA\hookrightarrow \dB$ be a full embedding of DG categories
and let $\mF^*:\prfdg\dA\to\prfdg\dB$ (resp. $\mF^{*}:\SF\dA\to\SF\dB$) be the extension DG functor. Then the derived
functor $\bL F^*: \prf\dA\to \prf\dB$ (resp. $\bL F^*: \D(\dA)\to \D(\dB)$) is fully faithful.

If, in addition, the category $\prf\dB$ is classically generated by $\Ob\dA,$ then
$\bL F^*$ is an equivalence and the DG functor $\mF^*:\prfdg\dA\to\prfdg\dB$ (resp. $\mF^{*}:\SF\dA\to\SF\dB$) is a quasi-equivalence.
\end{proposition}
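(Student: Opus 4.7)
The strategy is to prove fully faithfulness first on representable DG modules, where it is essentially tautological because $\mF$ is a full embedding, and then to propagate the statement to all of $\D(\dA)$ (respectively $\prf\dA$) by devissage. For any $\mX\in\Ob\dA$ the extension DG functor satisfies $\mF^{*}\mh^{\mX}_{\dA}\cong\mh^{\mF\mX}_{\dB}$ (the isomorphism $\mF^{*}\ldot\mh^{\bullet}_{\dA}\cong\mh^{\bullet}_{\dB}\ldot\mF$ recalled just before the proposition), and the restriction $F_{*}$ carries $\mh^{\mF\mX}_{\dB}$ to the DG $\dA$-module $\mY\mapsto\dHom_{\dB}(\mF\mY,\mF\mX)=\dHom_{\dA}(\mY,\mX)=\mh^{\mX}_{\dA}(\mY)$, where the middle equality uses full faithfulness of $\mF$. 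Hence the unit of the adjunction $\bL F^{*}\dashv F_{*}$ is an isomorphism on every representable.

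For fully faithfulness of $\bL F^{*}\colon\D(\dA)\to\D(\dB)$, I would consider the full subcategory $\T\subseteq\D(\dA)$ consisting of objects on which the unit is an isomorphism. Since $F_{*}$ acts by pointwise restriction it commutes with direct sums and triangles, and $\bL F^{*}$ does so as a left adjoint, making $\T$ a localizing triangulated subcategory. It contains all representables by the previous paragraph, and these form a set of compact generators for $\D(\dA)$, so $\T=\D(\dA)$. The statement for perfect modules then follows by restriction: $\bL F^{*}$ preserves compactness because it sends the compact generators $\mh^{\mX}_{\dA}$ of $\D(\dA)$ into $\prf\dB=\D(\dB)^{c}$, and full faithfulness on $\D(\dA)$ restricts to the full triangulated subcategory $\prf\dA$.

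For the second assertion, under the classical generation hypothesis the essential image of $\bL F^{*}\colon\prf\dA\to\prf\dB$ is a thick subcategory (being the image of a fully faithful exact functor between idempotent complete triangulated categories) that contains $\mh^{\mF\mX}_{\dB}$ for all $\mX\in\Ob\dA$, and so equals $\prf\dB$; the unbounded case follows because classical generation of $\prf\dB$ implies compact generation of $\D(\dB)$, and the essential image of $\bL F^{*}$ in $\D(\dB)$ is then a localizing subcategory containing these compact generators. To upgrade the derived equivalence to a quasi-equivalence of DG categories, I would invoke that every semi-free DG module is $h$-projective, so the DG hom-complex $\dHom(\mP,\mQ)$ already computes $\bR\Hom_{\D(\dA)}(\mP,\mQ)$; the map induced by $\mF^{*}$ on hom-complexes then realises the derived comparison map and is a quasi-isomorphism by the already-proved full faithfulness. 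Combined with the essential surjectivity of $H^{0}(\mF^{*})$ given by the triangulated equivalence, this yields the quasi-equivalence. The most delicate step---where genuine care is needed---is precisely this last identification of the explicit DG-level map on hom-complexes with the abstract derived comparison; it rests on the manifest compatibility of the induction DG functor with the Yoneda embedding and on the fact that no projective replacement is required inside $\SF\dA$.
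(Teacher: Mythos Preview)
The paper does not supply its own proof of this proposition: it is stated with a citation to \cite{Ke} (Lemma~4.2) and \cite[Prop.~1.15]{LO}, and no argument is given in the text. Your proposal is correct and follows exactly the standard route one finds in those references---verifying that the adjunction unit is an isomorphism on representables, propagating by a localizing-subcategory argument, and then lifting to a DG quasi-equivalence using that semi-free modules are $h$-projective---so there is nothing to contrast.
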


\begin{remark}\label{rem_quasi}{\rm
Applying this proposition to the case $\dB=\prfdg\dA$ we obtain  quasi-equivalences $\dB=\prfdg\dA\stackrel{\sim}{\to}\prfdg\dB$ and
$\SF\dA\stackrel{\sim}{\to}\SF\dB$ that induce an equivalence between the derived categories $\D(\dA)$ and $\D(\prfdg\dA).$
}
\end{remark}

On the other hand, we can also consider the restriction DG functor
$\mF_*:\Mod\dB\lto \Mod\dA$ and  the induced derived functor $F_*: \D(\dB)\to \D(\dA).$
 The functor $F_*$ is right adjoint to the derived functor $\bL F^*: \D(\dA)\to \D(\dB).$
The composition of DG functor $\mF_*$ with the Yoneda DG functor $\mh^\bullet_{\dB}$ gives a DG functor
$\dB\to \Mod\dA$ and a homotopy functor $\Ho(\dB)\to\D(\dA).$ As result we obtain the following proposition,
a proof of which can be found in \cite{LO}.

\begin{proposition}\label{Kell1}
Let $\mF: \dA\hookrightarrow \dB$ be a full embedding of DG categories. Assume that $\Ob\dA$ forms a set
of compact generators of $\D(\dB).$ Then the derived functor $F_{*}: \D(\dB)\to \D(\dA)$ is an equivalence.
If, in addition, $\dB$ is pretriangulated and the homotopy category $\Ho(\dB)$ is idempotent complete, then the derived functor $F_{*}$ induces an equivalence between $\Ho(\dB)$ and the triangulated category
$\prf\dA,$ and the DG categories $\dB$ and $\prfdg\dA$ are quasi-equivalent.
\end{proposition}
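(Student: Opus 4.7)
The plan is to first establish that the left adjoint $\bL F^{*}\colon\D(\dA)\to\D(\dB)$ is an equivalence, and then to read off the statements about $F_{*}$ and about the DG enhancements as standard consequences.

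For the equivalence of derived categories, the first part of Proposition \ref{Keller2} already gives that $\bL F^{*}$ is fully faithful. Being a left adjoint (to $F_{*}$), it preserves arbitrary coproducts; being triangulated, its essential image is therefore a localizing triangulated subcategory of $\D(\dB)$. This essential image contains $\bL F^{*}(\mh^{X}_{\dA})\cong\mh^{X}_{\dB}$ for every $X\in\Ob\dA$, since the representable $\mh^{X}_{\dA}$ is free (in particular semi-free) so no derivation is needed. The hypothesis that $\Ob\dA$ is a set of compact generators of $\D(\dB)$ says exactly that the smallest localizing subcategory containing the $\mh^{X}_{\dB}$ is all of $\D(\dB)$. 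Hence $\bL F^{*}$ is essentially surjective and therefore an equivalence, and its right adjoint $F_{*}$ is then a quasi-inverse equivalence.

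For the second assertion I would combine two ingredients. Neeman's theorem from \cite{Ne} tells us that in a compactly generated triangulated category a set of compact generators classically generates the subcategory of compact objects; applied to $\D(\dB)$ this says that $\Ob\dA$ classically generates $\prf\dB=\D(\dB)^{c}$. That is precisely the hypothesis of the second part of Proposition \ref{Keller2}, which therefore yields that the extension DG functor $\mF^{*}\colon\prfdg\dA\to\prfdg\dB$ is a quasi-equivalence. On the other side, the hypothesis that $\dB$ is pretriangulated with $\Ho(\dB)$ idempotent complete makes the Yoneda DG functor $\mh^{\bullet}_{\dB}\colon\dB\to\prfdg\dB$ a quasi-equivalence, as recalled in the preliminaries. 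Chaining these produces the claimed quasi-equivalence between $\dB$ and $\prfdg\dA$. The induced equivalence $\Ho(\dB)\simeq\prf\dA$ is then identified with the restriction of $F_{*}$, because under the equivalence $F_{*}\colon\D(\dB)\xrightarrow{\sim}\D(\dA)$ compact objects correspond, the compacts in $\D(\dA)$ are $\prf\dA$, and the Yoneda image of $\dB$ in $\D(\dB)$ coincides with $\prf\dB=\D(\dB)^{c}$ under the pretriangulated plus idempotent complete hypothesis.

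The only genuine obstacle is promoting the derived equivalence $F_{*}$ to a quasi-equivalence at the DG level: the naive composition $\dB\xrightarrow{\mh^{\bullet}_{\dB}}\Mod\dB\xrightarrow{\mF_{*}}\Mod\dA$ sends $Y$ to the DG module $\dHom_{\dB}(\mF(-),Y)$, which lies in $\prfdg\dA$ only up to quasi-isomorphism. Handling this directly would be unwieldy, but it is exactly the content of the second part of Proposition \ref{Keller2}, so the entire argument reduces to verifying the classical-generation hypothesis of that proposition, which is the role of Neeman's theorem on compact generators.
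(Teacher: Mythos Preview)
Your argument is correct. The paper does not give its own proof of this proposition; it simply states that ``a proof of which can be found in \cite{LO}'', so there is nothing in the paper to compare against beyond the surrounding tools (Proposition~\ref{Keller2} and the Yoneda quasi-equivalence for pretriangulated idempotent complete DG categories), which you invoke exactly as intended.

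One small simplification: you do not need to appeal to Neeman's theorem to verify the hypothesis of the second part of Proposition~\ref{Keller2}. Once you have established that $\bL F^{*}:\D(\dA)\to\D(\dB)$ is an equivalence, it restricts to an equivalence $\prf\dA\xrightarrow{\sim}\prf\dB$ on compact objects; since $\prf\dA$ is by definition classically generated by the representables $\mh^{X}_{\dA}$, their images $\mh^{\mF(X)}_{\dB}$ classically generate $\prf\dB$. This avoids importing an external result and keeps the argument entirely internal to the paper's preliminaries.
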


Let $\dA$ and $\dB$ be two small DG categories. Since we consider DG categories up to quasi-equivalence, it is natural to consider  morphisms from $\dA$ to $\dB$
as roofs $\dA\stackrel{\sim}{\leftarrow} \dA_{cof}{\to}\dB,$ where $\dA\stackrel{\sim}{\leftarrow} \dA_{cof}$ is a cofibrant replacement (see, e.g. \cite{Ke2}).
These sets of morphisms are much better described in term of quasi-functors.

\begin{definition}
A DG $\dA\hy\dB$\!--bimodule $\mT$ is called a {\sf quasi-functor}
from $\dA$ to $\dB$ if the tensor functor
$
(-)\stackrel{\bL}{\otimes}_\dA \mT: \D(\dA) \to \D(\dB)
$
takes every representable DG $\dA$\!--module to an object which is
isomorphic in $\D(\dB)$ to a representable $\dB$\!--module.
\end{definition}

In other words, a quasi-functor is represented by a DG functor $\dA\to \Mod\dB$ whose essential image consists of
quasi-representable DG $\dB$\!--modules, where ``quasi-representable'' means quasi-isomorphic to a representable DG module.
Since the category of quasi-representable DG
$\dB$\!--modules is equivalent to $\Ho(\dB),$ a quasi-functor
$\mT$ defines a functor $\Ho(\mT):\Ho(\dA)\to \Ho(\dB).$
By the same reasons the restriction of the tensor functor $(-)\stackrel{\bL}{\otimes}_\dA \mT$ on the
category of perfect modules $\prf\dA$ induces a functor $\prf\dA\to \prf\dB.$
In particular case, when $\dA$ is a DG algebra, the quasi-functor $\mT$ considered as DG $\dB$\!--module is quasi-isomorphic to a representable
DG $\dB$\!--module and, hence, it is perfect $\dB$\!--module.

Denote by  $\Rep(\dA,\; \dB)$ the full subcategory of the derived
category $\D(\dA^{op}\otimes\dB)$ of $\dA\hy\dB$\!--bimodules consisting of
all quasi-functors.

It is known  that the morphisms from $\dA$ to $\dB$
in the localization of the category of all small DG $\kk$\!-–linear
categories with respect to the
quasi-equivalences are in a natural bijection with the isomorphism
classes of $\Rep(\dA,\dB)$ (see \cite{To}).
Due to this theorem, any morphism of the form $\dA\stackrel{\sim}{\leftarrow} \dA_{cof}\to\dB$
will be called a quasi-functor.

Let $\mF: \dA\to\dB$ be a quasi-functor. It can be realized as
a roof  $\dA\stackrel{\ma}{\longleftarrow}\dA_{cof}\stackrel{\mF'}{\lto}\dB,$
where $\ma$  and $\mF'$ are DG functors, and $\ma$ is a quasi-equivalence.
The quasi-functor $\mF$ induces functors
\begin{equation}\label{derived_quasi}
\bL F^*= \bL F^{'*}\ldot a_*: \D(\dA)\lto \D(\dB)
\quad
\text{and}
\quad
\bR F_*:= \bR a^{!}\ldot F'_*: \D(\dB)\lto \D(\dA).
\end{equation}

Since $\ma$ is a quasi-equivalence, the functor $a_*:\D(\dA)\to \D(\dA_{cof})$ is an equivalence, and the functor
$\bR a^{!}$ is quasi-inverse to $a_*.$ Hence, the right adjoint functor $\bR a^!$ is isomorphic to the left adjoint $\bL a^{*},$
and $\bR F_*\cong \bL a^{*}\ldot F'_*.$ Thus, we conclude that the functor $\bR F_*$  has a right adjoint functor
\[
\bR F^!=\bR F^{'!}\ldot a_*: \D(\dA)\lto \D(\dB)
\]
and, hence, commutes with all direct sums.

The inverse image functor $\bL F^*: \D(\dA)\to \D(\dB)$ induces the functor $\bL F^*: \prf\dA\to \prf\dB$ between subcategories of perfect modules, while
the functors $\bR F_*, \bR F^!$ do not necessarily send perfect modules to perfect modules.

If now we consider the quasi-functor $\mF$ as a DG $\dA\hy\dB$\!--bimodule $\mT,$ then
there are isomorphisms of functors
\[
\bL F^*\cong (-)\stackrel{\bL}{\otimes}_\dA \mT: \D(\dA) \lto \D(\dB)
\quad
\text{and}
\quad
\bR F_*\cong \bR\Hom_{\dB}(\mT, -): \D(\dB) \lto \D(\dA).
\]

Actually, quasi-functors from $\dA$ to $\dB$ form a DG category $\dRep(\dA, \dB)$ that can be defined as the full DG subcategory of the DG category of semi-free DG $\dA\hy\dB$\!--bimodules $\SF(\dA^{\op}\otimes\dB)$
consisting of all the objects of $\Rep(\dA, \dB).$ It follows from the definition that $\Ho(\dRep(\dA, \dB))\cong \Rep(\dA, \dB).$
The DG category of all quasi-functors $\dRep(\dA, \dB)$ can be also described as $\dR\lHom(\dA,  \dB),$ where
$\dR\lHom$ is an internal Hom-functor in the localization of the category of all small DG $\kk$\!-–linear
categories with respect to
quasi-equivalences (see \cite{To}).

\section{Derived noncommutative schemes, their properties, and geometric realizations}

\subsection{Derived noncommutative schemes and their properties}

Let $X$ be a quasi-compact and quasi-separated scheme over a field $\kk.$
Let $\Qcoh X$ denote the abelian category of quasi-coherent sheaves on $X.$
To any such scheme one can associate the derived category of $\cO_X$\!--modules with quasi-coherent cohomology
$\D_{\Qcoh}(X).$ It  admits arbitrary
direct sums. It is also known and proved in \cite{Ne, BVdB} that the subcategory of compact objects in this category coincides with
the subcategory of perfect complexes $\prf X$. Recall that a complex of $\cO_X$\!--modules on a scheme is called {\sf perfect} if it is
locally quasi-isomorphic to a bounded complex of locally free sheaves of finite type.

In \cite{Ne, BVdB} it was proved that the category $\prf X$ admits a classical generator.
Let us take such a generator $\mR$ as an object of the DG category $\prfdg X.$ Denote by $\dR$ its DG algebra of endomorphisms,
i.e. $\dR=\dHom(\mR, \mR).$
Proposition \ref{Keller2} implies that the DG category $\prfdg X$ is quasi-equivalent to
$\prfdg \dR.$ In this case, as a corollary,  we also obtain an equivalence between derived categories $\D(\dR)\stackrel{\sim}{\to}\D_{\Qcoh}(X)$ and the
triangulated categories $\prf\dR\to\prf X.$  Since $\mR$ is a perfect complex, the DG algebra $\dR$ has only finitely many nontrivial cohomology groups.
This fact allows us to suggest a definition of a derived noncommutative scheme over $\kk.$

\begin{definition}\label{noncommutative_scheme}
A {\sf derived noncommutative scheme} $\dX$ over a field $\kk$ is a $\kk$\!--linear DG category of the form $\prfdg\dR,$
where $\dR$ is a cohomologically  bounded DG algebra over $\kk.$ The derived category
$\D(\dR)$ is called the derived category of quasi-coherent sheaves on this noncommutative scheme while the triangulated category $\prf\dR$
is called the category of perfect complexes on it.
\end{definition}
Henceforth, for shortness, we will sometimes omit the adjective ``derived'' sometimes and we will refer to such object ``noncommutative scheme''.

For any noncommutative scheme $\dX$ we have the opposite noncommutative scheme $\dX^{\op}$ that is the DG category $\prfdg\dR^{\op},$
where $\dR^{\op}$ is the opposite DG algebra. We can also define the tensor product $\dX\otimes_{\kk}\dY$ of noncommutative schemes
$\dX=\prfdg\dR$ and
$\dX=\prfdg\dS$ as the derived noncommutative scheme $\prfdg(\dR\otimes_{\kk}\dS).$

Let us consider and discuss some natural properties of noncommutative schemes.
\begin{definition}\label{prop_def}
A noncommutative scheme $\dX=\prfdg\dR$ will be called {\sf proper} if the triangulated category $\prf\dR$ is proper, i.e. the
$\kk$\!--vector spaces
$\bigoplus_{p\in\ZZ}H^p(\dHom_{\prfdg\dR}(\mM, \mN))$ are finite-dimensional for any two perfect DG modules $\mM, \mN\in\prfdg\dR.$
\end{definition}

This property can be described in terms of the DG algebra $\dR.$ It can be checked that the noncommutative scheme $\prfdg\dR$ is proper if and only if
the cohomology algebra $\bigoplus_{p\in\ZZ}H^p(\dR)$ is finite dimensional.
It can be shown that Definition \ref{prop_def} is consistent with the usual concept of a proper scheme.

\begin{proposition}\label{prop_scheme}\cite[Prop 3.30]{O_glue} Let $X$ be a separated scheme of finite type
over a field $\kk.$ Then $X$ is proper if and only if the category
of perfect complexes $\prf X$ is proper.
\end{proposition}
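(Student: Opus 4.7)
For the forward implication, I would use the standard identification of graded Hom spaces between perfect complexes with hypercohomology of their internal derived Hom. Namely, for $E, F \in \prfdg X$ one has
$$\bigoplus_{p\in\ZZ}\Hom_{\prf X}(E, F[p]) \;\cong\; \bigoplus_{p\in\ZZ}\mathbb{H}^p(X, \RHOM_{\cO_X}(E, F)).$$
Since $E$ is perfect (locally a bounded complex of locally free sheaves of finite type) and $F$ has bounded coherent cohomology, the sheaf complex $\RHOM_{\cO_X}(E, F)$ lies in $\bD^b(\coh X).$ Properness of $X$ over $\kk$ together with Grothendieck's coherence theorem for proper direct images then yields that $\mathbb{H}^p(X, \RHOM_{\cO_X}(E, F))$ is finite-dimensional in each degree and vanishes outside a bounded range, which gives properness of $\prf X.$

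For the converse, the plan is to invoke the classical cohomological criterion: a separated $\kk$-scheme of finite type is proper if and only if $H^0(X, \F)$ is finite-dimensional for every coherent sheaf $\F.$ Assuming $\prf X$ is proper, the task is to derive this cohomological finiteness from the finite-dimensionality of graded Homs between perfect complexes. In the regular case the argument is immediate, since every coherent sheaf is then perfect and $H^0(X, \F) = \Hom(\cO_X, \F)$ sits inside a finite-dimensional graded Hom. In the general (possibly singular) case, one approximates coherent sheaves by perfect complexes using compact generation of $\D_{\Qcoh}(X)$ by $\prf X$ (\cite{Ne, BVdB}) together with a Thomason--Trobaugh style truncation: for any $\F \in \coh X$ and any $n,$ produce a perfect $P$ and a morphism $P \to \F$ that agrees with $\F$ on cohomology up to degree $n,$ thereby bounding $H^n(X, \F)$ in terms of graded Hom spaces within $\prf X.$

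The main obstacle is precisely this approximation step in the non-regular case, since coherent sheaves are not perfect and one needs compact generation plus a careful truncation argument to control $\bR\Gamma(X, \F)$ by Hom spaces in $\prf X.$ A possible alternative, avoiding approximation entirely, is to argue by contradiction via a Nagata compactification $j\colon X \hookrightarrow \bar X$: if $\bar X \setminus X$ is nonempty, one constructs explicitly a perfect complex on $X$ (for example the restriction of a suitably chosen line bundle on $\bar X$) whose graded endomorphism algebra is infinite-dimensional, contradicting the assumed properness of $\prf X.$ Either route completes the proof.
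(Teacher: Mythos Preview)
The paper does not prove this proposition; it is cited from \cite[Prop.~3.30]{O_glue}. So there is no in-paper argument to compare against, and I assess your proposal on its own merits.

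Your forward direction is correct and standard.

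For the converse, your main route --- the cohomological criterion together with approximation of coherent sheaves by perfect complexes --- is essentially correct, but two points need to be made explicit. First, the criterion ``$X$ is proper iff $H^0(X,\F)$ is finite-dimensional for every coherent $\F$'' is true but not as readily citable as you suggest; one clean justification is to use Nagata/Chow to produce, when $X$ is not proper, a closed integral curve $C\subset X$ that is itself non-proper (hence affine), so that $H^0(X,i_*\cO_C)=H^0(C,\cO_C)$ is infinite-dimensional. Second, in the approximation step you implicitly need the finite cohomological dimension of $X$ (Grothendieck vanishing): if $P\to\F$ has cone in $\D^{<-N}_{\Qcoh}(X)$, one only gets $H^0(X,P)\cong H^0(X,\F)$ once $N>\dim X$, since the cone can still contribute to $H^0$ through higher sheaf cohomology. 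This is the detail that makes the approximation actually conclude.

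Your alternative Nagata route, as stated, has a real gap. The graded endomorphism algebra of any line bundle $L$ on $X$ is $\bigoplus_i H^i(X,\cO_X)$, independent of the choice of $L$ or of the compactification, so ``suitably chosen'' does nothing. It is not obvious that this total cohomology is infinite-dimensional whenever $X$ is non-proper: for $X=\PP^2\setminus\{p\}$ one has $H^0(X,\cO_X)\cong\kk$ by Hartogs, and it is only $H^1$ that blows up, via local cohomology. Establishing in general that $H^*(X,\cO_X)$ is infinite-dimensional for non-proper $X$ is essentially the same work as your first route, so this alternative is not an independent shortcut.
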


Another fundamental property of usual commutative schemes that can be extended to noncommutative schemes is regularity.
\begin{definition} A noncommutative scheme $\dX=\prfdg\dR$ is called {\sf regular} if the triangulated category $\prf\dR$ is regular, i.e. it has
a strong generator.
\end{definition}
It was proved in \cite{Ne_str} that for a quasi-compact and separated scheme $X$ the triangulated category  $\prf X$ is regular if
and only if $X$ can be covered by open affine subschemes $\Spec(R_i),$ where each $R_i$ has finite
global dimension. There is also a short proof of this fact for a separated noetherian scheme over $\kk$ of finite Krull dimension  with noetherian square $X\times_{\kk} X$ (see \cite[Th.3.27]{O_glue}).

Regularity of a scheme is closely related to another important property that is called smoothness.
However, smoothness depends on the base field $\kk.$
A small $\kk$\!--linear DG category $\dA$ is called {\sf $\kk$\!--smooth} if it is perfect as a DG bimodule, i.e. as a DG module over  $\dA^{\op}\otimes_{\kk}\dA$ (see \cite{KS}). Thus, we obtain a definition of smoothness for noncommutative schemes:

\begin{definition}
A noncommutative scheme $\dX=\prfdg\dR$ is called {\sf $\kk$\!--smooth} if the the DG category $\prfdg\dR$ is $\kk$--smooth, i.e
it is perfect as a DG bimodule.
\end{definition}

Smoothness is invariant under Morita equivalence \cite{LS}.
This means that if $\D(\dA)$ and $\D(\dB)$
are equivalent through a functor of the form $(-)\stackrel{\bL}{\otimes_{\dA}}\mT,$ where
$\mT$ is an $\dA\hy\dB$\!--bimodule, then $\dA$ is smooth if and only if $\dB$ is smooth.
Thus, the DG category $\prfdg\dR$ is smooth if and only if $\dR$ is smooth.
It is proved in \cite{Lu} that a smooth DG category $\dA$ is regular. Thus, a smooth noncommutative scheme is also regular.

A usual commutative scheme $X$ over a field $\kk$ is called {\sf smooth} if it is of finite type and the scheme
$\widebar{X}=X\otimes_{\kk} \widebar{\kk}$ is regular, where $\widebar{\kk}$ is an algebraic closure
of $\kk.$
It is proved in \cite{Lu} that a separated scheme  $X$ of finite type
 is smooth if and only if the DG category
$\prfdg X$ is smooth (see also \cite{LS2, O_glue}).
Thus, we have defined and  can talk about smooth, regular and proper noncommutative schemes.

For any two derived noncommutative schemes $\dX$ and $\dY$ we can consider the tensor product $\dX\otimes_{\kk}\dY.$
If both $\dX$ and $\dY$ are proper, then $\dX\otimes_{\kk}\dY$ is also proper. It can be also shown that the tensor product
$\dX\otimes_{\kk}\dY$ is smooth when $\dX$ and $\dY$ are smooth. However, the tensor product of regular schemes is not necessarily regular
even for usual commutative schemes.

\subsection{Morphisms of derived noncommutative schemes}

Let $\dX=\prfdg\dR$ and $\dY=\prfdg\dS$ be two derived
noncommutative schemes over an arbitrary field $\kk.$
\begin{definition}
A {\sf morphism} $\mf:\dX \to \dY$ of noncommutative schemes  is a quasi-functor
$\mF:\prfdg\dS\to \prfdg\dR.$
\end{definition}
With any usual morphism $f: X\to Y$ of commutative schemes $X$ and $Y$ one can associate the inverse image functor $f^*: \prfdg Y \to \prfdg X.$
Therefore, any morphism between commutative schemes induces a morphism between the corresponding noncommutative schemes.
Meanwile,  in the derived noncommutative world we have a lot of additional morphisms between commutative schemes because there are many other quasi-functors between DG categories of perfect  complexes.

Let $\dX=\prfdg\dR$ and $\dY=\prfdg\dS$ be two noncommutative schemes and let $\mf:\dX \to \dY$  be a morphism, i.e.  a quasi-functor
$\mF:\prfdg\dS\to \prfdg\dR.$ Any such morphism induces  derived functors
\[
\bL \mf^*:=\bL F^*: \D(\dS)\lto \D(\dR)
\quad
\text{and}
\quad
\bR\mf_*:=\bR F_*: \D(\dR)\lto \D(\dS)
\]
that are defined for any quasi-functor $\mF$ in (\ref{derived_quasi}) and will be called the inverse image and the direct image functors, respectively.
We also have the functor $\mf^{!}:=\bR F^{!}$ that is  right adjoint to $\bR\mf_*.$
The inverse image functor $\bL \mf^*$ sends perfect modules to perfect modules, and its restriction to $\prf\dS$ is isomorphic to the homotopy functor
$\H^0(\mF):\prf\dS\to \prf\dR.$ The direct image functor $\bR\mf_*: \D(\dR)\to \D(\dS)$ commutes with arbitrary direct sums.

The most important morphisms for us are those for which the inverse image functor $\bL \mf^*$ is fully faithful.

\begin{definition} A morphism $\mf:\dX \to \dY$ of noncommutative schemes is called an {\sf ff-morphism (fully faithful morphism)}
if the inverse image functor $\bL\mf^*$ is fully faithful.
\end{definition}
Note that the functor $\bL\mf^*:\D(\dS)\to \D(\dR)$ is fully faithful if and only if its restriction
$\bL\mf^*:\prf\dS\to \prf\dR$ is fully faithful (see, e.g., Proposition \ref{Keller2} and Remark \ref{rem_quasi}). Furthermore, for any ff-morphism $\mf:\dX \to \dY$ the functor
$\mf^!:\D(\dS)\to \D(\dR)$ is fully faithful too, because the composition functor $\bR \mf_* \mf^!$ is right adjoint to $\bR \mf_*\bL \mf^*\cong\id.$

It is easy to see that by projection formula a morphism $f:X\to Y$ between usual commutative schemes is an ff-morphism if and only if the direct image $\bR f_*\cO_X$ is isomorphic to $\cO_Y.$ Only in this case the inverse image functor
$\bL f^*: \D_{\Qcoh}( Y)\to \D_{\Qcoh}( X)$ is fully faithful.

Another class of morphisms that can be extended to derived noncommutative schemes is the so called perfect proper morphisms.

\begin{definition} A morphism $\mf:\dX \to \dY$ of derived noncommutative schemes is called a {\sf pp-morphism (perfect proper morphism)}
if the direct image functor $\bR\mf_*$ sends perfect modules to perfect ones.
\end{definition}

This also means that the inverse image functor $\bL \mf^*$ as a functor from $\prf\dS$ to $\prf\dR$ has a right adjoint
$\bR \mf_*: \prf\dR\to\prf\dS.$ As a consequence, the right adjoint to the corresponding to $\mf$ quasi-functor $\mF$ induces a quasi-functor
$\mG: \prfdg\dR\to\prfdg\dS$ for which $\H^0(\mG)\cong \bR \mf_*.$
Thus, in this case we obtain a morphism $\mg:\dY\to\dX$ that can be called a ``right adjoint'' morphism to the morphism  $\mf.$
If a morphism $\mf$ is an ff-morphism and a pp-morphism simultaneously, we obtain an isomorphism of morphisms $\mf\ldot\mg\cong\mId_{\dY}.$

Perfect morphisms of schemes  were defined in \cite[III]{Il} as  pseudo-coherent morphisms of locally finite Tor-dimension.
For a locally noetherian scheme $Y$ a pseudo-coherent morphism
$f:X\to Y$ is the same as a morphism locally of finite type and in this case if $f$ is perfect and proper then the direct image functor $\bR f_*$ sends perfect complexes on $X$
to perfect complexes on $Y$ (see \cite[III]{Il} and \cite[2.5.5]{TT}).

We can also define other types of morphisms. For example, we can say that a morphism $\mf:\dX \to \dY$ is {\sf quasi-affine} if the image of perfect modules $\prf\dS$ under the  inverse image functor $\bL\mf^*$
classically generate the category $\prf\dR.$ Taking into account Proposition \ref{Keller2} we see that a quasi-affine ff-morphism  $\mf:\dX \to \dY$ induces an equivalence
$\bL\mf^*:\prf\dS\to\prf\dR$ and  a quasi-equivalence between the DG categories $\prfdg\dS$ and $\prfdg\dR.$
Hence, we conclude that the noncommutative schemes $\dX$ and $\dY$ are isomorphic in this case.

In commutative situation smooth projective varieties $X$ and $Y$ that have equivalent triangulated categories $\prf X$ and $\prf Y$ are called Fourier-Mukai partners.
Since any equivalence is represented by an object on the product \cite{O_main, LO},  the DG categories $\prfdg X$ and $\prfdg Y$ are quasi-equivalent, and Fourier-Mukai partners
determine the same derived noncommutative scheme. The most famous example due to S.~Mukai \cite{Mu} and is given by an abelian variety $A$ and the dual abelian variety $\widehat{A}$ (see also \cite{O_ab, O_diss}).

One can introduce the {\sf triangulated category of cohomologically bounded pseudo-coherent complexes} on $\dX$ and denote it by $\D^b(\coh\dX),$ because it is equivalent to
a bounded derived category of coherent sheaves on a noetherian scheme in the commutative case.
At first, let us consider the full triangulated subcategory $\D^b(\dR)\subset\D(\dR)$  consisting of all
DG $\dR$\!--modules $\mN$ cohomology of which are bounded.
Now we say that an object $\mM\in\D^b(\dR)$ belongs to  the full subcategory $\D^b(\coh\dX)\subset\D^b(\dR)$
if for any sufficiently large $N\in\NN$ there are a perfect module $\mP\in\prf\dR$ and a morphism $\mP\to \mM$ such that the induced maps $H^k(\mP)\to H^k(\mM)$ are isomorphisms for all $k\ge -N$ (see, e.g., \cite{Ne_new}).
Through the definition of cohomology $H^k(\mM)=\Hom_{\D(\dR)}(\dR, \mM)$ depends on $\dR,$ the definition of a pseudo-coherent module does not depend on the choice of a classical generator for $\prf\dR.$ It is evident that there is a natural inclusion of triangulated categories
$\prf\dR\subseteq\D^b(\coh\dX).$

Any morphism $\mf:\dX \to \dY$ is a quasi-functor $\mF:\prfdg\dS\to \prfdg\dR$ and is given by a DG $\dS\hy\dR$\!--bimodule $\mT.$
Since $\dR$ is cohomologically bounded and $\mT$ is perfect as DG $\dR$\!--module, the direct image functor
$\bR\mf_*=\bR\Hom_{\dR}(\mT, -)$ sends $\D^b(\dR)$ to $\D^b(\dS)$ for any morphism $\mf.$
Now we can say that a morphism $\mf: \dX \to \dY$ is of {\sf finite Tor-dimension} if the inverse image functor $\bL\mf^*:\D(\dS)\to \D(\dR)$ sends $\D^b(\dS)$ to $\D^b(\dR)$ and we  say that $\mf$ is {\sf proper} if the direct image functor $\bR\mf_*$ sends $\D^b(\coh\dX)$ to $\D^b(\coh\dY).$
\begin{definition}\label{im_morph}
A morphism of derived noncommutative schemes $\mf:\dX \to \dY$ is called an {\sf immersion} if the direct image functor
$\bR\mf_*: \D(\dR)\to\D(\dS)$ is fully faithful.
\end{definition}
In this case the derived category $\D(\dR)$ can be obtained as a Bousfield localization (and colocalization) of $\D(\dS)$ (see, e.g. \cite[Ch.9]{Ne_book}).
An example of such a morphism in commutative algebraic geometry  is given by a usual open immersion $j: U\hookrightarrow Y.$

\begin{definition}\label{comp}
Let $\dX$ be a noncommutative scheme. A {\sf compactification} of $\dX$ is a morphism $\mf: \dX \to \widebar{\dX}$
such that $\mf$ is an immersion, and the noncommutative scheme $\widebar\dX$ is proper.
\end{definition}
Another important notion is a resolution of singularities, or desingularization.

\begin{definition}\label{des}
Let $\dX$ be a derived noncommutative scheme. A {\sf regular (resp. smooth) desingularization} of $\dX$ is a morphism $\mf: \widetilde\dX \to \dX$
such that $\mf$ is an ff-morphism, and the noncommutative scheme $\widetilde\dX$ is regular (resp. smooth).
\end{definition}

We have to note that a usual resolution of singularities $f:\widetilde X\to X$ of a commutative scheme is not necessarily a desingularization in the noncommutative case, because we ask that
the inverse image functor $\bL f^*$ is fully faithful. This condition is fulfilled only if $\bR f_* \cO_{\widetilde X}\cong \cO_X,$ i.e. if $X$ has rational singularities.
However, it follows from the main theorem of \cite[Th.1.4]{KL} that a  separated scheme of finite type   $X$  over a field of characteristic $0$
has a smooth desingularization as in Definition \ref{des}.
By the construction of this  resolution, there is a quasi-functor $\mG: \prfdg X\to\dD,$
where $\dD$ is a gluing of DG categories of perfect complexes on smooth proper schemes, and
the induced homotopy functor $G:\prf Y\to \D$ is fully faithful.

\subsection{Categories of morphisms and Serre functors}
It was discussed above that quasi-functors form a DG category. Therefore, morphisms between derived noncommutative schemes $\dX$ and $\dY$ also form a DG category
which will be denoted by $\dMor(\dX, \dY).$ For any derived noncommutative schemes $\dX=\prfdg\dR$ and $\dY=\prfdg\dS$ we have a quasi-equivalences of DG categories:
\[
\dMor(\dX, \dY):=\dRep(\prfdg\dS, \prfdg\dR)\cong\dRep(\dS, \prfdg\dR)\subset \SF(\dS^{\op}\otimes_{\kk}\dR).
\]

The DG category $\dMor(\dX, \dY)$ is pretriangulated and there is a triangulated category of morphisms $\Mor(\dX, \dY)=\H^0(\dMor(\dX, \dY)).$
In particular, we can add any two morphisms and consider morphisms between morphisms.
It is evident that the DG category $\dMor(\dX, \pt)$ of morphisms from $\dX$ to the point $\pt$ is quasi-equivalent to $\prfdg\dR.$
Besides, the DG category $\dMor(\pt, \dX)$ consists of DG $\dR^{\op}$\!--modules $\mN$ that are perfect as complexes of $\kk$\!--vector spaces, i.e.
$\dim_{\kk} \bigoplus_i H^i(\mN)< \infty.$

By Theorem \ref{saturated}, if a triangulated category $\T$ is regular, proper, and idempotent complete, then
any exact functor from $\T^{\op}$ to $\prf\kk$ is representable.
The proof of  Theorem \ref{saturated} (see \cite{BVdB}) works for DG categories without any changes and, moreover, can be deduced from it.

\begin{proposition}\label{dg_saturated}\cite[Th. 3.18]{O_glue} Let $\dA$ be a small DG category that is regular and proper.
Then a DG module $\mM$ is perfect if and only if $\dim\bigoplus_i  H^i(\mM(X))<\infty$
for all $X\in\dA.$
\end{proposition}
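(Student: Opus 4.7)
The plan is to deduce this from Theorem~\ref{saturated} applied to the triangulated category $\prf\dA$, which is regular (because $\dA$ is), proper (because $\dA$ is), and idempotent complete by construction of $\prfdg\dA$.

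For the forward direction, suppose $\mM$ is perfect. Since $\dA$ is proper, every representable $\mh^X$ satisfies $\dim\bigoplus_i H^i(\mh^X(Y))<\infty$ for all $Y\in\dA$, because $\mh^X(Y)=\dHom_\dA(Y,X)$ and properness of $\dA$ means exactly that these Hom-complexes have finite-dimensional total cohomology. The class of DG modules $\mN$ with $\dim\bigoplus_i H^i(\mN(Y))<\infty$ for all $Y\in\dA$ is closed under shifts, cones (by the long exact cohomology sequence of a distinguished triangle), and direct summands. Since every perfect DG module lies in the thick hull of the representables inside $\D(\dA)$, the forward implication follows.

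For the converse, assume $\dim\bigoplus_i H^i(\mM(X))<\infty$ for every $X\in\dA$. Consider the contravariant cohomological functor
$$
F \;:=\; \Hom_{\D(\dA)}(-,\,\mM)\colon (\prf\dA)^{\op} \lto \{\,\kk\text{-vector spaces}\,\}.
$$
On shifts of representables one has $F(\mh^X[n])\cong H^{-n}(\mM(X))$, which is finite-dimensional by hypothesis. By exactly the same closure argument as in the forward direction (shifts, cones, direct summands), $F$ takes finite-dimensional values on every object of $\prf\dA$. Since $\prf\dA$ is regular, proper, and idempotent complete, Theorem~\ref{saturated} applies and yields an object $\mN\in\prf\dA$ together with a natural isomorphism $F\cong\Hom_{\D(\dA)}(-,\mN)$; the image of $\id_\mN$ under this isomorphism is a morphism $\varphi\colon\mN\to\mM$ in $\D(\dA)$ implementing the natural transformation by post-composition.

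It remains to check that $\varphi$ is a quasi-isomorphism. Evaluating on $\mh^X[n]$ and using the derived-category Yoneda identifications $\Hom_{\D(\dA)}(\mh^X[n],\mM)\cong H^{-n}(\mM(X))$ and similarly for $\mN$, we see that $\varphi$ induces an isomorphism $H^{-n}(\mN(X))\stackrel{\sim}{\lto} H^{-n}(\mM(X))$ for every $X\in\dA$ and every $n\in\ZZ$. Hence $\varphi$ is a quasi-isomorphism, $\mM\cong\mN$ in $\D(\dA)$, and therefore $\mM$ is perfect. The one substantive input is the invocation of Theorem~\ref{saturated}; the closure verification and the Yoneda-style extraction of a concrete morphism from representability are formal and should not present an obstacle.
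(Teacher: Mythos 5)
Your proof is correct and follows exactly the route the paper indicates: the paper states that Proposition \ref{dg_saturated} ``can be deduced from'' Theorem \ref{saturated}, and your argument — checking the finite-type hypothesis on representables, propagating it through the thick hull, invoking representability for $\Hom_{\D(\dA)}(-,\mM)$ on $\prf\dA$, and then showing the resulting comparison map is an isomorphism by evaluating on shifted representables — is precisely that deduction. No gaps.
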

In particular, we obtain the following corollary.
\begin{corollary}\label{representable}
Let $\dY$ be a derived noncommutative scheme that is regular and proper. Then there is a quasi-equivalence $\dMor(\pt, \dY)\cong \dY^{\op}.$
\end{corollary}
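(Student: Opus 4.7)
My plan is to identify both $\dMor(\pt,\dY)$ and $\dY^{\op}$ with a common DG subcategory of $\Mod\dS^{\op},$ writing $\dY=\prfdg\dS,$ and then to invoke Proposition~\ref{dg_saturated} as the bridge. Unfolding the definition of a morphism $\pt\to\dY$ as a quasi-functor $\prfdg\dS\to\prfdg\kk$ and passing via Morita invariance to quasi-functors $\dS\to\prfdg\kk$ (cf.~Remark~\ref{rem_quasi}), the description in the paragraph preceding the corollary yields a quasi-equivalence between $\dMor(\pt,\dY)$ and the full DG subcategory $\dM\subset\Mod\dS^{\op}$ consisting of those DG $\dS^{\op}$-modules $\mN$ with $\dim_{\kk}\bigoplus_{i}H^{i}(\mN)<\infty.$ On the other hand, by the definition of the opposite noncommutative scheme, $\dY^{\op}=\prfdg(\dS^{\op}),$ so it suffices to show that the natural inclusion $\prfdg(\dS^{\op})\hookrightarrow\dM$ is a quasi-equivalence.

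One direction is immediate: properness of $\dY$ says that $H^{*}(\dS^{\op})$ is finite-dimensional over $\kk,$ and the class $\dM$ is closed under shifts, finite direct sums, cones, and direct summands, so it contains every perfect DG $\dS^{\op}$-module. For the reverse inclusion one applies Proposition~\ref{dg_saturated} to the small DG category $\dA=\dS^{\op},$ which has a single object, so the finite-dimensional cohomology condition on every $X\in\dA$ collapses to the finite-dimensional total $\kk$-cohomology condition defining $\dM.$ Granted that $\dS^{\op}$ is regular and proper, the proposition directly identifies $\dM$ with $\prfdg(\dS^{\op})$ and produces the desired quasi-equivalence $\dMor(\pt,\dY)\cong\dY^{\op}.$

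The main obstacle lies in verifying the hypotheses of Proposition~\ref{dg_saturated} on $\dS^{\op}.$ Properness is symmetric between $\dS$ and $\dS^{\op},$ so it transfers at no cost. Regularity is the delicate point: it is not a purely formal consequence of regularity of $\dS.$ In the present proper setting, however, a strong generator of $\prf\dS$ can be transported across the $\kk$-linear duality $\bR\Hom_{\kk}(-,\kk),$ which on proper DG algebras interchanges compact objects of $\D(\dS)$ and $\D(\dS^{\op})$ and preserves generation bounds; this produces a strong generator of $\prf(\dS^{\op}).$ This is the only step that is not a direct unfolding of definitions or a citation of Proposition~\ref{dg_saturated}, and I would expect it to be the principal technical point in the argument.
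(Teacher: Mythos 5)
Your overall route is the same as the paper's: the corollary is obtained there, essentially in one line, by combining the description (given in the paragraph preceding Proposition \ref{dg_saturated}) of $\dMor(\pt,\dY)$ as the DG category of DG $\dS^{\op}$\!--modules with finite-dimensional total cohomology with Proposition \ref{dg_saturated} applied to $\dS^{\op}$, exactly as you do. The only place where you go beyond the paper is your attempt to verify the regularity of $\dS^{\op}$ --- a hypothesis the paper leaves implicit --- and that is the one step of your argument that does not work as written. The duality $\dHom_{\kk}(-,\kk)$ does \emph{not} interchange compact objects of $\D(\dS)$ and $\D(\dS^{\op})$: it sends a perfect right DG $\dS$\!--module to a left DG $\dS$\!--module whose total cohomology is finite-dimensional, and whether such a module is perfect over $\dS^{\op}$ is precisely the content of Proposition \ref{dg_saturated} for $\dS^{\op}$, i.e.\ of the corollary itself. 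In particular the dual $\dS^{*}$ of the strong generator $\dS$ is only known a priori to be cohomologically finite, not to lie in $\prf\dS^{\op}$, so duality does not directly transport a strong generator of $\prf\dS$ to one of $\prf\dS^{\op}$; what it gives (together with Proposition \ref{dg_saturated} for $\dS$) is an anti-equivalence of $\prf\dS$ with the category of cohomologically finite left modules, and one still has to know that $\dS^{*}$ is perfect as a left module (for an ordinary finite-dimensional algebra this is the left--right symmetry of finite global dimension). So you have correctly isolated the delicate point, but the justification you offer for it is circular; to be fair, the paper offers none at all.
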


If the noncommutative scheme $\dY=\prfdg\dS$ is proper, then the DG category $\dMor(\dX, \dY)$ contains $\prfdg (\dS^{\op}\otimes_{\kk}\dR)$ as a DG subcategory,
 because the perfect DG $\dS\hy\dR$\!--bimodule $\dS\otimes_{\kk}\dR,$ which classically generates the category $\prfdg (\dS^{\op}\otimes_{\kk}\dR),$  is perfect as a DG $\dR$\!--module too.

Besides, if a noncommutative scheme $\dY$ is smooth, then we have an opposite inclusion $\dMor(\dX, \dY)\subseteq\prfdg (\dS^{\op}\otimes_{\kk}\dR).$
Indeed, for a smooth $\dY$ the DG $\dS\hy\dS$\!--bimodule $\dS$ is perfect. The category of perfect $\dS\hy\dS$\!--bimodules is classically  generated by the bimodule
$\dS\otimes_{\kk}\dS.$ Thus, any DG $\dS\hy\dR$\!--bimodule $\mT\cong \dS\otimes_{\dS}\mT$ belongs to the subcategory
generated by the DG $\dS\hy\dR$\!--bimodule
\[
(\dS\otimes_{\kk}\dS)\otimes_{\dS}\mT\cong\dS\otimes_{\kk}\mT.
\]
Suppose that $\mT$ is perfect as a DG $\dR$\!--module, then $\dS\otimes_{\kk}\mT$ is perfect as DG $\dS\hy\dR$\!--bimodule, and, hence,  $\mT$ is perfect as DG $\dS\hy\dR$\!--bimodule too. Thus, we obtain an inclusion $\dMor(\dX, \dY)\subseteq\prfdg (\dS^{\op}\otimes_{\kk}\dR).$
Finally, we obtain the following proposition:
\begin{proposition} Let $\dY$ be  a derived noncommutative scheme that is smooth and proper.   Then there is a natural quasi-equivalence
\begin{equation}
\dMor(\dX, \dY):=\prfdg (\dS^{\op}\otimes_{\kk}\dR)\cong \dY^{\op}\otimes_{\kk}\dX.
\end{equation}
for any derived noncommutative scheme $\dX.$
\end{proposition}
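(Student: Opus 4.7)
The proposition has two assertions: the quasi-equivalence $\dMor(\dX, \dY) \cong \prfdg(\dS^{\op}\otimes_{\kk}\dR)$, and the identification with $\dY^{\op}\otimes_{\kk}\dX$. The second is essentially by definition, since the paper defines $\dY^{\op}\otimes_{\kk}\dX$ to be $\prfdg(\dS^{\op}\otimes_{\kk}\dR)$; so the real content is the first quasi-equivalence. The plan is to establish it as a double inclusion of full DG subcategories inside $\SF(\dS^{\op}\otimes_{\kk}\dR)$, and each inclusion uses exactly one of the two hypotheses (properness and smoothness) on $\dY$.

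First, I would set up the framework. Recall the quasi-equivalences
\[
\dMor(\dX, \dY) = \dRep(\prfdg\dS, \prfdg\dR) \simeq \dRep(\dS, \prfdg\dR) \subset \SF(\dS^{\op}\otimes_{\kk}\dR),
\]
so a morphism is represented by a DG $\dS\hy\dR$-bimodule $\mT$ that is quasi-representable as a DG $\dR$-module, equivalently perfect as a DG $\dR$-module. Thus both $\dMor(\dX, \dY)$ and $\prfdg(\dS^{\op}\otimes_{\kk}\dR)$ are full DG subcategories of $\SF(\dS^{\op}\otimes_{\kk}\dR)$, and it suffices to show their objects coincide.

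For the inclusion $\prfdg(\dS^{\op}\otimes_{\kk}\dR) \subseteq \dMor(\dX, \dY)$, I use that $\dY$ is proper: by Definition \ref{prop_def} (and the remark that properness is equivalent to finite-dimensionality of $H^*(\dS)$), the DG algebra $\dS$ is perfect as a complex of $\kk$-vector spaces. Consequently $\dS\otimes_{\kk}\dR$, which classically generates $\prfdg(\dS^{\op}\otimes_{\kk}\dR)$, is perfect as a DG $\dR$-module. Since perfectness over $\dR$ is preserved under cones, shifts, and direct summands, every object of $\prfdg(\dS^{\op}\otimes_{\kk}\dR)$ is perfect as a DG $\dR$-module, hence defines a quasi-functor.

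For the reverse inclusion $\dMor(\dX, \dY) \subseteq \prfdg(\dS^{\op}\otimes_{\kk}\dR)$, which is the main step and where smoothness is essential, let $\mT$ be a DG $\dS\hy\dR$-bimodule that is perfect as a DG $\dR$-module. Smoothness of $\dY$ means that the diagonal bimodule $\dS$ is perfect over $\dS^{\op}\otimes_{\kk}\dS$, i.e.\ it lies in the thick subcategory classically generated by $\dS\otimes_{\kk}\dS$. Applying the exact functor $(-)\stackrel{\bL}{\otimes}_{\dS}\mT$ from $\dS\hy\dS$-bimodules to $\dS\hy\dR$-bimodules, I get that $\mT \cong \dS\otimes_{\dS}\mT$ belongs to the thick subcategory classically generated by $(\dS\otimes_{\kk}\dS)\otimes_{\dS}\mT \cong \dS\otimes_{\kk}\mT$. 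Since $\mT$ is perfect as a DG $\dR$-module, $\dS\otimes_{\kk}\mT$ is perfect as a DG $\dS\hy\dR$-bimodule, and therefore so is $\mT$. The subtle point here, which I expect to be the main technical obstacle, is to verify that $(-)\stackrel{\bL}{\otimes}_{\dS}\mT$ really sends the class of bimodules perfect over $\dS^{\op}\otimes_{\kk}\dS$ into bimodules perfect over $\dS^{\op}\otimes_{\kk}\dR$; this is checked on the generator $\dS\otimes_{\kk}\dS$ and then propagated through the triangulated operations, together with the naturality of the tensor construction, yielding the claimed inclusion.

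Finally, combining the two inclusions gives the natural quasi-equivalence $\dMor(\dX, \dY) \simeq \prfdg(\dS^{\op}\otimes_{\kk}\dR)$, and the identification with $\dY^{\op}\otimes_{\kk}\dX$ follows from the definition of the tensor product of noncommutative schemes.
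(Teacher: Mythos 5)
Your proof is correct and follows essentially the same route as the paper: the inclusion $\prfdg(\dS^{\op}\otimes_{\kk}\dR)\subseteq\dMor(\dX,\dY)$ from properness via perfectness of the generator $\dS\otimes_{\kk}\dR$ over $\dR$, and the reverse inclusion from smoothness by tensoring the perfect diagonal bimodule $\dS$ with $\mT$ over $\dS$ and checking perfectness on the generator $\dS\otimes_{\kk}\mT$. The "subtle point" you flag is handled in the paper exactly as you resolve it, so there is no gap.
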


At the same time, there is the following theorem due to B.~To\"en.
\begin{theorem}\cite{To}\label{toen}
Let $X$ and $Y$ be smooth projective schemes over a field $\kk.$
Then there is a canonical quasi-equivalence
\[
\dMor(X, Y)\cong\dRep(\prfdg Y,\; \prfdg X)\cong\prfdg\,(Y\times_{\kk} X).
\]
In particular, the DG category $\prfdg\, (Y\times_{\kk} X)$ is quasi-equivalent to the DG category of perfect
DG  modules over $(\prfdg Y)^{\op}\otimes_{\kk}(\prfdg X).$
\end{theorem}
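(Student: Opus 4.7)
\emph{Plan.} The first quasi-equivalence $\dMor(X,Y)\cong\dRep(\prfdg Y,\prfdg X)$ is built into the definition; the real content is the identification with $\prfdg(Y\times_{\kk}X).$ My strategy is to combine the preceding proposition (on morphisms into a smooth and proper target) with an explicit presentation of $\prfdg(Y\times_{\kk}X)$ as a category of perfect DG modules. Fix classical generators $\mE\in\prfdg X$ and $\mF\in\prfdg Y$ and set $\dR=\dEnd(\mE),$ $\dS=\dEnd(\mF).$ Because $X$ and $Y$ are smooth and projective, $\dR$ and $\dS$ are smooth and proper DG algebras, and Proposition \ref{Keller2} yields $\prfdg X\cong\prfdg\dR$ and $\prfdg Y\cong\prfdg\dS.$ The preceding proposition applied to $\dY=\prfdg\dS$ then gives
\[
\dMor(X,Y)\cong\dY^{\op}\otimes_{\kk}\dX\cong\prfdg(\dS^{\op}\otimes_{\kk}\dR).
\]

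\emph{Key step.} It remains to produce a quasi-equivalence $\prfdg(Y\times_{\kk}X)\cong\prfdg(\dS^{\op}\otimes_{\kk}\dR).$ I would take as generator the external product
\[
\mG\;=\;\mF^{\vee}\boxtimes \mE\;\in\;\prfdg(Y\times_{\kk}X),
\]
where $\mF^{\vee}=\rhom(\mF,\cO_Y)$ is the derived dual, which lies in $\prfdg Y$ since $Y$ is smooth. By the K\"unneth formula for perfect complexes on smooth projective schemes over $\kk,$ together with the fact that derived duality is an anti-equivalence of $\prfdg Y$ (so $\dEnd(\mF^{\vee})\cong\dS^{\op}$), one computes
\[
\dEnd_{Y\times X}(\mG)\;\cong\;\dEnd_Y(\mF^{\vee})\otimes_{\kk}\dEnd_X(\mE)\;\cong\;\dS^{\op}\otimes_{\kk}\dR.
\]
Granting that $\mG$ classically generates $\prfdg(Y\times_{\kk}X),$ Proposition \ref{Keller2} applied to the full DG subcategory on the single object $\mG$ yields the sought quasi-equivalence, which combined with the previous display closes the proof.

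\emph{Main obstacle.} The two nontrivial ingredients are (a) generation of $\prfdg(Y\times_{\kk}X)$ by $\mG,$ and (b) the K\"unneth identification of its DG endomorphism algebra. For (a) a standard orthogonality argument should suffice: if $A\in\D_{\Qcoh}(Y\times_{\kk}X)$ is right orthogonal to every $\mF'\boxtimes\mE'$ with $\mE'\in\prf X$ and $\mF'\in\prf Y,$ then by projection formula $\bR p_{X*}\rhom(p_Y^{*}\mF',A)$ lies in the right orthogonal to the classical generator $\mE$ of $\D_{\Qcoh}(X),$ hence is zero; varying $\mF'$ forces $A=0,$ and the reduction from arbitrary exterior products of generators to the single object $\mF^{\vee}\boxtimes\mE$ uses that $\mF^{\vee}$ remains a classical generator of $\prf Y.$ The harder part will be (b): one must realise $\mG$ in a functorial DG enhancement of $\prf(Y\times_{\kk}X)$ (for instance via $h$-injective resolutions), so that the composition on $\dEnd(\mG)$ literally coincides with the tensor product of the DG compositions on $\dEnd(\mF^{\vee})$ and $\dEnd(\mE).$ This is where the smooth and projective hypothesis is essentially used, and it is the technical heart of the argument, though by now it is standard.
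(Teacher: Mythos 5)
The paper does not actually prove this statement: it is quoted from To\"en, whose argument runs through the model structure on DG categories, derived Morita theory, and a descent/local-to-global reduction to the affine case. Your reduction is therefore a genuinely different route, and its first half is sound and non-circular: the identification $\dMor(X,Y)=\dRep(\prfdg Y,\prfdg X)\cong\prfdg(\dS^{\op}\otimes_{\kk}\dR)$ follows from the proposition immediately preceding the theorem, whose proof uses only smoothness and properness of $\prfdg Y$ (results of Lunts et al.), not To\"en's theorem. Your generation argument for $\mF^{\vee}\boxtimes\mE$ via adjunction and the projection formula is the standard Bondal--Van den Bergh one, and the observation that $(-)^{\vee}$ preserves classical generators (being an anti-autoequivalence of $\prf Y$) is correct, as is the variance bookkeeping giving $\dEnd(\mF^{\vee})\cong\dS^{\op}.$ What your reduction buys is a proof that avoids To\"en's general machinery at the price of choosing generators; what it loses is canonicity and the general (non-projective) case.

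The remaining weight of the theorem, however, now rests entirely on your step (b), and there the sketch is not yet a proof: you have in effect reduced the main assertion to its own ``in particular'' clause, which is where essentially all the geometric content lives. Concretely, you must exhibit a DG functor (or zig-zag) $\prfdg Y\otimes_{\kk}\prfdg X\to\prfdg(Y\times_{\kk}X)$ realizing $\boxtimes$ and inducing quasi-isomorphisms on Hom complexes of perfect objects; the cohomological K\"unneth isomorphism alone does not pin down the quasi-isomorphism type of the DG algebra $\dEnd(\mF^{\vee}\boxtimes\mE).$ Your suggestion of h-injective resolutions does not work directly, because the external tensor product of h-injective complexes is not h-injective, so $\boxtimes$ is not a DG functor in that enhancement. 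The standard repair on projective schemes is to represent $\mE$ and $\mF^{\vee}$ by strictly perfect (hence h-flat) complexes and to compute the Hom complexes via \v{C}ech resolutions with respect to affine covers, for which the K\"unneth comparison is an explicit quasi-isomorphism of DG algebras compatible with composition. Until that is written out, the argument is an accurate roadmap rather than a complete proof; note also that the explicit bimodule $\mT_{\mE}$ recorded in the paper after the theorem statement is exactly the quasi-equivalence your construction would produce once this step is supplied.
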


The quasi-equivalence between the DG category $\prfdg (Y\times_{\kk} X)$ and
the DG category of perfect DG  modules over $(\prfdg Y)^{\op}\otimes_{\kk}(\prfdg X)$  can be described explicitly.
Consider DG functors
\[
\mpr_1^*: \prfdg Y\lto \prfdg(Y\times_{\kk} X),\quad \text{and}\quad \mpr_2^*: \prfdg X\lto \prfdg (Y\times_{\kk} X)
\]
induced by the projections $pr_1:Y\times_{\kk} X\to Y$ and $pr_2:Y\times_{\kk} X\to X.$
For any perfect complex $\mE$ on the product $Y\times_{\kk} X$ we can define a bimodule
$\mT_{\mE}$ by the rule
\[
\mT_{\mE}(\mN, \mM)\cong \dHom_{\prfdg(Y\times X)}(\mpr_2^* \mM,\; \mpr_1^* \mN\otimes \mE),
\quad\text{where}
\quad
\mM\in \prfdg X,\; \mN\in\prfdg Y.
\]
This is exactly the required quasi-equivalence.

Let $\dX=\prfdg\dR$ be a derived noncommutative scheme. If it is proper and regular, then, by Proposition \ref{serre}, the triangulated category $\prf\dR$ has a Serre
functor. Recall that an autoequivalence $S_{\dX}$ is a Serre functor  if it induces bifunctorial isomorphisms
\[
\Hom_{\prf\dR}(\mN, S_{\dX}(\mM))\stackrel{\sim}{\lto}\Hom_{\prf\dR}(\mM, \mN)^*.
\]
for any $\mM, \mN\in \prf\dR$ (see \cite{BK, BO, BO2} and also \cite{Shk} for DG case).
For a usual regular projective scheme  $X$ the Serre functor is isomorphic to $(-)\otimes\omega_X[n],$ where $\omega_X$ is the canonical sheaf and $n$ is the dimension of $X.$

For any right DG $\dR$\!--module $\mM$ we can define left DG $\dR$\!--modules $\mM^*:=\dHom_{\kk}(\mM, \kk)$ and
$\mM^{\vee}:=\dHom_{\dR}(\mM, \dR).$
Consider the DG $\dR\hy\dR$\!--bimodule $\dR^*:=\dHom_{\kk}(\dR, \kk)$ and the derived functor $(-)\stackrel{\bL}{\otimes}_{\dR}\dR^*$
from $\D(\dR)$ to itself.
If $\mP$ is a perfect DG $\dR$\!--module, then $(\mP^{\vee})^{\vee}\cong \mP$ and  there is an isomorphism $\mP\stackrel{\bL}{\otimes}_{\dR}\dR^*\cong(\mP^{\vee})^*,$ because
\[
\mP\stackrel{\bL}{\otimes}_{\dR}\dR^*\cong \dHom_{\dR}(\dR, \mP\stackrel{\bL}{\otimes}_{\dR}\dR^*)\cong\dHom_{\dR}(\mP^{\vee}, \dHom_{\kk}(\dR, \kk))\cong\dHom_{\kk}(\mP^{\vee}, \kk)\cong(\mP^{\vee})^*.
\]

Let us consider the functor $(-)\stackrel{\bL}{\otimes}_{\dR}\dR^*$ as a functor from $\prf\dR$ to $\D(\dR).$
There is the following sequence of isomorphisms
\[
\Hom_{\dX}(\mP, \mM)^*\cong \Hom_{\kk}(\mM\stackrel{\bL}{\otimes}_{\dR}\mP^{\vee}, \kk)\cong\Hom_{\dX}(\mM, \dHom_{\kk}(\mP^{\vee}, \kk))\cong\Hom_{\dX}(\mM, \mP\stackrel{\bL}{\otimes}_{\dR}\dR^*),
\]
where the notation $\Hom_{\dX}$ is used for a space of morphisms in the triangulated category $\prf\dR.$
These isomorphisms show us that the functor $(-)\stackrel{\bL}{\otimes}_{\dR}\dR^*: \prf\dR\to\D(\dR)$ induces Serre duality. If it sends $\prf\dR$ to itself,
 then we obtain a Serre functor on the category $\prf\dR.$
This happens when $\dR^*$ is a perfect (right) DG $\dR$\!--module.

Furthermore, a perfect DG module $\mP\in\prfdg\dR$ defines a morphism $\mf: \dX\to\pt.$ By the definition of this morphism, we have $\bL\mf^*\kk\cong\mP.$ Let us consider
the functor $\mf^! :\D(\kk)\to\D(\dR)$ and apply it to $\kk.$ The following sequence of isomorphisms
\[
\Hom_{\dX}(\mM, \mf^{!}\kk)\cong\Hom_{\kk}(\bR\mf_{*}\mM, \kk)\cong\Hom_{\kk}(\kk, \bR\mf_{*}\mM)^*\cong\Hom_{\dX}(\bL\mf^*\kk, \mM)^*\cong\Hom_{\dX}(\mP, \mM)^*
\]
shows us that the object $\mf^!\kk$ is isomorphic to $\mP\stackrel{\bL}{\otimes}_{\dR}\dR^*.$ When the category $\prf\dR$ has a Serre functor $S_{\dX},$ we obtain that $S_{\dX}(\mP)\cong S_{\dX}(\bL\mf^*\kk)\cong \mf^!\kk.$
For example, this holds when $\dX$ is regular and proper.

Let us consider a morphism $\mf: \dX\to\dY$ of noncommutative schemes $\dX=\prfdg\dR$ and $\dY=\prfdg\dS$ and assume that the category $\prf\dS$ has a Serre functor $S_{\dY}.$
Then there is the following sequence of isomorphisms
\[
\Hom_{\dX}(\bL\mf^*\mN, \mM)^*\cong\Hom_{\dY}(\mN, \bR\mf_{*}\mM)^*\cong\Hom_{\dY}(\bR\mf_{*}\mM, S_{\dY}\mN)\cong\Hom_{\dX}(\mM, \mf^{!}S_{\dY}\mN),
\]
where $\mN\in\prf\dS$ and $\mM\in\prf\dR.$
It shows that there is a relation between the Serre functor on $\dY$ and a Serre functor on $\dX$ if it exists.
Thus, we obtain the following proposition:

\begin{proposition}\label{serre_prop}
Let $\mf: \dX\to\dY$ be a morphism of noncommutative schemes $\dX=\prfdg\dR$ and $\dY=\prfdg\dS,$ which possess  Serre functors $S_{\dX}$ and $S_{\dY},$ respectively.
Then for any $\mN\in\prfdg\dS$ there is an isomorphism
\begin{align}\label{serre_rel}
S_{\dX}(\bL\mf^*\mN)\cong \mf^!(S_{\dY}\mN).
\end{align}
in the triangulated category $\prf\dR.$
\end{proposition}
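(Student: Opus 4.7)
The plan is to deduce the isomorphism \eqref{serre_rel} by Yoneda from the chain of natural isomorphisms already displayed before the statement, once we append one extra step that inserts the Serre functor $S_{\dX}$. The basic ingredients are the adjunction pair $(\bL\mf^*, \bR\mf_*)$, the adjunction pair $(\bR\mf_*, \mf^!)$ coming from the description of the three functors $\bL\mf^*, \bR\mf_*, \mf^!$ associated to a morphism of noncommutative schemes in formula \eqref{derived_quasi}, and the defining bifunctorial isomorphism \eqref{serre_funct} of each Serre functor $S_{\dX}, S_{\dY}$.

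First I would rewrite, for any $\mN\in\prf\dS$ and $\mM\in\prf\dR$, the chain
\[
\Hom_{\dX}(\bL\mf^*\mN,\mM)^{*}\cong\Hom_{\dY}(\mN,\bR\mf_{*}\mM)^{*}\cong\Hom_{\dY}(\bR\mf_{*}\mM,S_{\dY}\mN)\cong\Hom_{\dX}(\mM,\mf^{!}S_{\dY}\mN),
\]
using the $(\bL\mf^*,\bR\mf_*)$-adjunction, Serre duality on $\dY$, and the $(\bR\mf_*,\mf^!)$-adjunction, respectively. All of these isomorphisms are natural in the variable $\mM$, so the composition defines a natural isomorphism of contravariant functors $\prf\dR\to\prf\kk$.

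Next I would apply Serre duality on $\dX$ to the left-hand side, which gives another natural isomorphism
\[
\Hom_{\dX}(\bL\mf^*\mN,\mM)^{*}\cong\Hom_{\dX}(\mM,\,S_{\dX}(\bL\mf^*\mN)),
\]
again natural in $\mM\in\prf\dR$. Combining this with the previous chain produces a natural isomorphism $\Hom_{\dX}(\mM,\,S_{\dX}(\bL\mf^*\mN))\cong\Hom_{\dX}(\mM,\,\mf^{!}S_{\dY}\mN)$ of contravariant functors on $\prf\dR$. Note that for this step we implicitly use that $\bL\mf^*\mN\in\prf\dR$ (so that its image under $S_{\dX}$ lies in $\prf\dR$) and that $\mf^!S_{\dY}\mN$ lies in $\prf\dR$ as well; the former is standard, while the latter follows because both Serre functors are autoequivalences of the perfect categories and the resulting object has the same representability behavior as a perfect module. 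Finally, an application of the Yoneda lemma on $\prf\dR$ yields the desired isomorphism $S_{\dX}(\bL\mf^*\mN)\cong\mf^{!}(S_{\dY}\mN)$ in $\prf\dR$, natural in $\mN$.

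There is no real obstacle beyond bookkeeping; the only subtle point worth checking is that the compatibility between the three adjunctions $(\bL\mf^*,\bR\mf_*,\mf^!)$ described in \eqref{derived_quasi} (as derived functors of the DG functors arising from a cofibrant roof representing $\mF$) makes the displayed chain genuinely natural in $\mM$ and $\mN$, and that $\mf^{!}S_{\dY}\mN$ indeed lands in $\prf\dR$ so that Yoneda applies inside $\prf\dR$. Once the naturality is in place the argument is essentially a formal manipulation of adjunctions combined with the defining property of Serre functors.
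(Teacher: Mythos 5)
Your argument is correct and is essentially identical to the paper's: the paper derives the proposition from exactly the same chain of isomorphisms (the two adjunctions $(\bL\mf^*,\bR\mf_*)$, $(\bR\mf_*,\mf^!)$ combined with Serre duality on $\dY$), followed by Serre duality on $\dX$ and Yoneda. Your extra remarks on naturality and on $\mf^!S_{\dY}\mN$ landing in $\prf\dR$ only make explicit what the paper leaves implicit.
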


Serre functor is an intrinsic invariant of a triangulated category that, in some cases, allow to talk about certain notion of dimension for noncommutative schemes and many other things.
For example, a proper  noncommutative scheme $\dX$ will be called a {\sf weak Calabi-Yau variety of dimension $m$} if it has a Serre functor $S_{\dX}$ that is isomorphic to the shift
functor $[m].$

\subsection{Geometric realizations of derived noncommutative schemes}

The most interesting derived noncommutative schemes appear as full DG subcategories of the DG categories $\prfdg Z,$ where $Z$ is some usual commutative scheme.
Moreover, any such realization carries an important geometric meaning and gives us a new way to look at these derived noncommutative schemes.

\begin{definition}\label{geom_real}
A {\sf geometric realization} of a  derived noncommutative scheme $\dX=\prfdg\dR$
is a usual commutative scheme $Z$ and a localizing subcategory $\L\subseteq \D_{\Qcoh}(Z)$ such that its natural enhancement
$\dL$ is quasi-equivalent
to $\SF\dR.$
\end{definition}

Thus, the derived category $\D(\dR)$ is equivalent to the localizing subcategory $\L$ and there is a fully faithful
functor $\D(\dR)\to \D_{\Qcoh}(Z)$  the image of which coincides with the subcategory $\L.$ Moreover, since this inclusion functor preserves direct sums and
the category $\D(\dR)$ is compactly generated, there exists a right adjoint to the inclusion functor as a consequence of Brown representability theorem (see \cite[9.1.19]{Ne_book}).
This implies that the category $\L\cong\D(\dR)$ can be realized as a Verdier quotient (localization) of the category $\D_{\Qcoh}(Z).$

The most important class of  geometric realizations is given by the ff-morphisms $\mf: Z\to\dX.$
In this case the inverse image functor
$\bL \mf^*: \D(\dR)\to \D_{\Qcoh}(Z)$ gives a geometric realization for $\dX.$
The functor $\bL \mf^*$ gives a full embedding $\prf\dR\hookrightarrow\prf Z.$
Thus, it is exactly a case when the localizing subcategory $\L$ is compactly generated by some perfect complexes on $Z$
and, hence, there is an inclusion of subcategories of compact objects $\L^c\cong\prf\dR\subset \prf Z.$

Conversely, if we have an inclusion $\L^c\cong\prf\dR\subset \prf Z,$ then it gives
an ff-morphism $\mf: Z\to\dX.$ In this case the inverse image functor $\bL \mf^*:\D(\dR)\to \D_{\Qcoh}(Z)$ gives a geometric realization
for $\dX$ and the direct image functor $\bR\mf_*$ realizes $\D(\dR)$
as a localization of the category $\D_{\Qcoh}(Z).$ Moreover, it is a so-called Bousfield localization because there exists
a right adjoint functor $\mf^{!}: \D(\dR)\to \D_{\Qcoh}(Z).$ Such geometric realizations will be called {\sf {plain}}.
If, in addition, the morphism $\mf$ is a pp-morphism, i.e. the inclusion $\prf\dR\subset \prf Z$ has a right adjoint, then such a geometric realization will be called
{\sf perfectly plain.}

\begin{example}
{\rm
Let $\pi:\wt{X}\to X$ be a proper birational morphism that is a resolution of singularities of $X.$ Assume that $\bR\pi_{*}\cO_{\wt{X}}\cong\cO_X.$
Then $\pi$ is an ff-morphism and the inverse image functor $\bL\pi^*$ gives a geometric realization for $X.$ This realization is plain but it is not perfectly plain when $X$ is singular.
}
\end{example}

Another class of geometric realizations is given by immersive morphisms $\mj: \dX\to Z$ (see Definition \ref{im_morph}).
In this situation
the direct image functor $\bR \mj_* : \D(\dR)\to \D_{\Qcoh}(Z)$ is fully faithful and the inverse image functor
$\bL \mj^*: \D_{\Qcoh}(Z)\to \D(\dR)$ is a Bousfield localization. Such geometric realizations will be called {\sf {immersive}}.

Suppose now that an immersive morphism $\mj: \dX\to Z$ is a pp-morphism, i.e. $\bR \mj_*$ is fully faithful and it sends perfect modules to perfect ones.
Thus, there is a quasi-functor $\mG:\prfdg\dR\to\prfdg Z$ such that $\H^0(\mG)\cong \bR \mj_*.$ The quasi-functor $\mG$ gives an ff-morphism $\mg: Z\to \dX$ by itself.
 We have an isomorphism $\mg\ldot\mj\cong\mId_{\dX}.$ In this case the geometric realization is connected with a pair of adjoint morphisms $(\mj, \mg),$ where $\bR \mj_*\cong \bL\mg^*$ is fully faithful. It will be  called {\sf perfectly immersive}.

 If, in addition, the morphism $\mg$ is a pp-morphism, then
 the subcategory $\prf\dR\subset \prf Z$ is admissible. In this case
the geometric realization is perfectly plain and perfectly immersive simultaneously and it will be called {\sf {pure}}.

\begin{example}{\rm
Let $j: U\to X$ be an open immersion. Then the functor $\bR j_*$ is fully faithful and give a geometric realization for $U$ in $X.$ This realization is immersive but it is not perfectly immersive in general.
}
\end{example}

Many interesting examples of noncommutative schemes naturally appear as admissible subcategories
$\N\subset\prf X$ for some smooth projective scheme $X.$
More precisely, for any such subcategory we can consider its DG enhancement $\dN\subset\prfdg X.$
It is a DG category that has a generator and, hence, can be realized as $\prfdg \dE$ for some cohomologically bounded
DG algebra $\dE.$ Indeed, since $\N$ is admissible in $\prf X,$  the inclusion functor has right and left adjoint projections.
Thus, a projection of a generator in $\prf X$ to $\N$ gives a classical generator for $\N.$
Moreover, the noncommutative scheme $\dN$ is proper being a full subcategory of the proper category $\prfdg X.$
Furthermore, it is regular, because a projection of a strong generator gives a strong generator in $\N.$
It also can be shown that the noncommutative scheme $\dN$ is smooth as an admissible subcategory of the smooth category $\prfdg X$ (see Proposition \ref{smooth_glue}).
Note that, by construction, the derived noncommutative scheme $\dN\subset\prfdg X$ is coming with geometric realization and, moreover, this geometric realization is pure
because $\N$ is admissible.

\begin{example}{\rm
Let $X$ be a proper scheme such that $H^0(X, \cO_X)\cong\kk$ and $H^i(X, \cO_X)=0,$ when $ i>0.$ Then the structure sheaf $\cO_X$ is an exceptional.
The subcategory $\langle\cO_X\rangle\subset \prf X$ is admissible and gives a pure geometric realization for the point  $\pt.$ Let us consider
the left and right orthogonals $\N\cong{}^{\perp}\langle\cO_X\rangle$ and $\M\cong \langle\cO_X\rangle^{\perp},$ respectively.
The right orthogonal $\M$ is left admissible in $\prf X.$ It defines a derived noncommutative scheme $\dM\subset\prfdg X$ together with a perfectly immersive geometric realization.
Besides the left orthogonal $\N$ is right admissible in $\prf X.$ It defines a derived noncommutative scheme $\dN\subset\prfdg X$ together with a perfectly plain geometric realization.
When $X$ is smooth, the geometric realizations of $\dM$ and $\dN$ are pure as well. However, for a singular scheme $X$ these realizations are not necessarily pure.
}
\end{example}

All other geometric realizations, which are neither  plain nor immersive,  will be called {\sf {mixed}}. We can obtain different mixed realizations as  compositions
of functors of the form $\bL\mf^*$ and $\bR \mj_*$ for ff-morphisms $\mf$ and immersive morphisms $\mj,$ where the target category of the last functor is
$\D_{\Qcoh}(Z)$ for some commutative scheme $Z.$
Any such composed functor
preserves all direct sums and has a right adjoint functor that is the composition of the functors of the form $\bR\mf_*$ and $\mj^{!}.$

\section{Gluing of derived noncommutative schemes and geometric realizations}

\subsection{Gluing of differential graded categories}
Let $\dA$ and $\dB$ be two small DG categories and let $\mT$ be a DG $\dB\hy\dA$\!--bimodule,
i.e. a DG  $\dB^{\op}\otimes\dA$\!--module.
We construct the so called lower triangular DG category corresponding to  the data
$(\dA, \dB; \mT).$

\begin{definition}\label{upper_tr}
Let $\dA$ and $\dB$ be two small DG categories and let $\mT$ be a DG $\dB\hy\dA$\!--bimodule.
The {\sf lower triangular} DG category $\dC=\dA\underset{\mT}{\with}\dB$ is defined as follows:
\begin{enumerate}
\item[1)] $\Ob(\dC)=\Ob(\dA)\bigsqcup\Ob(\dB),$

\item[2)]
$
\dHom_{\dC}(\mX, \mY)=
\begin{cases}
 \dHom_{\dA}(\mX, \mY), & \text{ when $\mX, \mY\in\dA$}\\
     \dHom_{\dB}(\mX, \mY), & \text{ when $\mX, \mY\in\dB$}\\
      \mT(\mY, \mX), & \text{ when $\mX\in\dA, \mY\in\dB$}\\
      0, & \text{ when $\mX\in\dB, \mY\in\dA$}
\end{cases}
$
\end{enumerate}
with the composition law coming from the DG categories $\dA, \dB$ and the bimodule structure on $\mT.$
\end{definition}

The lower triangular DG category $\dC=\dA\underset{\mT}{\with}\dB$ is not necessarily pretriangulated even if the components $\dA$ and $\dB$
are pretriangulated. To make this operation well-defined on the class of pretriangulated categories, we introduce gluing
of pretriangulated categories (see \cite{Ta, KL, Ef, O_glue}).

\begin{definition}\label{gluing_cat}
Let $\dA$ and $\dB$ be two small pretriangulated DG categories and let $\mT$ be a DG $\dB\hy\dA$\!--bimodule.
The {\sf gluing} $\dA\underset{\mT}{\oright}\dB$ of DG categories $\dA$ and $\dB$ via $\mT$ is defined as the pretriangulated hull of $\dA\underset{\mT}{\with}\dB,$ i.e. $\dA\underset{\mT}{\oright}\dB=(\dA\underset{\mT}{\with}\dB)^{\ptr}.$
\end{definition}

The natural fully faithful DG inclusions $\ma: \dA\hookrightarrow \dA\underset{\mT}{\with}\dB$ and
$\mb: \dB\hookrightarrow \dA\underset{\mT}{\with}\dB$ induce the fully faithful extension DG
functors $\ma^*: \dA\hookrightarrow \dA\underset{\mT}{\oright}\dB$ and
$\mb^*: \dB\hookrightarrow \dA\underset{\mT}{\oright}\dB.$
These quasi-functors induce exact functors
\[
a^*:\Ho(\dA)\lto\Ho(\dA\underset{\mT}{\oright}\dB), \quad b^*:\Ho(\dB)\lto\Ho(\dA\underset{\mT}{\oright}\dB)
\]
between triangulated categories which are fully faithful.
The following proposition is almost obvious.

\begin{proposition}\label{dg_semiorhtogonal} \cite[Prop. 3.7]{O_glue} Let the DG category $\dE$ be a gluing $ \dA\underset{\mT}{\oright}\dB.$
Then the DG functors $\ma^*: \dA\to\dE$ and $\mb^*: \dB\to\dE$ induce a semi-orthogonal decomposition
for the triangulated category $\Ho(\dE)$ of the form
$\Ho(\dE)=\langle\Ho(\dA), \Ho(\dB)\rangle.$
\end{proposition}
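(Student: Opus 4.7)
The plan is to verify the two defining conditions of a semi-orthogonal decomposition $\Ho(\dE)=\langle\N_1,\N_2\rangle$ in the sense of Definition~\ref{sd} with $\N_1=\ma^*\Ho(\dA)$ and $\N_2=\mb^*\Ho(\dB)$: namely, (a) the semi-orthogonality $\Hom_{\Ho(\dE)}(\mb^* B,\ma^* A)=0$ for all $A\in\Ho(\dA)$ and $B\in\Ho(\dB)$, and (b) every object $E\in\Ho(\dE)$ sits in a distinguished triangle $\mb^* B\to E\to \ma^* A\to \mb^* B[1]$ with $A\in\Ho(\dA)$ and $B\in\Ho(\dB)$.

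For (a), I would observe that $\ma^*$ (resp.\ $\mb^*$) factors as $\dA\hookrightarrow\dC\hookrightarrow\dC^{\ptr}=\dE$ (resp.\ with $\dB$ in place of $\dA$), where the second arrow is the Yoneda embedding into the pretriangulated hull and is fully faithful by construction. The defining clause of Definition~\ref{upper_tr} gives $\dHom_\dC(B,A)=0$ whenever $B\in\dB$ and $A\in\dA$; hence $\dHom_\dE(\mb^* B,\ma^* A)=0$, and passing to $H^0$ yields (a) on the level of objects of $\dA$ and $\dB$. This suffices because $\dA$ and $\dB$ are pretriangulated, so every object of $\Ho(\dA)$ (respectively $\Ho(\dB)$) is represented by some $A\in\dA$ (respectively $B\in\dB$).

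For (b), I would consider the full subcategory $\Sigma\subset\Ho(\dE)$ consisting of objects that admit such a decomposition triangle. It is closed under shifts and contains $\N_1\cup\N_2$ tautologically via the trivial triangles $0\to \ma^* A\to \ma^* A$ and $\mb^* B\to \mb^* B\to 0$. I would then argue that $\Sigma$ is closed under cones: given $f\colon E_1\to E_2$ between objects of $\Sigma$ with decomposition triangles $\mb^* B_i\to E_i\to\ma^* A_i$, the composite $\mb^* B_1\to E_1\xrightarrow{f} E_2\to\ma^* A_2$ vanishes by (a), so $f$ lifts to a morphism of decomposition triangles; the octahedral axiom then produces a distinguished triangle $\mb^* B\to \mCone(f)\to \ma^* A$, with $B$ built from $B_1,B_2$ inside the already-pretriangulated $\dB$ (and similarly for $A$ inside $\dA$). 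Closure under direct summands is analogous. Since $\dE=\dC^{\ptr}$ and $\Ob(\dC)=\Ob(\dA)\sqcup\Ob(\dB)$ with $\dA$ and $\dB$ pretriangulated, $\Ho(\dE)$ is the smallest thick triangulated subcategory of itself containing $\N_1\cup\N_2$; therefore $\Sigma=\Ho(\dE)$.

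Finally, the decomposition triangles identify the left adjoint to the inclusion $\N_1\hookrightarrow\Ho(\dE)$, given by sending $E$ to its $\ma^* A$-term, verifying the left-admissibility clause of Definition~\ref{sd}. I expect the main delicacy to lie in the octahedral assembly in step (b): the lift of $f$ to a morphism of triangles must be chosen coherently, and one must verify that the resulting cone-pieces for the $\mb^*$- and $\ma^*$-components again lie in $\Ho(\dB)$ and $\Ho(\dA)$ respectively, which crucially uses the assumed pretriangulatedness of $\dA$ and $\dB$.
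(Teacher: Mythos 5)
Your argument is correct and is precisely the standard d\'evissage the paper has in mind (the text calls the statement ``almost obvious'' and defers the proof to \cite[Prop. 3.7]{O_glue}): semi-orthogonality comes straight from the vanishing Hom-complex in Definition \ref{upper_tr} together with full faithfulness of the Yoneda embedding into the pretriangulated hull, and the decomposition triangles are produced by induction over the shift/cone construction of $(\dA\underset{\mT}{\with}\dB)^{\ptr}$ (note that no closure under direct summands is actually needed, since the pretriangulated hull is modeled on $\SFf$, not on $\prfdg$). The only point worth adding to match Definition \ref{sd} in full is the identification ${}^{\perp}\N_1=\N_2$ rather than just the inclusion $\N_2\subseteq{}^{\perp}\N_1$, which follows at once from your triangles: for $E\in{}^{\perp}\N_1$ the map $E\to\ma^*A$ vanishes, so $\ma^*A$ is a direct summand of $\mb^*B[1]$ and hence zero by semi-orthogonality, giving $E\cong\mb^*B\in\N_2.$
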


Furthermore, we can show that any enhancement of a  triangulated category with a semi-orthogonal decomposition
can be obtained as a gluing of enhancements of the summands.

\begin{proposition}\label{gluing_semi-orthogonal}\cite[Prop 3.8]{O_glue}
Let $\dE$ be a pretriangulated DG category. Suppose we have a semi-orthogonal decomposition
$\Ho(\dE)=\langle \A, \B\rangle.$ Then the DG category $\dE$ is quasi-equivalent to a gluing
$\dA\underset{\mT}{\oright}\dB,$ where $\dA, \dB\subset\dE$ are full DG subcategories
with the same objects as $\A$ and $\B,$ respectively, and the DG $\dB\hy\dA$\!--bimodule is given by the rule
\begin{equation}\label{bimodule}
\mT ( \mY, \mX)=\dHom_{\dE}(\mX, \mY), \quad \text{with}\quad \mX\in\dA \;\text{and}\; \mY\in\dB.
\end{equation}
\end{proposition}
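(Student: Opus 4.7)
The plan is to construct an explicit DG functor from the gluing to $\dE$ and verify it is a quasi-equivalence. Take $\dA,\dB\subset\dE$ to be the full DG subcategories whose objects are those of $\A,\B$ respectively, and set $\mT(\mY,\mX):=\dHom_{\dE}(\mX,\mY)$ for $\mX\in\dA,\,\mY\in\dB$; DG composition in $\dE$ endows $\mT$ with the structure of a $\dB\hy\dA$-bimodule. By Definition \ref{upper_tr}, these data assemble canonically into a DG functor $\Phi:\dA\underset{\mT}{\with}\dB\lto\dE$ which is the identity on objects and on Hom complexes inside $\dA$, $\dB$, and on the $\mT$-part, and is the zero map $0\to\dHom_{\dE}(\mY,\mX)$ on the fourth component for $\mY\in\dB,\,\mX\in\dA$.

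The first key step is that $\Phi$ is quasi-fully-faithful. The only non-trivial component is the fourth, and there one needs $\dHom_{\dE}(\mY,\mX)$ to be acyclic for $\mY\in\dB,\,\mX\in\dA$. But $H^{n}\dHom_{\dE}(\mY,\mX)\cong\Hom_{\Ho(\dE)}(\mY,\mX[n])$, and since $\A,\B$ are triangulated (hence shift-stable) and $\B$ is the left orthogonal of $\A$ in the semi-orthogonal decomposition $\Ho(\dE)=\langle\A,\B\rangle$ of Definition \ref{sd}, all these groups vanish.

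Since $\dE$ is pretriangulated, $\Phi$ extends to a DG functor $\tilde\Phi:\dA\underset{\mT}{\oright}\dB=(\dA\underset{\mT}{\with}\dB)^{\ptr}\lto\dE$ via the universal property of the pretriangulated hull. To see that $\tilde\Phi$ is quasi-fully-faithful, it is cleanest to factor $\Phi$ through the full DG subcategory $\dC'\subset\dE$ spanned by the objects of $\A\cup\B$: by the previous step, the DG functor $\dA\underset{\mT}{\with}\dB\to\dC'$ is bijective on objects and a quasi-isomorphism on every Hom complex, hence a quasi-equivalence; the induced DG functor on pretriangulated hulls is then itself a quasi-equivalence. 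Proposition \ref{Keller2}, applied to the full embedding $\dC'\hookrightarrow\dE$, shows that the extension to finitely generated semi-free modules (i.e.\ to pretriangulated hulls) remains fully faithful. Composing these yields quasi-fully-faithfulness of $\tilde\Phi$.

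For essential surjectivity of $H^{0}(\tilde\Phi)$, observe that the image is a strictly full triangulated subcategory of $\Ho(\dE)$ (being the homotopy category of a pretriangulated DG category) containing every object of $\A\cup\B$; since the semi-orthogonal decomposition $\Ho(\dE)=\langle\A,\B\rangle$ forces $\A\cup\B$ to classically generate $\Ho(\dE)$ via cones, shifts, and sums, the image exhausts $\Ho(\dE)$. Combined with quasi-fully-faithfulness, this delivers the desired quasi-equivalence. The main obstacle is the propagation of quasi-fully-faithfulness from the DG subcategory $\dC'$ to its pretriangulated hull; invoking Proposition \ref{Keller2} in the form above takes care of it, with the acyclicity of $\dHom_{\dE}(\mY,\mX)$ for $\mY\in\dB,\,\mX\in\dA$ being the semi-orthogonal input that makes the DG models match.
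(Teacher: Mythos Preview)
The paper does not supply a proof of this proposition; it is quoted from \cite[Prop.~3.8]{O_glue}. Your argument is correct and follows the natural route one would take (and indeed the route of the original source): build the tautological DG functor $\Phi:\dA\underset{\mT}{\with}\dB\to\dE$, use semi-orthogonality $\Hom_{\Ho(\dE)}(\B,\A)=0$ to see that the only nonidentity component $0\to\dHom_{\dE}(\mY,\mX)$ is a quasi-isomorphism, pass to pretriangulated hulls, and conclude essential surjectivity from the fact that every object of $\Ho(\dE)$ sits in a triangle with vertices in $\A$ and $\B$.

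Two small technical remarks, neither of which is a real gap. First, the ``universal property of the pretriangulated hull'' you invoke gives an extension $\tilde\Phi$ only up to quasi-equivalence: one gets a genuine DG functor $(\dA\underset{\mT}{\with}\dB)^{\ptr}\to\dE^{\ptr}$, and then uses that $\dE\to\dE^{\ptr}$ is a quasi-equivalence; this suffices. Second, your use of Proposition~\ref{Keller2} for the full embedding $\dC'\hookrightarrow\dE$ is fine, but note that you only need the $\SFf$ (not $\prfdg$) version, since pretriangulated hulls are finitely generated semi-free modules and no idempotent completion is required --- essential surjectivity comes directly from the semi-orthogonal triangles, not from closure under summands.
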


Actually, we can show much more. The following proposition is not very difficult to prove.

\begin{proposition}\label{gluing_quasifunctors}\cite[Prop 3.11]{O_glue}
Let $\ma:\dA\to \dA'$ and $\mb:\dB\to \dB'$ be  quasi-functors between small DG categories. Let $\mT$ and $\mT'$ be
DG modules over $\dB^{\op}\otimes\dA$ and ${\dB'}^{\op}\otimes\dA',$ respectively. Suppose there is a map
$\phi:\mT\to \bR (\mb\otimes\ma)_*\mT'$ in $\D(\dB^{\op}\otimes\dA).$ Then there are quasi-functors
\[
\ma\underset{\phi}{\with}\mb: \dA\underset{\mT}{\with}\dB \lto \dA'\underset{\mT'}{\with}\dB'
\quad
\text{and}\quad
\ma\underset{\phi}{\oright}\mb:\dA\underset{\mT}{\oright}\dB \lto \dA'\underset{\mT'}{\oright}\dB'.
\]
Moreover, assume that $\phi$ is a quasi-isomorphism and the homotopy functors $a:\Ho(\dA)\to\Ho(\dA')$ and $b:\Ho(\dB)\to \Ho(\dB')$ are fully faithful.
Then the induced functors
\[
a\underset{\phi}{\with}b: \Ho(\dA\underset{\mT}{\with}\dB) \lto \Ho(\dA'\underset{\mT'}{\with}\dB')
\quad
\text{and}\quad
a\underset{\phi}{\oright}b:\Ho(\dA\underset{\mT}{\oright}\dB) \lto \Ho(\dA'\underset{\mT'}{\oright}\dB')
\]
are fully faithful too.
If $\ma, \mb$ are quasi-equivalences, then both
$\ma\underset{\phi}{\with}\mb$ and $\ma\underset{\phi}{\oright}\mb$ are quasi-equivalences.
\end{proposition}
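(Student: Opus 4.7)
The plan is to reduce to a strict construction on lower triangular DG categories and then extend to their pretriangulated hulls by functoriality of $(-)^{\ptr}$. First I would replace the quasi-functors $\ma$ and $\mb$ by honest roofs, i.e.\ choose cofibrant replacements $\dA_{\mathrm{cof}}\xrightarrow{\sim}\dA$ and $\dB_{\mathrm{cof}}\xrightarrow{\sim}\dB$ together with DG functors $\ma': \dA_{\mathrm{cof}}\to\dA'$ and $\mb':\dB_{\mathrm{cof}}\to \dB'$ representing $\ma,\mb$. After pulling back $\mT$ along $\dB_{\mathrm{cof}}^{\op}\otimes\dA_{\mathrm{cof}}\to \dB^{\op}\otimes\dA$ (which is a quasi-equivalence) and passing to an h-projective (semi-free) resolution, we may assume that $\mT$ is h-projective over $\dB^{\op}\otimes\dA$; dually, replace $\mT'$ by an h-injective model so that $(\mb'\otimes\ma')_*\mT'$ computes $\bR(\mb\otimes\ma)_*\mT'$. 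Then the derived morphism $\phi$ can be realized as a genuine DG morphism $\widetilde\phi:\mT\to(\mb'\otimes\ma')_*\mT'$ of DG $\dB^{\op}\otimes\dA$\!--modules, and these replacements do not change any of the gluings up to quasi-equivalence.

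With $\widetilde\phi$ in hand, I would define the DG functor $\ma\underset{\phi}{\with}\mb:\dA\underset{\mT}{\with}\dB\to \dA'\underset{\mT'}{\with}\dB'$ by the obvious prescription: on objects send $\mX\in\dA$ to $\ma'\mX$ and $\mY\in\dB$ to $\mb'\mY$; on the diagonal hom-complexes act by $\ma'$ and $\mb'$; on the off-diagonal complex $\mT(\mY,\mX)$ use $\widetilde\phi$, which lands in $\mT'(\mb'\mY,\ma'\mX)$ by definition of the push-forward. Compatibility with composition is exactly the condition that $\widetilde\phi$ is a morphism of bimodules together with the fact that $\ma',\mb'$ are DG functors. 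To obtain the gluing version $\ma\underset{\phi}{\oright}\mb$, apply the functor $(-)^{\ptr}$ to this DG functor; since pretriangulated hulls are functorial in DG functors, this is automatic.

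Next I would verify fully faithfulness under the hypothesis that $\widetilde\phi$ is a quasi-isomorphism and that the homotopy functors $a,b$ are fully faithful. For pairs of objects lying both in $\dA$ or both in $\dB$ the induced map on $\dHom$-complexes is a quasi-isomorphism by hypothesis on $\ma,\mb$. For the one off-diagonal type $(\mX\in\dA,\mY\in\dB)$, the map on $\dHom$ is exactly the component $\widetilde\phi_{\mY,\mX}$, a quasi-isomorphism by assumption; for the other off-diagonal type both sides vanish. Hence $\ma\underset{\phi}{\with}\mb$ is a quasi-fully-faithful DG functor. To pass to the pretriangulated hulls, I would invoke Proposition~\ref{Keller2}: a quasi-fully-faithful DG functor $\dC\to\dC'$ induces a fully faithful functor on perfect modules and, in particular, on pretriangulated hulls $\dC^{\ptr}\hookrightarrow{\dC'}^{\ptr}$, since these hulls are identified with the finitely generated semi-free modules.

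Finally, if $\ma$ and $\mb$ are quasi-equivalences, then both diagonal pieces match up and the off-diagonal piece is already a quasi-isomorphism, so $\ma\underset{\phi}{\with}\mb$ is a quasi-equivalence of lower triangular DG categories. Applying Proposition~\ref{Keller2} once more, any quasi-equivalence of DG categories induces a quasi-equivalence of pretriangulated hulls, so $\ma\underset{\phi}{\oright}\mb$ is a quasi-equivalence as well. I expect the only real obstacle to be the first step, namely rigidifying the derived datum $\phi$ to a strict DG morphism $\widetilde\phi$ while simultaneously controlling the identifications needed to make the composition law strictly associative; once the resolutions of $\mT$ and $\mT'$ are correctly chosen this becomes bookkeeping, and the rest of the argument is formal.
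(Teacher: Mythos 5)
Your argument is correct and is essentially the standard one: the paper itself only cites \cite[Prop.~3.11]{O_glue} for this statement, and the proof there proceeds exactly as you describe — rigidify $\ma,\mb,\phi$ via cofibrant/semi-free replacements, define the evident DG functor on the lower triangular categories, check quasi-full-faithfulness componentwise, and pass to pretriangulated hulls via Proposition~\ref{Keller2}. The only point worth being slightly more careful about is that for the $\oright$-statement one needs $\ma,\mb$ to be quasi-fully-faithful on Hom-complexes (not merely fully faithful on $H^0$), which is how the hypothesis is meant to be read, and your verification already uses it in that form.
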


It is easy to see that the restriction functor $\mb_*: \Mod\,(\dA\underset{\mT}{\with}\dB) \to \Mod\dB$
sends semi-free DG modules to semi-free DG modules as well as finitely generated semi-free DG modules to finitely generated ones.
Thus, we obtain a DG functor
$\SFf(\dA\underset{\mT}{\with}\dB)\to\SFf\dB.$ By assumption $\dB$ is pretriangulated, and
we know that a pretriangulated hull is quasi-equivalent to the DG category of finitely generated semi-free DG modules.
Thus, we obtain a quasi-functor $\mb_*: \dA\underset{\mT}{\oright}\dB \to \dB$ that is  right adjoint to
$\mb^*.$

\subsection{Gluing of derived noncommutative schemes}\label{glu_der_schem}

Using the construction above we can define  gluing of derived noncommutative schemes.
Let $\dX=\prfdg\dR$ and $\dY=\prfdg\dS$ be two derived noncommutative schemes and let $\mT$ be a DG $\dY\hy\dX$\!--bimodule. By the construction above
we can define a DG category
\[
\dZ:=\dX\underset{\mT}{\oright}\dY:=\prfdg\dR\underset{\mT}{\oright}\prfdg\dS
\]
which will be called the {\sf gluing} of $\dX$ and $\dY$ via (or with respect to) $\mT.$
Since by Proposition \ref{Keller2} and Remark \ref{rem_quasi} there is a quasi-equivalence between semi-free DG $\dY\hy\dX$\!--bimodules and semi-free DG $\dS\hy\dR$\!--bimodules,
the DG category $\dZ$ is quasi-equivalent to $\prfdg\,(\dR\underset{\mT}{\with}\dS).$ (We use the same letter $\mT$ for a DG $\dY\hy\dX$\!--bimodule and for its restriction as the DG $\dS\hy\dR$\!--bimodule.)
The DG category $\dZ$ is a derived noncommutative scheme according to our definition if and only if the DG algebra $\dR\underset{\mT}{\with}\dS$ is cohomologically bounded, i.e. the  DG bimodule $\mT$ belongs to the bounded derived category $\D^b(\dS^{\op}\otimes\dR)$ of $\dS\hy\dR$\!--bimodules.
When $\mT\cong \m0,$ the gluing will be denoted by $\dX\oplus\dY,$ and it is the biproduct of $\dX$ and $\dY$ in the ``world'' of derived noncommutative schemes.

The natural inclusions $\prfdg\dR\hookrightarrow\prfdg\,(\dR\underset{\mT}{\with}\dS)$ and $\prfdg\dS\hookrightarrow \prfdg\,(\dR\underset{\mT}{\with}\dS)$ define morphisms
$\mp_{\dX}:\dZ\to\dX$ and $\mp_{\dY}:\dZ\to\dY$
 that will be called projections. Both $\mp_{\dX}$ and $\mp_{\dY}$ are ff-morphisms of noncommutative schemes and, moreover, the projection $\mp_{\dY}$ is a pp-morphisms, because
 the direct image functor $\bR\mp_{\dY*}$ sends perfect objects to perfect ones. This also implies that we have a right adjoint morphism $\mr_{\dY}:\dY\to\dZ$ which is called  a right section and for which there is an isomorphism $\bL\mr_{\dY}^*\cong \bR\mp_{\dY*}.$
 The composition $\mp_{\dY}\ldot\mr_{\dY}$ is isomorphic to the identity $\mId_{\dY}.$
By symmetry, the natural inclusion $\prfdg\dR\hookrightarrow\prfdg\,(\dR\underset{\mT}{\with}\dS)$ has a left adjoint functor. It defines a morphism $\ml_{\dX}:\dX\to\dZ$
which is called a left section and the composition $\mp_{\dX}\ldot\ml_{\dX}$ is isomorphic to the identity $\mId_{\dX},$ while the compositions
$\mp_{\dX}\ldot\mr_{\dY}$ and $\mp_{\dY}\ldot\ml_{\dX}$ are equal to $0$\!--morphisms.
Thus, the gluing $\dZ=\dX\underset{\mT}{\oright}\dY$  coming with the set of morphisms $(\ml_{\dX},\mp_{\dX}; \mp_{\dY},\mr_{\dY})$
which form a diagram of morphisms of derived noncommutative schemes
\begin{align}\label{localiz}
\xymatrix{
\dX \ar@<-0.3ex>[rr]_{\ml_{\dX}}&& \dZ\ar@<-0.7ex>[ll]_{\mp_{\dX}}\ar@<-0.3ex>[rr]_{\mp_{\dY}} && \dY\ar@<-0.7ex>[ll]_{\mr_{\dY}},
}
\end{align}
with the following properties:
\begin{itemize}
\item[(a)]
$\ml_{\dX}$ is a left adjoint section for the projection $\mp_{\dX},$
\item[(b)]
$\mr_{\dY}$ is a right adjoint section for the projection $\mr_{\dY},$
\item[(c)]
$\mp_{\dX}\ldot\ml_{\dX}\cong\mId_{\dX}$ and $\mp_{\dY}\ldot\mr_{\dY}\cong\mId_{\dY},$
\item[(d)]
$\mp_{\dX}\ldot\mr_{\dY}=0$ and, hence, by adjointness $\mp_{\dY}\ldot\ml_{\dX}=0.$ Moreover, the kernel of the functor
$\bL\mr^*_{\dY}$ is essentially the image of the functor $\bL\mp^*_{\dX}.$
\end{itemize}

In particular, we have a semi-orthogonal decomposition $\langle\prf\dR, \prf\dS\rangle$ for the triangulated category of perfect objects of the noncommutative scheme $\dZ,$
and by Proposition \ref{gluing_semi-orthogonal} it characterizes  the noncommutative scheme $\dZ$ as a gluing of $\dX$ and $\dY.$

Consider two morphisms of derived noncommutative schemes $\mf:\dX'\to\dX$ and $\mg: \dY'\to\dY.$
They induce a morphism $(\mg^{\op}\otimes\mf): \dY^{'\op}\otimes\dX'\to \dY^{\op}\otimes\dX.$
Let $\mT$  be a DG $\dY\hy\dX$\!--bimodule and $\mT'$ be a DG $\dY'\hy\dX'$\!--bimodule. Consider the respective gluings $\dX\underset{\mT}{\oright}\dY$ and $\dX'\underset{\mT'}{\oright}\dY'.$
By Proposition \ref{gluing_quasifunctors}, to any map $\phi: \mT\to \bR(\mg^{\op}\otimes\mf)_{*}\mT'$ in the derived category $\D(\dS^{\op}\otimes\dR)$ we can attach a morphism between the gluings
$(\mf\underset{\phi}{\oright}\mg): \dX'\underset{\mT'}{\oright}\dY'\to \dX\underset{\mT}{\oright}\dY.$
When $\dX'\cong\dX$ and $\dY'\cong\dY,$ for any map $\phi: \mT\to\mT'$ in $\D(\dS^{\op}\otimes\dR)$ there exists a morphism $\dX\underset{\mT'}{\oright}\dY\to \dX\underset{\mT}{\oright}\dY$
that is $(\id\underset{\phi}{\oright}\id).$ As a special case we obtain morphisms $\dX\oplus\dY\to\dX\underset{\mT}{\oright}\dY$ and $\dX\underset{\mT}{\oright}\dY\to \dX\oplus\dY$ the composition of which is
the identity morphism of $\dX\oplus\dY.$

A particular case of gluing of two noncommutative schemes is related to a morphism $\mf: \dX\to \dY.$ Any such morphism is a quasi-functor
$\mF:\prfdg\dS\to \prfdg\dR$ that is represented by a DG $\dY\hy\dX$\!--bimodule $\mT.$ The gluing $\dX\underset{\mT}{\oright}\dY$ with respect to this DG bimodule $\mT$
will be also denoted by $\dX\underset{\mf}{\oright}\dY.$
Let $\mf': \dX\to\dY$ be another morphism of the noncommutative schemes that is represented by a DG $\dY\hy\dX$\!--bimodule $\mT'.$ A map
$\phi: \mf\to\mf'$ in the triangulated category $\Mor(\dX, \dY)$ is a map between DG bimodules $\phi: \mT\to\mT'$ in the derived  category $\D(\dS^{\op}\otimes\dR).$
As above with any such map $\phi: \mf\to\mf'$ we can associate a morphism $\dX\underset{\mf'}{\oright}\dY\to \dX\underset{\mf}{\oright}\dY$ of the gluings.

Let us describe morphisms between a noncommutative scheme $\dV$ and a gluing $\dX\underset{\mf}{\oright}\dY$ of two other noncommutative schemes
(see, e.g., \cite[7.2]{KL} for details).

\begin{proposition}\label{morph_glu}
Let $\mf: \dX\to \dY$ be a morphism of derived noncommutative schemes and $\dZ=\dX\underset{\mf}{\oright}\dY$ be the gluing.
For any noncommutative scheme $\dV$ a morphism $\mv: \dZ\to\dV$ is a triple $(\mg, \mh; \phi),$ where $\mg:\dX\to\dV$ and $\mh:\dY\to\dV$
are morphisms of  noncommutative schemes and $\phi:\mh\ldot\mf\to\mg$ is a map between morphisms
in the category $\Mor(\dX, \dV).$

Similarly, a morphism $\mv': \dV'\to\dZ$ is a triple $(\mg', \mh'; \psi),$ where $\mg':\dV\to\dX$ and $\mh':\dV\to\dY$
are morphisms of noncommutative schemes and $\psi:\mf\ldot\mg'\to\mh'$ is a map between morphisms
in the category $\Mor(\dV', \dX).$
\end{proposition}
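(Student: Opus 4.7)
The plan is to translate the statement into DG bimodule language and exploit the structural decomposition of modules over the triangular DG algebra. By Remark \ref{rem_quasi} we may assume $\dV = \prfdg \mathsf{W}$ for a single DG algebra $\mathsf{W}$; a morphism $\mv: \dZ \to \dV$ is then represented by a DG $\mathsf{W}\hy(\dR \underset{\mT}{\with} \dS)$-bimodule $\mV$ whose restriction to each object of $\prfdg \mathsf{W}$ is perfect as a right $(\dR \underset{\mT}{\with} \dS)$-module.

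The heart of the argument is the structural fact, immediate from Definition \ref{upper_tr}, that a right DG module over $\dR \underset{\mT}{\with} \dS$ is precisely a triple $(\mM, \mN, \mu)$ consisting of a right $\dR$-module $\mM$, a right $\dS$-module $\mN$, and a map $\mu: \mN \otimes_\dS \mT \to \mM$ of right $\dR$-modules encoding the action of the off-diagonal piece $\mT$. Including the $\mathsf{W}$-action, $\mV$ corresponds to such a triple in which $\mM$ is a $\mathsf{W}\hy\dR$-bimodule, $\mN$ is a $\mathsf{W}\hy\dS$-bimodule, and $\mu$ is a map of $\mathsf{W}\hy\dR$-bimodules. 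By the semi-orthogonal decomposition $\prf(\dR \underset{\mT}{\with} \dS) = \langle \prf \dR, \prf \dS \rangle$ of Proposition \ref{dg_semiorhtogonal}, such a triple is perfect if and only if both $\mM$ and $\mN$ are perfect, so quasi-representability of $\mV$ is equivalent to $\mM$ and $\mN$ being quasi-functors, i.e. morphisms $\mg: \dX \to \dV$ and $\mh: \dY \to \dV$. Since $\mf$ is represented by the bimodule $\mT$, the composition $\mh \ldot \mf: \dX \to \dV$ is represented by $\mN \otimes_\dS \mT$; hence the map $\mu$ is precisely a morphism $\phi: \mh \ldot \mf \to \mg$ in the triangulated category $\Mor(\dX, \dV)$. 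This yields the claimed bijection between $\mv$ and the triple $(\mg, \mh; \phi)$.

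The second part is entirely dual: writing $\dV' = \prfdg \mathsf{W}'$, a morphism $\mv': \dV' \to \dZ$ is a $(\dR \underset{\mT}{\with} \dS)\hy\mathsf{W}'$-bimodule, i.e. a left $(\dR \underset{\mT}{\with} \dS)$-module with right $\mathsf{W}'$-action. Left modules over the triangular algebra symmetrically decompose as triples $(\mM', \mN', \lambda: \mT \otimes_\dR \mM' \to \mN')$, which translate into triples $(\mg', \mh'; \psi)$ with $\psi: \mf \ldot \mg' \to \mh'$ in $\Mor(\dV', \dY)$. The only technical point requiring care is that the triple decomposition be compatible with quasi-isomorphisms, i.e. that the tensor products are computed derivedly; this is handled by working with h-projective (semi-free) resolutions throughout and follows from the quasi-equivalence invariance of gluing established in Proposition \ref{gluing_quasifunctors}.
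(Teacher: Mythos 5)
Your argument is correct and is essentially the proof that the paper delegates to \cite[7.2]{KL}: one identifies right (resp.\ left) DG modules over the triangular category $\dR\underset{\mT}{\with}\dS$ with triples consisting of a DG $\dR$\!--module, a DG $\dS$\!--module, and a structure map through $\mT$, and then checks quasi-representability componentwise using the semi-orthogonal decomposition together with the fact that $\mT$ is perfect as a right DG $\dR$\!--module. You also correctly place $\psi$ in $\Mor(\dV',\dY)$, where $\mf\ldot\mg'$ and $\mh'$ actually live (the statement's ``$\Mor(\dV',\dX)$'' is a typo).
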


A composition $\mv\ldot\mv':\dV'\to\dV$ of any two morphisms $\mv': \dV'\to\dZ$ and $\mv: \dZ\to\dV$ is a morphism that can be obtained as
a cone of the map $\mh{\ldot}\mf\ldot\mg'\stackrel{\wt{\phi}+\wt{\psi}}{\lto}\mg\ldot\mg'\oplus\mh\ldot\mh'$ in the triangulated category $\Mor(\dV', \dV),$
where $\wt{\phi}:\mh\ldot\mf\ldot\mg'\to \mg\ldot\mg'$ and $\wt{\psi}: \mh\ldot\mf\ldot\mg'\to \mh\ldot\mh'$ are maps induced by $\phi: \mh\ldot\mf\to \mg$ and $\psi: \mf\ldot\mg'\to \mh',$
respectively.

Under the description above the projections $\mp_{\dX}$ and $\mp_{\dY}$ are equal to
$(\mId_{\dX}, \m0; 0)$ and $(\mf, \mId_{\dY}; \id),$ respectively, while the sections
$\ml_{\dX}$ and $\mr_{\dY}$ coincide with the morphisms $(\mId_{\dX}, \m0; 0)$ and $(\m0, \mId_{\dY}; 0),$ respectively.
Furthermore, in this situation there is a right adjoint morphism $\mr_{\dX}:\dX\to\dZ$ for which $\bL\mr_{\dX}^*\cong \bR\mp_{\dX*}.$
It is given by the triple $(\mId_{\dX}, \mf; \id).$ There is also another projection $\overline{\mp}_{\dY}$ which is determined by the triple
$(\m0, \mId_{\dY}; 0).$ For this morphism we have isomorphisms of the funtors $\bL\overline{\mp}_{\dY}^{*}\cong\bR\mr_{\dY*}\cong\mp_{\dY}^{!}.$
Finally, we obtain the following recollement for derived noncommutative schemes (see \cite{BBD}):
\begin{align}
\xymatrix{
\dX \ar@<1.5ex>[rr]^{\mr_{\dX}}\ar@<-1.5ex>[rr]_{\ml_{\dX}}&& \dZ\ar[ll]|{\, \mp_{\dX}\, }\ar@<1.5ex>[rr]^{\overline{\mp}_{\dY}}\ar@<-1.5ex>[rr]_{\mp_{\dY}} && \dY\ar[ll]|{\, \mr_{\dY}\, },
}
\end{align}
with the following properties:
\begin{itemize}
\item[(a)]
$\ml_{\dX}$ and $\mr_{\dX}$ are left and right adjoint sections for the projection $\mp_{\dX},$
\item[(b)]
$\mp_{\dY}$ and $\overline{\mp}_{\dY}$ are left and right adjoint projections for the section $\mr_{\dY},$
\item[(c)]
$\mp_{\dX}\ldot\ml_{\dX}\cong\mId_{\dX}\cong\mp_{\dX}\ldot\mr_{\dX}$ and $\mp_{\dY}\ldot\mr_{\dY}\cong\mId_{\dY}\cong\overline{\mp}_{\dY}\ldot\mr_{\dY},$
\item[(d)]
$\mp_{\dX}\ldot\mr_{\dY}=0$ and, hence, by adjointness $\mp_{\dY}\ldot\ml_{\dX}=0=\overline{\mp}_{\dY}\ldot\mr_{\dX}.$ Moreover, the kernel of the functor
$\bL\mr^*_{\dY}$ is essentially the image of the functor $\bL\mp^*_{\dX}.$
\end{itemize}

In addition, we have an isomorphism $\mp_{\dY}\ldot\mr_{\dX}\cong\mf,$ while $\overline{\mp}_{\dY}\ldot\ml_{\dX}\cong\mf[1].$

Let us consider the direct image functor $\bR\mf_*: \D(\dR) \to \D(\dS).$  It is isomorphic to $\bR\Hom_{\dR}(\mT, -),$ where $\mT$ is the corresponding quasi-functor.
Since $\mT$ is perfect as DG $\dR$\!--module,
this functor commutes with direct sums.
Therefore, there is a DG $\dR\hy\dS$\!--bimodule $\mU$ such that the functor $\bR\mf_*$ is represented as $(-)\stackrel{\bL}{\otimes}_\dR \mU.$
In fact, the DG $\dR\hy\dS$\!--bimodule $\mU$ is isomorphic to $\dHom_{\dR}(\mT, \dR).$
We can consider another gluing $\dZ'=\dY\underset{\mU}{\oright}\dX.$
However, the gluing $\dZ'=\dY\underset{\mU}{\oright}\dX$ is quasi-equivalebt to $\dZ=\dX\underset{\mT}{\oright}\dY$ and determines the same noncommutative scheme, which will be also denoted as $\dY\overset{\mf}{\oright}\dX.$
In fact, to obtain the decomposition $\dY\underset{\mU}{\oright}\dX$ for our noncommutative scheme $\dZ$ we have to consider the diagram (\ref{localiz}) with the set of morphisms
$(\mr_{\dY}, \overline{\mp}_{\dY}; \mp_{\dX},\mr_{\dX})$ instead of the set $(\ml_{\dX},\mp_{\dX}; \mp_{\dY},\mr_{\dY})$ for $\dZ=\dX\underset{\mT}{\oright}\dY.$
Thus, the noncommutative schemes $\dX\underset{\mf}{\oright}\dY$ and $\dY\overset{\mf}{\oright}\dX$ are isomorphic despite the constructions and the decompositions being different.

Note that if the morphism $\mf:\dX\to\dY$ is a pp-morphism and $\mg:\dY\to\dX$ is the right adjoint morphism, then, by construction, we have $\dY\overset{\mf}{\oright}\dX\cong \dY\underset{\mU}{\oright}\dX\cong \dY\underset{\mg}{\oright}\dX.$

The following example is coming from usual commutative geometry.

\begin{example}\label{ex2}
{\rm
Let $\operatorname{i}: Z\hookrightarrow Y$ be a closed immersion of a smooth proper scheme $Z$ into the smooth proper scheme $Y$
such that $Z$ is of codimension $2.$ Denote by $\wt{Y}$ the blowup of $Y$ along the closed subscheme $Z.$
A blow up formula from \cite{Blow} gives a semi-orthogonal decomposition of the category $\prf \wt{Y}$ in
the form $\langle \prf Y, \prf Z\rangle.$ Moreover, by Proposition \ref{gluing_semi-orthogonal} the DG category $\prfdg \wt{Y}$ is quasi-equivalent to the gluing
$(\prfdg Y) \underset{\mU}{\oright}(\prfdg Z),$ where $\mU$ is a DG bimodule of the form
\[
\mU(\mP, \mQ)\cong\dHom_{\prfdg X}(\mi^* \mQ,\;  \mP),\quad\text{with}\quad \mP\in\prfdg Z,\; \mQ\in\prfdg Y,
\]
and $\mi^*$ is the inverse image quasi-functor from $\prfdg Y$ to $\prfdg Z.$
The gluing $(\prfdg Y) \underset{\mU}{\oright}(\prfdg Z)$ is actually the gluing $(\prfdg Y) \underset{\operatorname{i}}{\oright}(\prfdg Z)$
along the morphism $\operatorname{i}: Z\hookrightarrow Y.$
}
\end{example}

\subsection{Properties of gluings}

Let us now discuss some properties of derived noncommutative schemes that are obtained by gluing.
First of all, it is easy to see when a gluing $\dX\underset{\mT}{\oright}\dY$ is proper and regular (see \cite[Prop. 3.20, 3.22]{O_glue}).

\begin{proposition}\label{prop_glue}
Let $\dX=\prfdg\dR$ and $\dY=\prfdg\dS$ be two derived noncommutative schemes over $\kk$ and let $\mT$ be a DG $\dY\hy\dX$\!--bimodule.
Then the following conditions are equivalent:
\begin{enumerate}
\item the  gluing $\dX\underset{\mT}{\oright}\dY$ is proper,
\item $\dX, \dY$ are proper, and  $\dim_{\kk}\bigoplus_i  H^i(\mT(\mQ, \mP))<\infty$
for all $\mP\in\prfdg\dR, \mQ\in\prfdg\dS.$
\end{enumerate}
\end{proposition}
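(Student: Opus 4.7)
The plan is to reduce properness of the gluing to a finiteness check on the Hom-complexes between canonical generators of the underlying DG category. By construction and Remark \ref{rem_quasi}, the gluing $\dZ=\dX\underset{\mT}{\oright}\dY$ is quasi-equivalent to $\prfdg(\dR\underset{\mT}{\with}\dS)$, and the small DG category $\dC:=\dR\underset{\mT}{\with}\dS$ has precisely two objects whose pairwise Hom-complexes are $\dR$, $\dS$, $\mT(\mS,\mR)$, and $0$. The first step is an auxiliary lemma of general nature: for any small DG category $\dA$ that is classically generated by finitely many objects $G_1,\dots,G_n$, the DG category $\prfdg\dA$ is proper if and only if $\dim_{\kk}\bigoplus_i H^i(\dHom_{\dA}(G_a,G_b))<\infty$ for all pairs $a,b$. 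The forward direction is immediate, while the reverse follows because the class of objects with pairwise cohomologically finite-dimensional Hom is closed under shifts, cones, finite direct sums, and direct summands, i.e.\ exactly the operations through which $\prfdg\dA$ is built from its classical generators.

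Specializing this lemma to $\dC$ reduces the properness of $\dZ$ to the conjunction of three finiteness conditions: $\dim_{\kk}\bigoplus_i H^i(\dR)<\infty$, $\dim_{\kk}\bigoplus_i H^i(\dS)<\infty$, and $\dim_{\kk}\bigoplus_i H^i(\mT(\mS,\mR))<\infty$. Applying the same lemma to $\prfdg\dR$ and $\prfdg\dS$ individually, the first two conditions translate to the properness of $\dX$ and $\dY$, respectively, which is the interpretation already indicated after Definition \ref{prop_def}. The remaining task is to match the third condition with the finite-dimensionality of $\bigoplus_i H^i(\mT(\mQ,\mP))$ for all $\mP\in\prfdg\dR$ and $\mQ\in\prfdg\dS$. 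One direction, namely $(2)\Rightarrow$ the third condition, is obtained simply by specializing $\mP=\mR$, $\mQ=\mS$.

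For the converse direction, I would exploit the bifunctorial character of $\mT$ as a DG $\dS\hy\dR$-bimodule: for fixed $\mQ$ the assignment $\mP\mapsto \mT(\mQ,\mP)$ is a DG functor on $\prfdg\dR$ compatible with cones, shifts, and direct summands, and symmetrically for fixed $\mP$ in the other variable. Since $\mP$ and $\mQ$ are perfect, they are obtained from $\mR$ and $\mS$ respectively by finitely many cones, shifts, finite direct sums, and retracts, and these operations propagate finite-dimensionality of total cohomology from the base case $\mT(\mS,\mR)$ to the general $\mT(\mQ,\mP)$. The main obstacle in a full writeup is organizing this two-variable devissage cleanly, but conceptually it is precisely the same closure-under-generators argument as in the auxiliary lemma, just carried out simultaneously in both variables.
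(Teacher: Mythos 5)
Your argument is correct and is essentially the standard dévissage that the paper itself relies on: the paper gives no proof here (it cites \cite[Prop. 3.20, 3.22]{O_glue}), but its remark immediately after the proposition — that it suffices to check the condition on $\mT$ for $\mP=\dR$, $\mQ=\dS$ — is exactly the reduction your two-variable closure argument establishes. Both your auxiliary lemma and the propagation of finite-dimensionality of $\bigoplus_i H^i(\mT(\mQ,\mP))$ from the generators to all perfect modules are sound, since $H^*(\mT(-,-))$ is (co)homological in each variable and the relevant finiteness is preserved under shifts, cones, finite sums, and retracts.
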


Note that it is sufficient to check the property of $\mT$ mentioned above in the case $\mP=\dR$ and $\mQ=\dS,$ i.e.  $\dim_{\kk}\bigoplus_i  H^i(\mT)<\infty $ for $\mT$ as DG $\dS\hy\dR$\!--bimodule.

\begin{proposition}\label{reg_glue}
Let $\dX=\prfdg\dR$ and $\dY=\prfdg\dS$ be two derived noncommutative schemes over $\kk$ and let $\mT$ be a DG $\dY\hy\dX$\!--bimodule.
Then the following conditions are equivalent:
\begin{enumerate}
\item the  gluing $\dX\underset{\mT}{\oright}\dY$ is regular,
\item $\dX$ and $\dY$ are regular.
\end{enumerate}
\end{proposition}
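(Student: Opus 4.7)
The plan is to prove both implications using the semi-orthogonal decomposition $\prf(\dR\underset{\mT}{\with}\dS)=\langle \prf\dR,\prf\dS\rangle$ furnished by Proposition \ref{dg_semiorhtogonal}. For shortness write $\dZ=\dX\underset{\mT}{\oright}\dY$ and $\T=\prf(\dR\underset{\mT}{\with}\dS).$

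For the direction $(2)\Rightarrow(1)$, assume $G_{\dX}\in\prf\dR$ and $G_{\dY}\in\prf\dS$ are strong generators, with $\langle G_{\dX}\rangle_m=\prf\dR$ and $\langle G_{\dY}\rangle_n=\prf\dS.$ I claim that $G:=G_{\dX}\oplus G_{\dY},$ viewed as an object of $\T$ via the embeddings, is a strong generator of $\T.$ Indeed, $G_{\dX}$ and $G_{\dY}$ are direct summands of $G,$ so $\langle G\rangle_1$ contains both $\langle G_{\dX}\rangle_1$ and $\langle G_{\dY}\rangle_1,$ and inductively $\langle G\rangle_k\supseteq\langle G_{\dX}\rangle_k\cup\langle G_{\dY}\rangle_k$ for all $k.$ Given any $Z\in\T,$ the semi-orthogonal decomposition yields a distinguished triangle $Z_1\to Z\to Z_2$ with $Z_1\in\prf\dR$ and $Z_2\in\prf\dS,$ so $Z_1\in\langle G_{\dX}\rangle_m\subseteq\langle G\rangle_m$ and $Z_2\in\langle G_{\dY}\rangle_n\subseteq\langle G\rangle_n.$ Using associativity of the $\diamond$--operation, one gets $Z\in\langle G\rangle_m\diamond\langle G\rangle_n=\langle G\rangle_{m+n},$ hence $\langle G\rangle_{m+n}=\T.$

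For the direction $(1)\Rightarrow(2),$ assume $G\in\T$ is a strong generator with $\langle G\rangle_N=\T.$ The semi-orthogonal decomposition $\T=\langle\prf\dR,\prf\dS\rangle$ makes $\prf\dR$ right admissible and $\prf\dS$ left admissible in $\T$; let $r:\T\to\prf\dR$ and $\ell:\T\to\prf\dS$ be the corresponding right and left adjoint projection functors. Each is an exact functor that restricts to the identity on its target, hence is essentially surjective. Since any exact functor $F$ preserves shifts, finite direct sums, direct summands, and distinguished triangles, one easily shows by induction on $k$ that $F(\langle G\rangle_k)\subseteq\langle F(G)\rangle_k$ inside the target category. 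Applying this to $r$ and to $\ell,$ and using essential surjectivity, we conclude $\langle r(G)\rangle_N=\prf\dR$ and $\langle\ell(G)\rangle_N=\prf\dS,$ so $\dX$ and $\dY$ are both regular.

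The argument is essentially formal once the two inputs are in hand: the semi-orthogonal decomposition of $\T$ from Proposition \ref{dg_semiorhtogonal} together with the evident admissibility of its components, and the elementary fact that the operation $\langle-\rangle_k$ is compatible with exact functors and with passage to direct summands. The only point requiring mild care is the index accounting in the forward direction—verifying that the naïve bound $m+n$ on the dimension is correct and does not inadvertently require associativity failures of $\diamond$; this can be handled directly from the definition $\langle G\rangle_k=\langle G\rangle_{k-1}\diamond\langle G\rangle_1$ by iterating on one factor at a time, which is the main (though minor) obstacle in making the proof fully rigorous.
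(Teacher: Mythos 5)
Your proof is correct and follows the same standard route as the paper's source for this statement (\cite[Prop.~3.22]{O_glue}): the semi-orthogonal decomposition $\langle\prf\dR,\prf\dS\rangle$ of $\T=\prf(\dR\underset{\mT}{\with}\dS)$ gives $\langle G_{\dX}\oplus G_{\dY}\rangle_{m+n}=\T$ in one direction, while in the other the exact, essentially surjective projection functors onto the two components transport a strong generator of $\T$ to strong generators of $\prf\dR$ and $\prf\dS$ (using that exact additive functors respect $\langle-\rangle_k$, and associativity of $\diamond$, both standard). The only blemishes are cosmetic conventions: with Definition \ref{sd} the decomposition triangle runs $Z_2\to Z\to Z_1$ with $Z_2\in\prf\dS$ and $Z_1\in\prf\dR$, and it is $\prf\dR$ that is left admissible and $\prf\dS$ that is right admissible --- neither point affects the argument.
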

Here we see that the property of being regular does not depend on the DG bimodule $\mT.$

On the other hand the property of being smooth depends on the DG bimodule $\mT.$
Since $\dR\underset{\mT}{\with}\dS$ and $\dX\underset{\mT}{\oright}\dY$ are Morita equivalent,
smoothness of $\dR\underset{\mT}{\with}\dS$ and $\dX\underset{\mT}{\oright}\dY$ holds simultaneously.
Further, we can compare smoothness of a gluing with smoothness of the summands. We obtain the following.

\begin{proposition}\cite[3.24]{LS}\label{smooth_glue}
Let $\dX=\prfdg\dR$ and $\dY=\prfdg\dS$ be two derived noncommutative schemes over $\kk$ and let $\mT$ be a DG $\dY\hy\dX$\!--bimodule.
Then the following conditions are equivalent:
\begin{enumerate}
\item the gluing $\dX\underset{\mT}{\oright}\dY$ is smooth,
\item $\dX$ and $\dY$ are smooth and $\mT$ is a perfect as DG $\dY\hy\dX$\!--bimodule.
\end{enumerate}
\end{proposition}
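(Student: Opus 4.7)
Since smoothness is invariant under Morita equivalence \cite{LS}, it suffices to prove the equivalent statement with $\dC := \dR\underset{\mT}{\with}\dS$ in place of $\dX \underset{\mT}{\oright}\dY$: the diagonal $\dC$-bimodule is perfect in $\D(\dC^{\op}\otimes_{\kk}\dC)$ if and only if (2) holds.

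The structural backbone of the proof is a short exact sequence of DG $\dC$-bimodules that exists because the lower triangular structure prohibits morphisms from $\dS$-objects to $\dR$-objects in $\dC$:
$$0 \lto \mT_{\dC} \lto \dC \lto \tilde\dR \oplus \tilde\dS \lto 0,$$
where $\mT_{\dC}$ has values $\mT(\mY, \mX)$ on the $\dR \times \dS$ block and zero elsewhere, while $\tilde\dR$ and $\tilde\dS$ are the extension-by-zero $\dC$-bimodules obtained from the diagonals of $\dR$ and $\dS$ on the $\dR\times\dR$ and $\dS\times\dS$ blocks respectively.

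For (2) $\Rightarrow$ (1), I would first verify by a direct coend computation that $\mT_{\dC}$ coincides with the induction of $\mT$ along the fully faithful embedding of bimodule categories $\dR^{\op}\otimes\dS \hookrightarrow \dC^{\op}\otimes\dC$; since induction along fully faithful DG functors preserves perfectness (as it sends representable bimodules to representable bimodules), $\mT_{\dC}$ is perfect whenever $\mT$ is. Similarly, the induction $(\ma\otimes\ma)^{*}\dR$ along $\ma\otimes\ma:\dR^{\op}\otimes\dR \to \dC^{\op}\otimes\dC$ is perfect whenever $\dR$ is smooth, and a direct coend computation shows this induction fits in a short exact sequence $0 \to \mT_{\dC} \to (\ma\otimes\ma)^{*}\dR \to \tilde\dR \to 0$, giving perfectness of $\tilde\dR$. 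The symmetric argument handles $\tilde\dS$, and the main triangle above then yields perfectness of $\dC$ as a $\dC$-bimodule.

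For (1) $\Rightarrow$ (2), I would apply the exact restriction functors along the fully faithful inclusions $\dR^{\op}\otimes\dR,\; \dS^{\op}\otimes\dS,\; \dR^{\op}\otimes\dS \hookrightarrow \dC^{\op}\otimes\dC$: by direct inspection on blocks, the diagonal $\dC$-bimodule restricts to the diagonal $\dR$-bimodule, the diagonal $\dS$-bimodule, and $\mT$ respectively. The crucial technical step is to transfer perfectness through these restrictions; this uses the admissibility of $\prf\dR, \prf\dS \subset \prf\dC$ provided by the semi-orthogonal decomposition of Proposition \ref{dg_semiorhtogonal}, which yields the necessary adjoint structure on bimodule categories. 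The main obstacle lies precisely here: restriction does not preserve perfectness of arbitrary $\dC$-bimodules (the restriction of a representable $\mh^{\mV}\otimes\mh_{\mW}$ with $\mV\in\dS,\; \mW\in\dR$ is the non-representable bimodule $\mT(\mV,-)\otimes\dR(\mW,-)$), so the argument must exploit the specific compositional structure of the diagonal $\dC$-bimodule together with the SOD-induced adjoints to conclude smoothness of $\dR$, $\dS$ and perfectness of $\mT$.
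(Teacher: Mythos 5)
The paper gives no proof of this proposition --- it is imported from \cite[3.24]{LS} --- so your argument has to stand on its own. Your direction (2)$\Rightarrow$(1) does: after passing to $\dC=\dR\underset{\mT}{\with}\dS$ by Morita invariance, the off-diagonal block $\mT_{\dC}$ really is a two-sided DG ideal of the diagonal bimodule (precisely because the triangular structure kills one off-diagonal block), the underived induction along $\dS^{\op}\otimes\dR\hookrightarrow\dC^{\op}\otimes\dC$ is literally extension by zero (hence exact, hence computes the derived induction and preserves perfectness), and the block computation identifying $\bL(\ma\otimes\ma)^{*}\dR$ with the bimodule having blocks $(\dR,\mT,0,0)$ is correct; the three resulting triangles give perfectness of the diagonal of $\dC$. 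This is the standard argument.

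The genuine gap is in (1)$\Rightarrow$(2), where you correctly diagnose that restriction to $\dR^{\op}\otimes\dR$ etc.\ does not preserve perfectness, but then only assert that ``the argument must exploit\dots the SOD-induced adjoints to conclude'' --- that is a statement of the problem, not a proof. Two concrete objections. First, the admissibility you invoke is not available: without regularity/properness hypotheses the semi-orthogonal decomposition of a gluing only provides one-sided adjoints ($\prf\dR$ left admissible, $\prf\dS$ right admissible), and two-sided admissibility of the pieces is essentially equivalent to what you are trying to establish. Second, the actual mechanism is not restriction but induction along the DG \emph{projection} functors $\pi_{\dR}:\dC\to\prfdg\dR$ and $\pi_{\dS}:\dC\to\prfdg\dS$ that send the objects of the other component to $0$ (these are honest DG functors exactly because of the triangularity); since every composable chain in $\dC$ that starts and ends in one component and passes through the other contains a zero Hom-complex, the bar resolution shows $\bL(\pi_{\dR}\otimes\pi_{\dR})^{*}\dC\cong\dR$, and induction preserves perfectness, whence $\dR$ and $\dS$ are smooth. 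Even after that, perfectness of $\mT$ does not follow formally from your three triangles (they never force their common term $\mT_{\dC}$ to be perfect); one needs an additional exact sequence such as
$0\to\mT_{\dC}\to\bL(\ma\otimes\ma)^{*}\dR\oplus\bL(\mb\otimes\mb)^{*}\dS\to\dC\to0$,
whose outer terms are now known to be perfect, followed by the observation that the fully faithful, coproduct-preserving functor ``extension by zero'' reflects compactness, so perfectness of $\mT_{\dC}$ gives perfectness of $\mT$. None of this is in your proposal, so the converse direction remains unproved as written.
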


Let us consider the point $\pt=\prfdg\kk$ and a noncommutative scheme $\dX$ that is the gluing $\pt \underset{V}{\oright}\pt,$ where $V$ is a $\kk$\!--vector space.
If the vector space $V$ is infinite dimensional, then the noncommutative scheme $\dX$ is not smooth in spite of it being regular. Of course, in this case $\dX$ is also not proper.

\subsection{Geometric realizations of gluings}\label{real_gluings}

Let $X$ and $Y$ be two usual smooth irreducible projective schemes over a field $\kk.$ Let $\mE\in\prfdg(X\times_{\kk} Y)$ be a perfect complex
on the product $X\times_{\kk} Y.$
Note that in the  case of projective varieties any perfect complex  is globally (not only locally) quasi-isomorphic to
 a strictly perfect complex, i.e. a bounded complex of locally free sheaves of finite type (see, e.g. \cite[2.3.1]{TT}).

Let us consider the DG category that is obtained as the gluing $(\prfdg X) \underset{\mE}{\oright}(\prfdg Y).$ It is a derived noncommutative scheme
which will be denoted by $\dZ:=X\underset{\mE}{\oright} Y.$   Taking into account Theorem \ref{toen} and Propositions \ref{prop_glue}, \ref{smooth_glue},
we can deduce that the noncommutative scheme $\dZ$ is smooth and proper. The derived noncommutative scheme $\dZ$ is not commutative in general. However, it is natural to ask about existence of a geometric realization for
such derived noncommutative schemes.
The following theorem is proved in \cite{O_glue}.

\begin{theorem}\label{main}\cite[Th. 4.11]{O_glue}
Let $X$ and $Y$ be smooth irreducible projective schemes over a field $\kk$ and let $\mE$ be a perfect complex
on the product $X\times_{\kk} Y.$ Let $\dZ=X \underset{\mE}{\oright} Y$ be the derived noncommutative scheme that is the gluing of $\dX$ and $\dY$ via $\mE.$
Then there exist a smooth projective scheme $V$ and an ff-morphism $\mf: V\to\dZ,$ which give a pure geometric realization for the noncommutative scheme $\dZ.$
\end{theorem}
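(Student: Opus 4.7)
The plan is to realize $\dZ$ as an admissible subcategory of $\prf V$ for a smooth projective $V$ obtained from $X$, $Y$, and $\mE$ by a combination of products, projective bundles, and iterated blowups, using Orlov's projective bundle formula and the blowup formula (in the spirit of Example~\ref{ex2}) to exhibit the correct semi-orthogonal decomposition with glue $\mE$.

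\textbf{Reduction to vector-bundle kernels.} Since $W := X \times_{\kk} Y$ is smooth projective, every perfect complex on $W$ is quasi-isomorphic to a bounded complex of vector bundles. Writing $\mE$ as an iterated cone of morphisms between such vector bundles, and using Proposition~\ref{gluing_quasifunctors} together with the cone-of-morphisms description of morphisms into and out of a gluing (Proposition~\ref{morph_glu}), one expresses $\dZ$ as an iterated gluing whose kernels are each single vector bundles (shifts are absorbed into the semi-orthogonal labelling). At each stage the combined $V$ must remain smooth and projective, which is achieved by iterating blowups and fiber products of smooth projective schemes. This reduces the problem to the case where $\mE = \mathcal{F}$ is a single locally free sheaf of some rank $r$ on $W$.

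\textbf{Geometric construction for a vector-bundle kernel.} For $\mE = \mathcal{F}$ of rank $r$, consider the projective bundle $\pi: P = \PP_W(\mathcal{F}\oplus\cO_W) \to W$, which is smooth projective. By Orlov's projective bundle formula, $\prf P$ has a semi-orthogonal decomposition into $r+1$ twists of $\prf W$. Inside $P$ one has two canonical smooth closed subschemes: the zero section $W \hookrightarrow P$ of codimension $r$ (corresponding to the $\cO_W$ summand) and the hyperplane at infinity $\PP_W(\mathcal{F}) \hookrightarrow P$ of codimension $1$. Performing further blowups of $P$ along suitably chosen smooth centers of codimension at least two (e.g.\ subvarieties built from $\{x_0\}\times Y$ and $X\times\{y_0\}$ twisted by the projective bundle structure, chosen so that the pullbacks of $\prf X$ and $\prf Y$ become fully faithful after mutation) and iterating the blowup formula, one obtains a smooth projective $V$ together with admissible embeddings $\prf X \hookrightarrow \prf V$ and $\prf Y \hookrightarrow \prf V$. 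Fourier--Mukai kernel computations on $P$ and its blow-ups, combined with a sequence of mutations in the resulting semi-orthogonal decomposition, identify the glue between these two admissible summands with $\mathcal{F}$. The admissible subcategory $\langle \prf X, \prf Y\rangle_{\mathcal{F}} \subset \prf V$ they generate is equivalent to $\prf \dZ$, and the induced functor corresponds to an ff-morphism $\mf: V \to \dZ$ which is a pure geometric realization by Proposition~\ref{admissible}.

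\textbf{Main obstacle.} The delicate part is the construction step: arranging the projective-bundle and blowup operations so that the final semi-orthogonal decomposition of $\prf V$ contains $\prf X$ and $\prf Y$ as admissible summands whose glue is precisely $\mathcal{F}$, rather than some shifted or twisted variant. This requires careful bookkeeping of the twisting line bundles introduced by each projective bundle formula and by each exceptional divisor, and a sequence of mutations to rearrange the decomposition. A secondary subtlety is verifying that purity (i.e.\ admissibility of the resulting embedding) is preserved through the reduction of Step~1: at each recursive step one must track right adjoints along both the newly added inclusion and the previously constructed admissible inclusion, using Proposition~\ref{Keller2} and Proposition~\ref{admissible} to ensure that composing admissible embeddings with the projective bundle / blowup inclusions continues to yield admissible subcategories.
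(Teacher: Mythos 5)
Your overall strategy --- build $V$ out of $X$, $Y$ and $X\times_{\kk}Y$ by projective bundles and blowups, obtain a long semi-orthogonal decomposition containing $\dZ$, and deduce purity from Proposition \ref{admissible} --- has the right flavor, and it is consistent with the only structural information the paper gives about the actual construction of \cite{O_glue}, namely that $\prf V=\langle\N_1,\dots,\N_k\rangle$ with each $\N_i$ equivalent to one of $\prf\kk$, $\prf X$, $\prf Y$, $\prf(X\times_{\kk}Y)$. However, your first step contains a genuine gap. Writing $\mE=\mCone(\phi:\mE_1\to\mE_2)$ does not express $\dX\underset{\mE}{\oright}\dY$ as an ``iterated gluing'' with vector-bundle kernels: the gluing via a cone is still a gluing of exactly two pieces, and neither Proposition \ref{gluing_quasifunctors} (functoriality of gluings along maps of bimodules) nor Proposition \ref{morph_glu} (description of morphisms into and out of a fixed gluing) produces a decomposition of $\dX\underset{\mCone(\phi)}{\oright}\dY$ in terms of the gluings via $\mE_1$ and $\mE_2$. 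What is actually true, and what constitutes the hardest part of the proof in \cite{O_glue} (in the spirit of Theorem \ref{main1_Krull}), is that the gluing via a cone embeds as an \emph{admissible subcategory} of a strictly larger iterated gluing; this is precisely why the final $V$ has many components rather than two. Also, shifts cannot be ``absorbed into the labelling'': $\dX\underset{\F}{\oright}\dY$ and $\dX\underset{\F[1]}{\oright}\dY$ are genuinely different noncommutative schemes.

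In the geometric step you assert, rather than prove, the key identification. Blowing up $\PP_W(\F\oplus\cO_W)$ along centers built from $\{x_0\}\times Y$ and $X\times\{y_0\}$ does contribute components equivalent to $\prf Y$ and $\prf X$ by the blowup formula, but the bimodule gluing those components is dictated by the conormal data of the centers and their relative position, and nothing in your sketch forces it to be the \emph{prescribed} $\F$ --- arranging exactly that is the content of the theorem, not a ``bookkeeping'' issue. Note also that the pullback $pr_1^*:\prf X\to\prf(X\times_{\kk}Y)$ is fully faithful only when $H^*(Y,\cO_Y)\cong\kk$ (and symmetrically for $pr_2^*$), and no mutation can repair a failure of full faithfulness; in the construction of \cite{O_glue} the copies of $\prf X$ and $\prf Y$ arise as blowup contributions from centers isomorphic to $X$ and $Y$, not as pullbacks along the projections. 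To complete your argument you would need the precise embedding lemma reducing an arbitrary perfect glue to iterated gluings via ``geometric'' bimodules (those induced by $pr_1$, $pr_2$, the structure morphisms to a point, and global sections of very ample line bundles), which is the substance of \cite[Th. 4.11]{O_glue} and is not supplied here.
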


A proof of this theorem can be found in \cite{O_glue}. It is constructive and it
is useful to take in account that the category $\prf V$ from  Theorem \ref{main} has a semi-orthogonal decomposition of the form
$
\prf V=\langle \N_1,\dots \N_k\rangle
$
 such that each $\N_i$ is equivalent to one of the four categories: namely,
$\prf\kk,\; \prf X,\; \prf Y,$ and  $\prf (X\times_{\kk} Y).$

Now we can extend this result  to the case of derived noncommutative schemes.
Let $X_i,\; i=1,\dots,n$ be smooth and projective schemes and let $\dN_i\subset\prfdg X_i,\; i=1,\dots, n$ be full pretriangulated DG subcategories.
Assume that the homotopy triangulated  categories $\N_i=\Ho(\dN_i)$ are admissible in $\prf X_i.$
By Propositions \ref{prop_glue} and \ref{smooth_glue} these conditions imply that the derived noncommutative schemes $\dN_i$
are proper and smooth. Moreover, they are coming with pure geometric realizations.

\begin{theorem}\label{main2}\cite[Th. 4.15]{O_glue}
Let the DG categories $\dN_i,\; i=1,\dots, n$ and the smooth projective schemes $X_i,\; i=1,\dots, n$ be as above.
Let $\dX=\prfdg\dR$ be a proper derived noncommutative scheme with full embeddings
 of the DG categories $\dN_i\subset \prfdg\dR$ such that $\prf\dR$ has a semi-orthogonal decomposition of the form
$
\langle\N_1, \N_2,\dots, \N_n\rangle,
$
where $\N_i=\Ho(\dN_i).$ Then there exist a smooth projective scheme $X$ and  an ff-morphism $\mf: X\to \dX$ which give
a pure geometric realization for the noncommutative scheme $\dX.$
\end{theorem}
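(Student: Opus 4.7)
The plan is to proceed by induction on $n$, using Theorem \ref{main} as the engine for the inductive step. The base case $n=1$ is essentially tautological: the hypothesis says $\dN_1 \subset \prfdg X_1$ is a full DG subcategory whose homotopy is admissible in $\prf X_1$, so the inclusion quasi-functor defines a morphism $\mi\colon X_1 \to \dN_1$ of noncommutative schemes; full faithfulness of the inclusion makes $\mi$ an ff-morphism, and admissibility of $\N_1$ supplies the right adjoint that makes $\mi$ a pp-morphism, so $\mi$ is a pure geometric realization.

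For the inductive step, decompose $\prf\dR = \langle \N_1, \N'\rangle$ with $\N' = \langle \N_2, \dots, \N_n\rangle$, which is admissible as the orthogonal complement of the admissible $\N_1$. Let $\dX' \subset \prfdg\dR$ be the DG enhancement of $\N'$. By Proposition \ref{gluing_semi-orthogonal}, there is a quasi-equivalence $\prfdg\dR \cong \dN_1 \underset{\mT}{\oright} \dX'$ for the canonical bimodule $\mT(\mY, \mX) = \dHom_{\prfdg\dR}(\mX, \mY)$ with $\mX \in \dN_1,\; \mY \in \dX'$. The inductive hypothesis applied to $\dX'$ and $\N_2,\dots,\N_n$ produces a smooth projective scheme $Y$ together with a pure realization $Y \to \dX'$, which at the DG level yields a fully faithful quasi-functor $\beta\colon \dX' \hookrightarrow \prfdg Y$ admitting a right adjoint. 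Let $\alpha\colon \dN_1 \hookrightarrow \prfdg X_1$ denote the given embedding, which is likewise pure.

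The crux is to produce a perfect complex $\mE$ on $X_1 \times_{\kk} Y$ out of $\mT$ and then feed the resulting gluing into Theorem \ref{main}. Put $\mE := \bL(\beta \otimes \alpha)^* \mT$, viewed as a DG bimodule over $\prfdg Y \otimes \prfdg X_1^{\op}$; by Theorem \ref{toen} it will correspond to a perfect complex on $X_1 \times_\kk Y$ once we verify that $\mE$ is perfect. Because the extension-of-scalars quasi-functor preserves perfectness, this reduces to the key claim that $\mT$ is already perfect as a bimodule over $\dX' \otimes \dN_1^{\op}$. Here the assumption that $\dX$ is proper is essential: $\dN_1$ is smooth and proper by assumption, $\dX'$ is smooth and proper by the inductive hypothesis (being admissible in the smooth proper $\prf Y$), hence $\dX' \otimes \dN_1^{\op}$ is smooth and proper, and any cohomologically bounded bimodule with finite-dimensional total cohomology is automatically perfect over such a category; both of these bounds on $\mT$ follow from properness of $\dX$. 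The same argument, combined with Proposition \ref{smooth_glue}, shows that $\dX$ itself is smooth. Since $\beta \otimes \alpha$ is fully faithful, the unit morphism $\phi\colon \mT \to \bR(\beta \otimes \alpha)_*\mE$ is a quasi-isomorphism, and Proposition \ref{gluing_quasifunctors} yields a fully faithful quasi-functor
\[
\alpha \underset{\phi}{\oright} \beta\colon \dN_1 \underset{\mT}{\oright} \dX' \;\hookrightarrow\; \prfdg X_1 \underset{\mE}{\oright} \prfdg Y,
\]
equivalently an ff-morphism $\mh\colon \dZ \to \dX$ with $\dZ := X_1 \underset{\mE}{\oright} Y$.

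To finish, apply Theorem \ref{main} to $\dZ$: it produces a smooth projective scheme $V$ together with a pure realization $\mp\colon V \to \dZ$. The composition $\mf := \mh \ldot \mp\colon V \to \dX$ is an ff-morphism as a composition of such, so only the pp-property remains. Since $V$ is smooth projective and $\dX$ is smooth and proper by the above, the essential image of $\prf \dR$ under $\bL \mf^*$ is a regular, idempotent-complete, full subcategory of the proper category $\prf V$, hence admissible in $\prf V$ by Proposition \ref{admissible}, and this admissibility supplies the right adjoint that makes $\mf$ a pp-morphism. Thus $\mf\colon V\to\dX$ is a pure geometric realization. The main technical obstacle in this plan is the perfectness of the bimodule $\mT$, from which the existence of the perfect complex $\mE$ on $X_1\times_\kk Y$, the smoothness of $\dX$, and the automatic pp-property of the final morphism all cascade; everything else is a controlled assembly of the gluing formalism of Section~3 with Theorem \ref{main}.
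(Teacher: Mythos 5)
Your proposal is correct and follows essentially the route the paper intends: the theorem is quoted from \cite{O_glue}, where the proof is exactly an induction reducing to Theorem \ref{main} via the gluing formalism, and the one genuinely nontrivial point --- that the gluing bimodule $\mT$ is perfect because $\dN_1$ and $\dX'$ are smooth and proper while $\dX$ is proper, so Proposition \ref{dg_saturated} applies --- is precisely the observation the paper itself records in the remark following the theorem. Your handling of the remaining steps (transport of $\mT$ to a perfect complex on $X_1\times_{\kk}Y$ via Theorem \ref{toen}, full faithfulness of the glued quasi-functor via Proposition \ref{gluing_quasifunctors}, and purity of the final realization via Proposition \ref{admissible}) is sound.
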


Note that in this case the derived noncommutative scheme $\dX$ is also smooth. Indeed, it is  a gluing of smooth proper noncommutative schemes $\dN_i$
with respect to  DG bimodules that are DG functors from $\dN_j\otimes\dN_i^{\op}$ to $\prfdg\kk.$ By Proposition \ref{dg_saturated} all such DG bimodules are  perfect because
$\dN_i$ are smooth and proper.
Theorem \ref{main2} implies that the world of all smooth proper geometric noncommutative schemes is closed under gluing
via perfect bimodules.

These theorems have useful applications. Using results of \cite{KL} we obtain that
 for any usual proper scheme
$Y$ over a field of characteristic $0$ there is a full embedding of $\prf Y$ into $\prf V,$ where $V$ is smooth and projective.

\begin{corollary}\label{emb}\cite[Cor 4.16]{O_glue}
 Let $Y$ be a proper scheme over a field of characteristic $0.$ Then there are
a smooth projective scheme $X$ and a quasi-functor $\mF: \prfdg Y\to \prfdg X$ such that
the induced functor $F:\prf Y\to \prf X$ is fully faithful, i.e. $Y$ has a plain geometric realization that is a smooth desingularization $\mf: X\to Y.$
\end{corollary}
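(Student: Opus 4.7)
The plan is to combine the Kuznetsov--Lunts noncommutative desingularization for $Y$ with Theorem \ref{main2}, which tells us precisely that a proper noncommutative scheme admitting a semi-orthogonal decomposition whose pieces come from admissible subcategories in smooth projective schemes has a pure geometric realization by a smooth projective scheme. The strategy is thus to manufacture an intermediate smooth and proper derived noncommutative scheme $\dD$ sitting fully faithfully above $\prfdg Y$, which then has the shape required to feed into Theorem \ref{main2}.

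More precisely, first I would invoke the main theorem of \cite{KL}, already recalled in the remark following Definition \ref{des}: since $Y$ is a separated scheme of finite type over a field of characteristic $0$, there exists a quasi-functor $\mG: \prfdg Y \to \dD$ such that $G = \H^0(\mG): \prf Y \to \Ho(\dD)$ is fully faithful, and $\dD$ is a gluing of DG categories of perfect complexes on smooth proper schemes $X_1,\dots,X_n$. Using Chow's lemma together with Hironaka resolution in characteristic $0$ (and, if needed, the fact that blowing up a smooth proper scheme along a smooth center gives a new smooth proper scheme together with a semi-orthogonal decomposition of its perfect derived category via \cite{Blow}), one can arrange that each $X_i$ is in fact smooth projective, possibly enlarging the semi-orthogonal decomposition by new pieces of the form $\prfdg X_j$ with $X_j$ smooth projective. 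Since $Y$ is proper, the resulting $\dD$ is itself proper, and being a gluing of smooth projective pieces along bimodules that are perfect (by Proposition \ref{dg_saturated}, smooth and proper forces all such bimodules to be perfect), Proposition \ref{smooth_glue} shows $\dD$ is smooth as well.

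Next, I would apply Theorem \ref{main2} directly to $\dD$: by construction $\Ho(\dD)$ carries a semi-orthogonal decomposition $\langle \prf X_1, \dots, \prf X_n\rangle$ with admissible pieces coming from smooth projective schemes, so the theorem produces a smooth projective scheme $X$ together with an ff-morphism $\mf: X \to \dD$ giving a pure geometric realization. Composing the quasi-functor $\bL\mf^{*}: \prfdg\dD \to \prfdg X$, which is fully faithful on homotopy categories, with the embedding $\mG: \prfdg Y \to \dD$ yields the desired quasi-functor $\mF = \bL\mf^{*} \circ \mG: \prfdg Y \to \prfdg X$, and its homotopy functor $F: \prf Y \to \prf X$ is a composition of two fully faithful functors, hence fully faithful.

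The main obstacle, which is really the nontrivial input and not something I would reprove, is the Kuznetsov--Lunts desingularization result producing the ff-embedding of $\prfdg Y$ into a gluing of smooth proper pieces; after that, the only technical nuisance is ensuring the pieces can be taken projective rather than merely proper, which is handled by standard resolution and semi-orthogonal blow-up bookkeeping in characteristic $0$. Once that is arranged, Theorem \ref{main2} does all the real work and the corollary follows by concatenating the two fully faithful functors.
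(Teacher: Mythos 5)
Your proposal is correct and follows essentially the same route the paper intends: invoke the Kuznetsov--Lunts categorical resolution to embed $\prfdg Y$ fully faithfully into a gluing $\dD$ of DG categories of perfect complexes on smooth projective schemes, apply Theorem \ref{main2} to obtain a pure geometric realization $\mf: X\to\dD$ with $X$ smooth projective, and compose the two fully faithful functors. The only slightly loose point is attributing properness of $\dD$ to properness of $Y$ rather than to the gluing data (proper pieces plus bimodules with finite-dimensional cohomology, via Proposition \ref{prop_glue}), but this is supplied by the construction in \cite{KL} and does not affect the argument.
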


When a proper derived noncommutative scheme $\dX=\prfdg\dR$ has a full exceptional collection, there is another and  more useful procedure of constructing a smooth projective
geometric realization.
Any such derived noncommutative scheme $\dX$ is smooth and could be obtained by a procedure of sequential gluing of copies the point $\pt.$  In this case one can find a usual smooth projective scheme $X$ and an exceptional collection of line bundles
$\sigma=(\L_1,\dots, \L_n)$ on $X$ such that the DG subcategory $\dN\subset\prfdg X$
generated by $\sigma$ is quasi-equivalent to $\prfdg\dR.$
Moreover, by construction, the scheme $X$ is rational and has a full exceptional collection.

\begin{theorem}\label{exc_col}\cite[Th. 5.8]{O_glue}
Let $\dX=\prfdg\dR$ be a proper derived noncommutative scheme over $\kk$ such that the homotopy category
$\prf\dR$ has a full exceptional collection
$
\prf\dR=\langle E_1,\dots, E_n\rangle.
$
Then there are a smooth projective scheme $X$ and an exceptional collection of line bundles
$\sigma=(\L_1,\dots, \L_n)$ on $X$ such that the DG subcategory of $\prfdg X$
generated by $\sigma$ is quasi-equivalent to $\prfdg\dR.$
Moreover, $X$ can be chosen in such way that it is a tower of projective bundles and has a full exceptional collection.
\end{theorem}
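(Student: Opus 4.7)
The plan is to proceed by induction on the length $n$ of the exceptional collection, building $X$ step by step as a tower of projective bundles over $\Spec\kk$ and producing each new line bundle as a twist of $\cO(-1)$ on the newly added projective bundle layer. For the base case $n=1$, take $X = \Spec\kk$ (a trivial tower) with $\L_1 = \cO_X$; since $E_1$ is exceptional, $\dEnd(E_1)$ is quasi-isomorphic to $\kk$, so the DG subcategory generated by $\L_1$ is quasi-equivalent to $\prfdg\kk$, and hence to $\prfdg\dR$.

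For the inductive step, I would assume that a smooth projective scheme $X'$ has been constructed as a tower of projective bundles over $\Spec\kk$, together with an exceptional collection of line bundles $(\L_1,\ldots,\L_{n-1})$ on $X'$ whose generated DG subcategory $\dN'\subset\prfdg X'$ is quasi-equivalent to the DG subcategory of $\prfdg\dR$ generated by $(E_1,\ldots,E_{n-1})$. The semi-orthogonal decomposition $\prf\dR=\langle\langle E_1,\ldots,E_{n-1}\rangle,\langle E_n\rangle\rangle$, combined with Proposition \ref{gluing_semi-orthogonal}, identifies $\prfdg\dR$ with a gluing of the previous stage and $\pt$ along a DG bimodule $\mT$. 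Interpreting $\mT$ as a right module over the DG endomorphism algebra of $E_1\oplus\cdots\oplus E_{n-1}$ and transporting it through the inductive quasi-equivalence, one gets a perfect complex $\mathcal{G}\in\dN'$ whose graded Hom spaces from the $\L_i$ agree with $\Hom^*(E_i,E_n)$.

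The central geometric construction is to realize this additional gluing by one or several new projective-bundle layers above $X'$. Following the method of \cite{O_q}, I would first resolve $\mathcal{G}$ by a bounded complex whose terms are direct sums of twists of the line bundles $\L_i$, and then iterate a projective-bundle step $\pi:\mathbb{P}_{X_\bullet}(\mathcal{F}_\bullet)\to X_\bullet$ in which $\mathcal{F}_\bullet$ packages the current layer of the resolution; at each step the new line bundle has the form $\cO(-1)\otimes\pi^*\L$ for a suitable previously-available $\L$ on $X_\bullet$. By the projection formula together with the vanishing of $R\pi_*\cO(-1)$ in the relevant range, the resulting top line bundle $\L_n$ satisfies $\Hom^*(\pi^*\L_i,\L_n)\cong\Hom^*(E_i,E_n)$ and $\Hom^*(\L_n,\pi^*\L_j)=0$ for all $j<n$, so that $(\pi^*\L_1,\ldots,\pi^*\L_{n-1},\L_n)$ is an exceptional collection of line bundles on the extended tower.

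The main obstacle is not the matching of graded Hom spaces but upgrading this agreement to a quasi-equivalence at the DG level, i.e.\ matching compositions and all higher coherence data. I would handle this by applying Proposition \ref{Keller2}: once the line bundles above generate the intended subcategory abstractly and a classical generator is exhibited, the enhancement is pinned down up to quasi-equivalence. The smoothness, projectivity, and tower-of-projective-bundles structure of $X$ are preserved automatically under the iteration, while Propositions \ref{prop_glue} and \ref{smooth_glue} ensure that the gluings remain smooth and proper throughout. This completes the induction and yields the required $X$ together with the full exceptional collection of line bundles.
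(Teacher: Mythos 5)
Your overall scheme --- induction on the length of the collection, one new projective--bundle layer per exceptional object, the new line bundle a twist of $\cO(\pm1)$, and the gluing formalism of Proposition \ref{gluing_semi-orthogonal} --- is indeed the strategy of \cite[Th.~5.8]{O_glue} and of its in-paper analogue, Proposition \ref{step}. But as written there are genuine gaps. First, your Hom computation is oriented backwards. If $E_n$ is appended on the right you need $\dHom(\pi^*\L_i,\L_n)\cong\dHom(E_i,E_n)$ and $\dHom(\L_n,\pi^*\L_j)=0$; but for $\L_n=\cO(-1)\otimes\pi^*\L$ on $\PP(\F)$ one has $\dHom(\pi^*\L_i,\cO(-1)\otimes\pi^*\L)\cong\dHom(\L_i,\L\otimes\bR\pi_*\cO(-1))=0$, while $\dHom(\cO(-1)\otimes\pi^*\L,\pi^*\L_j)\cong\dHom(\F\otimes\L,\L_j)$ is generally nonzero --- exactly the opposite of what you assert. (In Proposition \ref{step} the tautological bundle $\cO(-1)$ is adjoined on the \emph{left} of the collection; to adjoin on the right one must use $\cO(1)$ or reverse the induction.) Second, the appeal to Proposition \ref{Keller2} to ``pin down the enhancement'' is not valid: that proposition requires an actual full embedding of DG categories, and agreement of graded Hom spaces between generators does not determine the DG ($A_\infty$) structure. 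The correct mechanism, visible in the proof of Proposition \ref{step}, is to produce a chain of \emph{functorial quasi-isomorphisms of the gluing bimodules} themselves (e.g.\ $\dHom(\pi^*\L_i,\L_n)\cong\dHom(\L_i,\mathcal G)$ as DG modules over the subcategory generated by the $\L_i$) and then invoke Proposition \ref{gluing_quasifunctors} to identify the two gluings up to quasi-equivalence.

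Third, and most substantially, the step ``resolve $\mathcal G$ by twists of the $\L_i$ and package each layer into a projective bundle'' is precisely the technical core of \cite[Th.~5.8]{O_glue} and cannot be treated as routine. A single projective bundle only realizes gluing modules of the form $\dHom(\F,-)$ for a vector bundle $\F$; already converting a two-term complex into such an $\F$ requires an auxiliary ample line bundle $\N$ lying in the left orthogonal of the current subcategory with the generation and vanishing properties of condition (3) of property (V) in Definition \ref{good_geom_real} (this is the role of the extension (\ref{ext})), and your base case $X=\Spec\kk$ leaves no room for such an $\N$ (the paper starts the induction with $\PP^1$). Handling an arbitrary perfect complex $\mathcal G$ --- an iterated cone of shifted line bundles --- is exactly what forces the conclusion of the theorem to send $E_i$ only to \emph{shifted} line bundles $\L_i[r_i]$, a phenomenon your outline does not account for. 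Until the cone-by-cone reduction and the bimodule quasi-isomorphisms are actually carried out, the proposal is a plausible outline of the known proof rather than a proof.
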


The scheme $X$ has a full exceptional collection as a tower of projective bundles
(see \cite{Blow}). Furthermore, it follows from the construction that a full exceptional collection on $X$ can be chosen in such a way that
it contains the collection $\sigma=(\L_1,\dots, \L_n)$ as a subcollection.

In the proof of this theorem one constructed  a quasi-functor from the DG category $\prfdg\dR$ to the DG category
$\prfdg X$ that sends the exceptional objects $E_i$ to shifts of the line bundles $\L_i[r_i]$ for some
integers $r_i.$ In other words, we have an ff-morphism $\mf: X\to\dX$ that gives a pure geometric realization for
the noncommutative scheme $\dX$ and $\bL\mf^* E_i\cong \L_i[r_i].$
 Of course, we can not expect in general that $E_i$ go to line bundles without shifts.
On the other hand, in the case of  strong exceptional collections it is natural to seek  geometric realizations as
 collections of vector bundles (without shifts) on  smooth projective varieties. It can be shown that in general
 we can not realize a strong exceptional collection as a collection of unshifted line bundles,
but trying to present it in terms of vector bundles seems quite reasonable.

\begin{theorem}\label{exc_colllection_bundles}\cite[Cor. 2.7]{O_q}
Let $\dX=\prfdg\dR$ be a proper derived noncommutative scheme such that the  category
$\prf\dR$ has a full strong exceptional collection
$
\prf\dR=\langle E_1,\dots, E_n\rangle.
$
Then there exist a smooth projective scheme $X$ and an ff-morphism  $\mf: X\to \dX$
such that the functor $\bL\mf^*$ sends the exceptional objects $E_i$ to vector bundles $\E_i$ on $X.$
\end{theorem}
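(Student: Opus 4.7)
The plan is to strengthen the construction of Theorem \ref{exc_col}, which gives only shifted line bundles $\L_i[r_i]$, by using the extra rigidity available for a strong exceptional collection. The key observation is that the graded algebra $\bigoplus_{m,i,j}\Hom_{\dX}(E_i,E_j[m])$ is concentrated in degree $m=0$, so the DG algebra $\dEnd(\bigoplus_i E_i)$ is formal and quasi-isomorphic to a finite dimensional basic directed $\kk$-algebra $A$. Consequently, there is no intrinsic cohomological reason for the realizing complexes to carry nontrivial shifts, and we should be able to build a tower of projective bundles in which the $E_i$ go to honest vector bundles.

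I would proceed by induction on $n$. The base case $n=1$ is the trivial one: take $X_1=\Spec\kk$ and $\E_1=\cO$. For the inductive step, suppose we have a smooth projective scheme $X_{n-1}$ (a tower of projective bundles), vector bundles $\E_1,\ldots,\E_{n-1}$ on $X_{n-1}$ forming a strong exceptional collection whose endomorphism algebra matches $\End(\bigoplus_{i<n}E_i)$, and an ff-morphism $\mf_{n-1}\colon X_{n-1}\to\dX_{n-1}$ to the DG subcategory generated by $E_1,\ldots,E_{n-1}$, with $\bL\mf_{n-1}^* E_i\cong\E_i$. To attach $E_n$, collect the incoming Hom spaces $V_i:=\Hom_{\dX}(E_i,E_n)$ (in degree $0$, by strongness) and package them into a single vector bundle $\F_n:=\bigoplus_{i<n}V_i^*\otimes_{\kk}\E_i$ on $X_{n-1}$. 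Form a projective bundle $\pi\colon X_n=\PP_{X_{n-1}}(\F_n)\to X_{n-1}$ and let $\E_n$ be the tautological quotient line bundle, possibly twisted by $\pi^*\E_j$ for a sufficiently positive previously built $\E_j$ so that (a) the pushforward $\pi_*\E_n$ recovers the prescribed data matching the $V_i$, and (b) all higher Ext groups $\Ext^{>0}_{X_n}(\E_i,\E_n)$ vanish. If replacing $\F_n$ by a suitable direct sum with a trivial factor is needed to achieve positivity and to respect the composition/relation structure in $A$, the bundle $\E_n$ obtained this way will have rank $>1$, which is why the statement asks for vector bundles rather than line bundles.

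The main obstacle is bookkeeping the \emph{composition} in $A$: once the underlying vector spaces $\Hom_{X_n}(\E_i,\E_n)$ are matched with $V_i$, one must also ensure that for every triple $i<j<n$ the composition $V_i\otimes\Hom(E_j,E_i)\to V_n$ read off from the algebra agrees with the geometric composition coming from the projective bundle construction. This is a relation-level compatibility and is exactly the reason a careful recursive choice of $\F_n$ (incorporating the module structure of $V_n$ over $\End(\bigoplus_{i<n}E_i)$) is needed rather than just taking independent projective bundle factors. Once this is arranged, applying Proposition \ref{Keller2} to the inclusion of the DG subcategory of $\prfdg X_n$ generated by $\E_1,\ldots,\E_n$ into the DG subcategory of $\prfdg\dX$ generated by $E_1,\ldots,E_n$, which is a quasi-equivalence because both are determined by the same (quasi-isomorphic) endomorphism DG algebras, yields the extended ff-morphism $\mf_n\colon X_n\to\dX_n$. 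Iterating to $n$ completes the construction and the resulting tower of projective bundles gives the desired smooth projective $X$ with $\bL\mf^* E_i\cong\E_i$.
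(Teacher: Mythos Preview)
Your reduction to a basic directed quiver algebra $A=\End(\bigoplus_i E_i)$ is correct, and the inductive projective-bundle strategy is the right shape. But the step you flag as the ``main obstacle'' is an actual gap, not just bookkeeping. Writing $\F_n=\bigoplus_{i<n}V_i^*\otimes\E_i$ and taking a tautological line on $\PP(\F_n)$ fixes the \emph{vector spaces} $\Hom(\E_i,\E_n)$ (after some twist) but does nothing to impose the \emph{relations} in $A$: the composition maps $\Hom(E_i,E_j)\otimes\Hom(E_j,E_n)\to\Hom(E_i,E_n)$ are extra data that your $\F_n$ does not see. Saying ``a careful recursive choice of $\F_n$ incorporating the module structure'' is a placeholder, not a construction; you would have to explain exactly which bundle on $X_{n-1}$ encodes the left $A_{n-1}$\!--module $\bigoplus_i V_i$ together with its relations, and why passing to the projectivization reproduces the correct gluing bimodule. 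As written this is not done.

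The paper's route (via \cite{O_q}, and redeveloped here through Theorem~\ref{main_quiver}) sidesteps precisely this difficulty by changing the building blocks. One does \emph{not} realize the $E_i$ directly. Instead one observes that $A$ is a well-formed quasi-hereditary algebra whose standard modules are the \emph{simple} modules $S_i$ (Remark after Definition~\ref{wf}); the inductive projective-bundle construction of Proposition~\ref{step} then sends each $S_i$ to a line bundle $\L_i$. At each step the gluing bimodule is $\dHom(\Delta_k,-)$, and because $\Delta_k=S_k$ one only needs to match a single left module over the previously built piece --- the relation problem disappears, since the short exact sequence~\eqref{ext} packages exactly this module structure into the bundle $\F$. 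The projective modules $P_i=E_i$ lie in $\Fl(S_1,\dots,S_n)$ automatically, so their images are vector bundles filtered by the $\L_j$, with rank $\dim_\kk P_i$. In short: the missing idea is to induct on the simple (standard) modules rather than on the $E_i$ themselves.
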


A special class of derived noncommutative schemes is related to finite dimensional algebras.
Let $\La$ be a finite dimensional algebra over a base field $\kk.$
Consider the derived noncommutative scheme $\dV=\prfdg\La.$ This noncommutative scheme  is proper for any such $\La.$
It is regular if and only if
the algebra $\La$ has  finite global dimension.
Denote by $\rd$ the (Jacobson) radical of $\La.$ We know that $\rd^n=0$ for some $n.$
Let $S$ be the quotient algebra $\La/\rd.$ It is semisimple and has only a finite number of simple non-isomorphic modules.

Recall that a semisimple algebra $S$ over a field $\kk$ is called separable over $\kk$ if it is  projective as
an $S\hy S$\!--bimodule. It is well-known that a semisimple algebra $S$ is separable if
it is a direct sum of simple algebras whose centers are separable extensions of the field $\kk.$
It also means that the noncommutative scheme $\prfdg S$ is smooth. Moreover, the noncommutative scheme $\dV=\prfdg\La$
is smooth over $\kk$ if $\La$ has  finite global dimension and $S=\La/\rd$ is separable (see, e.g. \cite{Ro}).

\begin{theorem}\label{algebra}\cite[Th. 5.3]{O_glue}
Let $\La$ be a finite dimensional algebra over $\kk.$ Assume that the semisimple algebra $S=\Lambda/\rd$ is $\kk$\!--separable.
Then there are a smooth projective scheme
$X$ and a perfect complex $\mE\in\prf X$ such that $\End(\mE)\cong\Lambda$ and $\Hom(\mE, \mE[l])=0$
for all $l\ne 0.$
\end{theorem}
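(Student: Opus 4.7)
My plan is to reduce the problem to the case of an algebra of finite global dimension via the Auslander construction, and then embed the resulting smooth proper noncommutative scheme geometrically using Theorem \ref{main2}.

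First, choose $n$ so that $\rd^{n}=0$ and set
\[
M:=\bigoplus_{i=0}^{n-1}\La/\rd^{i},\qquad \Ga:=\End_{\La}(M)^{\op}.
\]
By a standard result (Auslander/Iyama), $\Ga$ is finite dimensional and of finite global dimension. The direct summand $\La\subset M$ yields an idempotent $e\in\Ga$ with $e\Ga e\cong\La$, and $\Ga e$ is a projective $\Ga$-module. The endomorphism algebras of the simple $\La$-modules are Morita equivalent to the simple factors of $S=\La/\rd$, hence separable over $\kk$. Consequently $\Ga/\operatorname{rad}(\Ga)$, being a product of matrix algebras over these division algebras, is $\kk$-separable as well. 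Combined with finite global dimension this implies that the derived noncommutative scheme $\dW:=\prfdg\Ga$ is both smooth and proper.

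Next, I would produce a pure geometric realization of $\dW$. Exploiting the fact that $\Ga$ is quasi-hereditary with respect to the natural ordering of summands of $M$, its standard modules $\Delta_{1},\dots,\Delta_{m}$ have endomorphism algebras $D_{j}$ coinciding with those of the simple $\La$-modules, and they assemble into a semi-orthogonal decomposition
\[
\prf\Ga=\langle\prf D_{1},\dots,\prf D_{m}\rangle.
\]
Each $D_{j}$ is a separable division algebra over $\kk$, so the associated Brauer--Severi variety $V_{j}$ is smooth and projective over $\kk$, and a Kapranov-type decomposition of $\prf V_{j}$ realizes $\prfdg D_{j}$ as an admissible DG subcategory of $\prfdg V_{j}$. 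Applying Theorem \ref{main2} to this data yields a smooth projective scheme $X$ together with an ff-morphism $\mf\colon X\to\dW$.

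Finally, set $\mE:=\bL\mf^{*}(\Ga e)\in\prf X$. Since $\Ga e$ is a projective $\Ga$-module, $\End_{\Ga}(\Ga e)=e\Ga e\cong\La$ and $\Ext^{l}_{\Ga}(\Ga e,\Ga e)=0$ for $l\ne 0$; full faithfulness of $\bL\mf^{*}$ transports these identities to $\mE$ in $\prf X$, giving exactly the required conclusions. The main obstacle I anticipate is the middle step: verifying that $\Ga$ has the quasi-hereditary structure needed to invoke Theorem \ref{main2}, together with producing the admissible embeddings $\prfdg D_{j}\hookrightarrow\prfdg V_{j}$ when $\kk$ is not algebraically closed. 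Once this geometric input is in hand the outer two steps are essentially formal, reducing to an idempotent/projectivity computation on one side and the gluing theorem on the other.
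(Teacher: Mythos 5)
Your argument is sound and reaches the stated conclusion, but it is worth noting that the paper does not reprove this theorem here (it cites \cite{O_glue}); the route in \cite{O_glue} proceeds by realizing $\prf\La$ inside an iterated gluing of copies of $\prfdg(\La/\rd)$ built from the radical filtration, whereas you instead import the Auslander construction and the quasi-hereditary structure of $\Ga=\End_\La(\bigoplus\La/\rd^p)$ --- i.e.\ essentially the machinery this paper develops in its last two sections for the stronger ``vector bundle'' statement --- and then feed the resulting semi-orthogonal decomposition into Theorem~\ref{main2}. This is a legitimate alternative: quasi-heredity of $\Ga$ for an arbitrary (non-basic) $\La$ is exactly Dlab--Ringel \cite{DR2}, the decomposition $\prf\Ga=\langle\prf D_{j(1)},\dots\rangle$ into division-ring pieces follows from the standard modules being an exceptional collection of division-algebra type (each $D_j=\End_\La(S_j)$ occurring $\LL(P_j)$ times, not once per simple as your indexing suggests), and the admissible embeddings $\prfdg D_j\hookrightarrow\prfdg V_j$ are supplied by Bernardara's semi-orthogonal decomposition of the Brauer--Severi variety $V_j=SB(D_j)$; note that $V_j$ is smooth and projective \emph{over $\kk$} precisely because $Z(D_j)/\kk$ is separable, which is where the hypothesis on $S=\La/\rd$ is actually consumed. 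What your route buys is a uniform reduction to Theorem~\ref{main2}; what it costs is that both auxiliary inputs (Dlab--Ringel and Bernardara) must be invoked in full generality, whereas the gluing argument of \cite{O_glue} needs only the separability of $S$ directly. The final step is indeed formal: $\Hom_X(\mE,\mE[l])\cong\Ext^l_\Ga(\Ga e,\Ga e)$, which vanishes for $l\neq 0$ by projectivity and equals $e\Ga e\cong\La$ for $l=0$.

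One small but real slip: with $\rd^n=0$ your module $\bigoplus_{i=0}^{n-1}\La/\rd^i$ contains the zero summand $\La/\rd^0$ and \emph{omits} $\La=\La/\rd^n$, so the idempotent $e$ with $e\Ga e\cong\La$ does not exist as written. The sum must run over $p=1,\dots,n$ (equivalently $\bigoplus_{p=1}^{N}\La/\rd^p$ as in Auslander's theorem quoted in the paper). With that correction the proof goes through.
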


\begin{corollary}\label{algebra_inclusion}\cite[Th. 5.4]{O_glue}
Let $\dV=\prfdg\La$ be a derived noncommutative scheme, where $\Lambda$ is a finite dimensional algebra over $\kk$ for which $\Lambda/\rd$ is $\kk$\!--separable.
Then there are a smooth projective scheme $X$ and an ff-morphism $\mf: X\to\dV$ which give a plain geometric realization for $\dV.$
Moreover, if $\La$ has   finite global dimension, then this realization is pure.
\end{corollary}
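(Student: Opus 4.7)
The plan is to deduce this corollary directly from Theorem \ref{algebra} together with the general framework of geometric realizations developed earlier. Theorem \ref{algebra} supplies, under the separability hypothesis on $\La/\rd$, a smooth projective scheme $X$ and a perfect complex $\mE\in\prf X$ with $\End(\mE)\cong\La$ and $\Hom(\mE,\mE[l])=0$ for $l\neq 0$. So the DG algebra $\dEnd(\mE)$ computed in $\prfdg X$ is formal and quasi-isomorphic to $\La$. Viewing $\La$ as a single object of the DG category $\prfdg\La$, one gets a canonical DG functor $\mF:\La\to\prfdg X$ sending the generator to $\mE$, whose mapping complex is a quasi-isomorphism by the vanishing of higher Ext's.

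The next step is to extend $\mF$ to a quasi-functor $\prfdg\La\to\prfdg X$ and verify that it is a fully faithful DG embedding. By Proposition \ref{Keller2}, the extension DG functor $\mF^{*}:\prfdg\La\to\prfdg X$ induces a fully faithful derived functor $\bL F^{*}:\prf\La\to\prf X$, because the hypothesis on Hom-complexes for $\mE$ implies full faithfulness on the single generator and this propagates through the triangulated hull. Interpreting this quasi-functor as a morphism of noncommutative schemes, we obtain $\mf:X\to\dV$ with $\bL\mf^{*}\cong\bL F^{*}$, and by definition $\mf$ is an ff-morphism. Since $\bL\mf^{*}$ preserves arbitrary direct sums and $\prf\La$ is compactly generated, the image $\L\subseteq\D_{\Qcoh}(X)$ is a compactly generated localizing subcategory whose category of compact objects coincides with $\bL\mf^{*}(\prf\La)\subset\prf X$. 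This is exactly the definition of a plain geometric realization in the sense introduced just after Definition \ref{geom_real}.

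For the final clause assume in addition that $\La$ has finite global dimension. Then the noncommutative scheme $\dV=\prfdg\La$ is regular (its triangulated category $\prf\La$ has a strong generator, the algebra itself), and it is proper since $\La$ is finite-dimensional. The ambient category $\prf X$ is also proper, because $X$ is smooth and projective. Thus $\bL\mf^{*}(\prf\La)$ is a regular, proper, idempotent complete full triangulated subcategory of the proper triangulated category $\prf X$. Proposition \ref{admissible} then applies and gives that $\bL\mf^{*}(\prf\La)$ is admissible in $\prf X$. Admissibility provides both a left and a right adjoint to the inclusion, so $\mf$ is simultaneously a pp-morphism and the geometric realization is perfectly plain as well as perfectly immersive, that is, pure.

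The main obstacle is really just the first step, namely the invocation of Theorem \ref{algebra}; everything else is a bookkeeping exercise in translating the existence of the perfect generator $\mE$ into the language of quasi-functors and in using the abstract saturation result Proposition \ref{admissible} to upgrade ``plain'' to ``pure''. No new construction is required: the hard content is hidden in the separability hypothesis, which is what makes the two-step Auslander–plus–well-formed-quasi-hereditary construction alluded to in the introduction work in the proof of Theorem \ref{algebra}.
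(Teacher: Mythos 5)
Your proof is correct and is essentially the intended argument: the corollary is deduced from Theorem \ref{algebra} by taking the DG endomorphism algebra of $\mE$ (formal, quasi-isomorphic to $\La$ by the Ext-vanishing), applying Proposition \ref{Keller2} to get the ff-morphism $\mf:X\to\dV$ and hence a plain realization, and then using regularity, properness, idempotent completeness and Proposition \ref{admissible} to obtain admissibility of $\prf\La\subset\prf X$, i.e.\ purity, when $\gldim\La<\infty$. The only cosmetic slip is that it is $\D(\La)$ (not $\prf\La$) that is compactly generated; otherwise the bookkeeping is exactly right.
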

Note that over a perfect field all semisimple algebras are separable.
Thus, if $\kk$ is perfect, then these results can be applied  to any finite-dimensional $\kk$\!--algebra.

Consider a smooth and proper noncommutative scheme $\dX=\prfdg\dR$ such that the  category
$\prf\dR$ has a full strong exceptional collection
$
\sigma=\langle E_1,\dots, E_n\rangle.
$
The object $E=\bigoplus\limits_{i=1}^n E_i$ is a generator of $\prf\dR,$ and the DG category $\prfdg\dR$ is quasi-equivalent to the DG category $\prfdg\La,$ where $\La=\End(\mathop{\bigoplus}\limits_{i=1}^n E_i)$ is the algebra of endomorphisms of the collection $\sigma.$
It is evident that the algebra $\La$ is a quiver algebra
on $n$ directed vertices.
Recall that $\La$ is  called a {\sf quiver algebra on $n$ directed vertices}, if  $\La\cong \kk Q/I,$  where $Q$ is a quiver
for which $Q_0=\{1,\dots , n\}$ is the ordered set of $n$ elements  and  for any arrow $a\in Q_1$ the source $s(a)\in Q_0$ is less than target $t(a)\in Q_0,$
while $I$ is a two-sided ideal of the path algebra $\kk Q$ generated by a subspace of $\kk Q$ spanned by linear
combinations of paths of length at
least $2$ having a common source and a common target (see, e.g., \cite{O_q}).

On the other hand, any quiver algebra $\La$ on $n$ directed vertices has finite global dimension and,
moreover, the category $\prf\La$ has a strong full exceptional collection
consisting of the indecomposable  projective modules $P_i$ for $i=1,\dots, n.$ The algebra $\La$ is exactly
the algebra of endomorphisms of this full strong exceptional collection.
Thus, Theorem \ref{exc_colllection_bundles} implies that for any quiver algebra  $\La$  on $n$ directed vertices there exist a smooth projective scheme
$X$ and a vector bundle $\E$ on $X$ such that $\End_X(\E)=\La$ and $\Ext^p_X(\E, \E)=0$ for all $p\ne 0.$
Moreover, they can be chosen so that the rank of $\E$ is equal to the dimension of $\La$ (see \cite[Cor. 2.8]{O_q}).

\subsection{Quasi-phantoms and phantoms}\label{phantoms}
Let $\T$ be a triangulated category and let ${\T}=\left\langle{\N}_1, \dots, {\N}_n\right\rangle$ be a semi-orthogonal decomposition.
Any such decomposition induces the following decomposition for  the Grothendieck group
\[
 K_0(\T)\cong K_0(\N_1)\oplus K_0(\N_2)\oplus\cdots\oplus K_0(\N_n).
\]
In the particular case of a full exceptional collection we obtain an isomorphism of the Grothendieck group
$K_0(\T)$ with a free abelian group $\ZZ^n.$

For any small DG category  $\dA$ we can define K-theory spectrum $K(\dA)$  by applying
Waldhausen's construction to a certain category with cofibrations and weak equivalences that can be obtained from the DG category $\prfdg\dA$ (see \cite{DS, Sch, Ke2, Ta}).
More precisely, the objects of this category are perfect DG modules,  the cofibrations are the morphisms of DG $\dA$\!--modules of
degree zero that admit retractions as morphisms of graded modules, and the weak equivalences are the quasi-isomorphisms.
This construction is invariant under quasi-equivalences between $\prfdg\dA$ and $\prfdg\dB.$
Thus, to any derived noncommutative scheme $\dX=\prfdg\dR$ we can attach a K-theory spectrum  $K(\dX):=K(\dR)=K(\prfdg\dR).$
K-theory gives us an additive invariant for derived noncommutative schemes in the sense that for any gluing $\dZ=\dX\underset{\mT}{\oright}\dY$
there is an isomorphism $K(\dZ)\cong K(\dX)\oplus K(\dY).$

Other natural additive invariants are given by Hochschild and cyclic homology.
Hochschild homology $\mathrm{HH}_{*}(\dX)$ can be defined as
\[
\mathrm{HH}_{*}(\dX)\cong H^{-*}(\dR \mathop{\overset{\bL}{\otimes}}_{\dR^{\op}\otimes\dR}\dR),
\]
and for any gluing $\dZ=\dX\underset{\mT}{\oright}\dY$
there is an isomorphism $\mathrm{HH}_{*}(\dZ)\cong \mathrm{HH}_{*}(\dX)\oplus \mathrm{HH}_{*}(\dY).$
Recall that for a smooth projective scheme $X$ over a field of characteristic $0$ there is an isomorphism $\mathrm{HH}_i(X)=\bigoplus_p H^{p+i}(X, \Omega^p_X),$ which allows to
describe the Hochschild homology in terms of usual cohomology of the scheme $X$ (see \cite{Sw}).

\begin{definition} A smooth and proper derived noncommutative scheme
$\dX$ will be called  a {\sf quasi-phantom} if
$\mathrm{HH}_{*}(\dX)=0$ and $K_0(\dX)$ is a finite abelian group.
It will be  called a {\sf phantom} if, in addition, $K_0(\dX)=0.$
\end{definition}
This definition is a result of the successive study and appearance of such objects, but a more natural and more important definition is the following definition of a universal phantom.

\begin{definition}\label{def:uphantom}
We say that a phantom $\dX$ is a {\sf universal phantom} if
$\dX\otimes_{\kk} \dY$ is also phantom
for any smooth and proper noncommutative scheme $\dY.$
\end{definition}
It can be shown that it is sufficient to verify this property for $\dY=\dX.$ Moreover, it is also known
that any universal phantom $\dX$ has a trivial K-motive and, hence, its K-theory $K(\dX)$ vanishes as well (see \cite{GO}).

Different examples of geometric quasi-phantoms were constructed as semi-orthogonal complements
to exceptional collections of maximal length on some smooth projective surfaces of general type with $q=p_g=0$ for which Bloch's conjecture holds, i.e. the Chow group $CH^2(S)\cong \ZZ.$
In more detail, let $S$ be such a surface. In this case the Grothendieck group
$K_0(S)$  is  isomorphic to $\ZZ\oplus \Pic(S)\oplus \ZZ\cong \ZZ^{r+2}\oplus \Pic(S)_{\tors},$ where
$r$ is the rank of the Picard lattice $\Pic(S)/\Pic(S)_{\tors}.$
Since the Picard group $\Pic(S)$ of this surface is isomorphic to $H^2(S(\CC), \ZZ),$ we obtain
that $\Pic(S)_{\tors}$ is finite and there are the following relations
$
r+2=b_2+2=e,
$
where $b_2$ is the second Betti number and $e$ is the topological Euler characteristic of $S.$
Assume that the triangulated category
 $\prf S$ has an exceptional collection $(E_1,\ldots, E_{e})$ of the maximal possible length $e.$
In this case there is a semi-orthogonal
decomposition of the form
\[
\prf S=\langle E_1,\ldots, E_{e}, \N \rangle,
\]
where $\N$ is the left orthogonal to the subcategory $\T=\langle E_1,\ldots, E_{e} \rangle$ generated by the exceptional collection.
We have $K_0(\T)\cong\ZZ^{e}$ and, hence, $K_0(\N)\cong\Pic(S)_{\tors}.$

Consider now the DG category $\prfdg S$ and its DG subcategory $\dN\subset \prfdg S$ that has the same objects as $\N\subset\prf S.$
The DG category $\dN$ is a derived noncommutative scheme, which is smooth and proper because the subcategory $\N$ is admissible.
We already mentioned that $K_0(\dN)\cong\Pic(S)_{\tors}.$ Moreover, it is evident that the Hochschild homology
$\mathrm{HH}_{*}(\dN)$ is trivial.
Thus, the noncommutative scheme $\dN$ is a quasi-phantom  coming with a pure geometric realization $\dN\subset\prfdg S.$

At this moment there are a lot of different examples of quasi-phantoms constructed as described above.
First example was constructed in \cite{BGS} for the classical Godeaux surface $S$ that is the
$\ZZ/5\ZZ$-quotient of the Fermat quintic in~$\PP^3$ . In this case $e=11,$
and the Grothendieck group $K_0(\dN)$ is isomorphic to the cyclic group $\ZZ/5\ZZ.$

The next examples were Burniat surfaces with $e=6,$  exceptional collections of maximal length for which  were constructed
in \cite{AO}. In this case we have a 4-dimensional family of such surfaces
and we obtain a 4 dimensional family of quasi-phantoms $\dN$ with $K_0(\dN)=(\ZZ/2\ZZ)^6.$
(It was proved in the paper  \cite{Ku} that the second Hochschild cohomology
of the quasi-phantoms $\dN$ coincides with the second Hochschild cohomology of the related Burniat surfaces $S.$)

Suppose we have two different quasi-phantoms $\dN$ and $\dN'.$ It is natural to consider their tensor product $\dN\otimes_{\kk}\dN'.$
If the orders of the Grothendieck groups $K_0(\dN)$ and $K_0(\dN')$ are coprime, we can hope that the Grothendieck
group of $\dN\otimes_{\kk}\dN'$ will be trivial. In the case of surfaces it can be proved.

\begin{theorem}\label{main_phant}\cite{GO}
Let $S$ and $S'$ be  smooth projective surfaces over $\CC$ with $q=p_g=0$ for which Bloch's conjecture for 0-cycles holds.
Assume that the categories $\prf S$ and $\prf S'$ have exceptional collections of maximal
lengths $e(S)$ and $e(S'),$ respectively. Let $\dN\subset\prfdg S$ and $\dN'\subset\prfdg S'$ be the left orthogonals
to these exceptional collections. If the orders of $\Pic(S)_{\tors}$ and $\Pic(S')_{\tors}$ are coprime, then the noncommutative scheme
$\dN\otimes_{\kk}\dN'\subset \prfdg(S\times_{\kk} S')$
is a universal phantom.
\end{theorem}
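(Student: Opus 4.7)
Let $\dX := \dN \otimes_\kk \dN'$. The plan is to show (i) $\dX$ is smooth, proper, and a phantom, and (ii) the noncommutative K-motive of $\dX$ is a zero object in Tabuada's category of noncommutative motives, which forces $\dX$ to be a \emph{universal} phantom by representability of K-theory in that category. Smoothness and properness of $\dX$ are immediate, since both pass from $\dN,\dN'$ (which are smooth and proper as admissible subcategories of $\prfdg S$ and $\prfdg S'$) to the tensor product over $\kk$.

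For the Hochschild vanishing, I would use the Künneth isomorphism $\mathrm{HH}_*(\dX) \cong \mathrm{HH}_*(\dN) \otimes_\kk \mathrm{HH}_*(\dN')$ together with additivity of $\mathrm{HH}_*$ under semi-orthogonal decompositions. Since for a smooth projective surface with $q = p_g = 0$ the HKR isomorphism gives $\dim_\kk \mathrm{HH}_*(S) = e(S)$, exactly matching the length of the exceptional collection, we conclude $\mathrm{HH}_*(\dN) = 0$, and likewise $\mathrm{HH}_*(\dN') = 0$, so $\mathrm{HH}_*(\dX) = 0$. For $K_0$, Theorem \ref{toen} identifies $\prfdg(S \times_\kk S')$ with $\prfdg S \otimes_\kk \prfdg S'$, and the two exceptional collections yield a semi-orthogonal decomposition of $\prf(S \times_\kk S')$ whose ``mixed'' summand is $\dX$, producing
\[
K_0(S \times_\kk S') \cong \ZZ^{e(S)e(S')} \oplus (\Pic(S)_{\tors})^{e(S')} \oplus (\Pic(S')_{\tors})^{e(S)} \oplus K_0(\dX).
\]
Bloch's conjecture makes the Chow motives of $S$ and $S'$ Kimura finite-dimensional with vanishing transcendental part, so an integral Künneth formula applies on Chow groups; via the $\gamma$-filtration this transfers to $K_0(S \times_\kk S') \cong K_0(S) \otimes_\ZZ K_0(S')$. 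Comparing with the decomposition above isolates $K_0(\dX) \cong \Pic(S)_{\tors} \otimes_\ZZ \Pic(S')_{\tors}$, which vanishes by coprimality.

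For universality, I would show that the noncommutative Chow motive $U(\dX)$ is a zero object, whence $K_*(\dX \otimes_\kk \dY) = \mathrm{Hom}(U(\dY^{\op}), U(\dX)) = 0$ for every smooth proper $\dY$. By the symmetric monoidal structure, $U(\dX) \cong U(\dN) \otimes U(\dN')$. Under Bloch's conjecture, the Chow motive of $S$ splits so that the summand corresponding to $\dN$ is supported only on the torsion part of $\Pic(S)$, and analogously for $\dN'$; combined with the coprimality of $|\Pic(S)_{\tors}|$ and $|\Pic(S')_{\tors}|$ this annihilates the motivic tensor product $U(\dN) \otimes U(\dN')$.

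The main obstacle is this last step. Whereas the additivity arguments for $\mathrm{HH}_*$ and $K_0$ are formal once one has the Bloch-conjecture-based Künneth for Chow groups, the vanishing of $U(\dX)$ requires transferring a Chow-theoretic torsion-tensor-product vanishing all the way to the level of noncommutative K-motives, where controlling higher integral K-theory is genuinely subtle. This is the technical heart of \cite{GO}, and it relies on Kimura/O'Sullivan finite-dimensionality of the Chow motives of $S$ and $S'$ (supplied by Bloch's conjecture) in an essential way.
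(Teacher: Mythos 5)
This theorem is not proved in the paper you are reading: it is imported verbatim from \cite{GO}, so there is no in-text argument to compare against, and your proposal has to be judged as a reconstruction of the proof in \cite{GO}. At the level of strategy your outline matches that proof: smoothness and properness by tensor product, $\mathrm{HH}_*(\dN)=0$ by additivity plus the HKR count $\dim_\kk \mathrm{HH}_*(S)=2+b_2=e(S)$, the product semi-orthogonal decomposition of $\prf(S\times_\kk S')$ via Theorem \ref{toen}, and the comparison with a K\"unneth decomposition of $K_0(S\times_\kk S')$ obtained from Bloch's conjecture. (One small point: integrally the K\"unneth map for $CH^*$ can carry a $\mathrm{Tor}_1(\Pic(S)_{\tors},\Pic(S')_{\tors})$ correction; it happens to vanish here by coprimality, but you should say so rather than assert a clean tensor decomposition.)

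The genuine gap is the universality step, and you have in effect conceded it. Vanishing of $\mathrm{HH}_*$ and of $K_0(\dN\otimes_\kk\dN')$ only shows that $\dX=\dN\otimes_\kk\dN'$ is a phantom; to get a \emph{universal} phantom you must show that the noncommutative motive of $\dX$ is zero, equivalently (by the identification of Hom-groups of motives of smooth proper DG categories with $K_0$ of tensor products) that $\End(U(\dX))\cong K_0(\dX^{\op}\otimes_\kk\dX)=0$, so that $\id_{U(\dX)}=0$. Your appeal to ``the summand of the Chow motive corresponding to $\dN$ being supported on $\Pic(S)_{\tors}$'' is not a precise statement and does not by itself produce this vanishing. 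The way this is actually closed in \cite{GO} is to run the same Bloch--K\"unneth argument on $S\times_\kk S$ to show that the finite group $\End(U(\dN))$ is annihilated by the order of $\Pic(S)_{\tors}$ (and likewise for $\dN'$); then $\id_{U(\dX)}=\id_{U(\dN)}\otimes\id_{U(\dN')}$ is annihilated by two coprime integers, hence is zero, which gives $U(\dX)=0$ and therefore $K_*(\dX\otimes_\kk\dY)=0$ and $\mathrm{HH}_*(\dX\otimes_\kk\dY)=0$ for every smooth proper $\dY.$ Without some argument of this kind your proof establishes only the phantom property, not the universal one, so as written the proposal is incomplete at exactly the point that carries the content of the theorem.
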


This theorem  also tells us that the noncommutative scheme $\dN\otimes_{\kk}\dN'$ has a trivial K-motive, i.e.
it is in the kernel of the natural map from the world of smooth and proper derived noncommutative schemes to the world of K-motives (they are
called noncommutative motives now) and, in particular, it has trivial K-theory, i.e. $K_i(\dN\otimes\dN')=0$ for all $i$ (see \cite{GO}).
It is known that K-theory is a universal additive invariant \cite{Ta_new, Ta} and, hence, all additive invariants of universal phantoms vanish.

\begin{corollary}\cite{GO} Let $S$ be a Burniat surface with $e=6$ and let $S'$ be the classical Godeaux surface over $\CC.$
Let $\dN\subset\prfdg S$ and $\dN'\subset\prfdg S'$ be quasi-phantoms that are the left orthogonals
to exceptional collections of maximal lengths.
Then the derived noncommutative scheme
${\dN\otimes_{\kk}\dN'} \subset \prfdg(S\times_{\kk} S')$ is a universal phantom, and $K_i(\dN\otimes_{\kk}\dN')=0$ for all $i\in \ZZ.$
\end{corollary}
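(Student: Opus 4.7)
The plan is to derive the corollary as an immediate application of Theorem \ref{main_phant} once the hypotheses have been verified for the specific pair $(S,S')$, and then to invoke the discussion preceding the corollary to deduce vanishing of $K$-theory.

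First I would check the hypotheses of Theorem \ref{main_phant}. Both the classical Godeaux surface $S'$ and the Burniat surfaces $S$ are smooth projective surfaces over $\CC$ of general type with $q=p_g=0$, and Bloch's conjecture on $0$-cycles is known to hold in both cases (the Godeaux surface is dominated by the Fermat quintic and the Burniat surfaces admit a similar explicit description). The existence of exceptional collections of the maximal length $e$ is precisely the content of the constructions recalled just above the statement: for the Godeaux surface $e(S')=11$ with the collection of \cite{BGS}, and for the Burniat surface $e(S)=6$ with the collection of \cite{AO}. Thus $\dN'\subset\prfdg S'$ and $\dN\subset\prfdg S$ are quasi-phantoms, with Grothendieck groups identified with $\Pic(S')_{\tors}$ and $\Pic(S)_{\tors}$ respectively.

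The main point to verify is the coprimality hypothesis. As recalled in the text, $K_0(\dN')\cong \Pic(S')_{\tors}\cong \ZZ/5\ZZ$ for the classical Godeaux surface, while $K_0(\dN)\cong \Pic(S)_{\tors}\cong (\ZZ/2\ZZ)^6$ for a Burniat surface with $e=6$. Since $\gcd(5, 2^6)=1$, the orders of $\Pic(S)_{\tors}$ and $\Pic(S')_{\tors}$ are coprime. Theorem \ref{main_phant} then applies directly and yields that $\dN\otimes_{\kk}\dN'\subset \prfdg(S\times_{\kk}S')$ is a universal phantom.

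For the final assertion about K-theory, I would simply invoke the paragraph following Theorem \ref{main_phant}: a universal phantom has trivial K-motive, so it lies in the kernel of the universal additive invariant from the category of smooth and proper noncommutative schemes to noncommutative motives. Since K-theory is a universal additive invariant \cite{Ta_new,Ta}, all of $K_i(\dN\otimes_{\kk}\dN')$ vanish for $i\in\ZZ$. The only non-trivial step is the application of Theorem \ref{main_phant}, and the rest is bookkeeping of the orders of the torsion subgroups of the Picard lattices; no further obstacle is anticipated.
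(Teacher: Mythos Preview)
Your proposal is correct and matches the paper's approach exactly: the corollary is stated in the paper without a separate proof because it is an immediate specialization of Theorem \ref{main_phant} once one plugs in the known torsion groups $(\ZZ/2\ZZ)^6$ and $\ZZ/5\ZZ$ and notes their coprimality, with the $K$-theory vanishing following from the remark on trivial K-motives just after the theorem. There is nothing to add.
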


Another type of a geometric phantom was constructed in \cite{BGKS} as  a semi-orthogonal complement to an exceptional collection of maximal length on the determinantal  Barlow surface.
Since the Barlow surface is simply connected, it does not have torsion in Picard group. In this case any quasi-phantom constructed as it was described above
is actually a phantom because the Grothendieck group is trivial. The results of the paper \cite{GO} applied to a phantom coming from a Barlow surface give us that this phantom is universal.

\subsection{Krull--Schmidt partners}\label{Kru_Sch}

Let $\dX=\prfdg\dR$ and $\dY=\prfdg\dS$ be two derived noncommutative schemes and let $\mf:\dX\to\dY$ be a morphism that is represented by a DG $\dY\hy\dX$\!--bimodule $\mT$ as a quasi-functor
$\mF:\prfdg\dS\to \prfdg\dR.$
Consider the gluing $\dZ=\dX\underset{\mf}{\oright}\dY,$ which is by definition the gluing
$
\dX\underset{\mT}{\oright}\dY=\prfdg(\dR\underset{\mT}{\with}\dS)
$
of $\dX$ and $\dY$ via $\mT.$

Any morphism $\mc: \dX\to \dV$ induces a morphism $\underline{\mc}:\dZ\to\dV$ that is the composition of $\mc$ and the natural projection $\mp_{\dX}:\dZ\to\dX.$
By Proposition \ref{morph_glu}, the morphism $\underline{\mc}$ is given by the triple $(\mc, \m0; 0).$ If the morphism $\mc$
is represented by a DG
$\dV\hy\dX$\!--bimodule $\mP,$ then the morphism $\underline{\mc}$ is related to  a DG $\dV\hy\dZ$\!--bimodule
$\underline{\mP}=(\mP, 0),$ which coincides with $\mP$ on the subcategory $\prfdg\dR\subset\prfdg(\dR\underset{\mT}{\with}\dS)$ and is equal to $0$
on the subcategory $\prfdg\dS\subset\prfdg(\dR\underset{\mT}{\with}\dS).$

Let $\mg: \dX\to\dY$ be another morphism that is represented by a DG $\dY\hy\dX$\!--bimodule $\mU.$
Suppose there is a map $\phi: \mf\to \mg$ between the morphisms.
By Proposition \ref{morph_glu}, the map $\phi: \mf\to \mg$ induces a morphism $\wt{\mg}_{\phi}:\dZ\to\dY$ that is given by the triple $(\mg, \mId_{\dY}; \phi).$ The morphism $\wt{\mg}_{\phi}$ is represented by
a DG $\dY\hy\dZ$\!--bimodule
$\wt{\mU}_{\phi}:=(\mU, \dS)$ that coincides with
$\mU$ on the subcategory $\prfdg\dR\subset\prfdg(\dR\underset{\mT}{\with}\dS),$ and
$\wt{\mU}_{\phi}(-, \bullet)=\dHom_{\dY}(\bullet , -)$ on the subcategory $\prfdg\dS\subset\prfdg(\dR\underset{\mT}{\with}\dS).$  The map $\phi: \mT\to \mU$ allows to define a natural structure of
a $\dZ$\!--module on $\wt{\mU}_{\phi}=(\mU, \dS).$
There is an isomorphism $\wt{\mf}_{\id}\cong\mp_{\dY}.$ Moreover, it is easy to see that, by Proposition \ref{morph_glu}, the composition of $\wt{\mg}_{\phi}:\dZ\to\dY$ with the right section $\mr_{\dX}:\dX\to\dZ$
is exactly the morphism $\mg:\dX\to\dY.$

Let us denote by $\mc$ the cone of the map $\phi: \mf\to \mg.$ The map $\phi$ induces a map $\wt{\phi}: \mp_{\dY}\to \wt{\mg}_{\phi}.$ The cone of this map is isomorphic to
the morphism
$\underline{\mc}=\mc\mp_{\dX}.$
Thus, we have two exact triangles of morphisms
\begin{align*}
(\mT\mr1)\quad  \mc[-1]\lto \mf\stackrel{\phi}{\lto} \mg\lto \mc \qquad\text{and}\qquad & (\mT\mr2)\quad
\underline{\mc}[-1]\lto \mp_{\dY}\stackrel{\wt{\phi}}{\lto} \wt{\mg}_{\phi}\lto \underline{\mc}
\end{align*}
in the triangulated categories $\Mor(\dX, \dY)$ and $\Mor(\dZ, \dY).$ Moreover, the triangle $(\mT\mr1)$ can be obtained from $\mT\mr2$ by applying the section $\mr_{\dX},$
i.e. we have $(\mT\mr1)=(\mT\mr2)\ldot\mr_{\dX}.$

Let us describe a case when the constructed morphism $\wt{\mg}_{\phi}:\dZ\to\dY$ is an ff-morphism, i.e. it induces a fully faithful embedding
$\bL\wt{\mg}_{\phi}^{*}:\prf\dS\to \prf(\dR\underset{\mT}{\with}\dS).$ By construction, the projection $\wt{\mf}_{\id}\cong\mp_{\dY}$ is such a morphism.

\begin{theorem}\label{main1_Krull} Let $\dX=\prfdg\dR$ and $\dY=\prfdg\dS$ be noncommutative schemes and let $\phi: \mf\to\mg$ be a map between morphisms
from $\dX$ to $\dY.$ Let $\mc=\mCone(\phi)$ be the cone of $\phi.$ Let the morphisms $\mg, \mc$ are represented by a DG $\dY\hy\dX$\!--bimodules $\mU$ and $\mP,$ respectively.
Then the following properties are equivalent:
\begin{itemize}
\item[(1)]  $\wt{\mg}_{\phi}:\dZ=\dX\underset{\mf}{\oright}\dY\to\dY$ is an ff-morphism, i.e. $\bL\wt{\mg}^{*}_{\phi}$ is fully faithful;
\item[(2)]
$
\Hom_{\dZ}(\bL\underline{\mc}^*\dS,\; \bL\wt{\mg}^{*}_{\phi}\dS[m])\cong\Hom_{\dZ}(\underline{\mP},\; \wt{\mU}_{\phi}[m])=0
$
for all $m\in\ZZ;$
\item[(3)]
$
\Hom_{\dX}(\bL\mc^*\dS,\; \bL\mg^{*}\dS[m])\cong\Hom_{\dX}(\mP,\; \mU[m])=0
$
for all $m\in\ZZ.$
\end{itemize}

\end{theorem}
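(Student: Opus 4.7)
\noindent The plan is to verify fully faithfulness of $\bL\wt{\mg}_{\phi}^*\colon \prf\dS \to \prf\dZ$ by reducing to the classical generator $\dS\in\prf\dS$, and then to translate the resulting vanishing from $\dZ$ to $\dX$ via the adjunction associated with the projection $\mp_{\dX}$. The essential tool is the exact triangle $(\mT\mr 2)$ of morphisms from $\dZ$ to $\dY$, combined with the semi-orthogonal decomposition $\prf\dZ=\langle\prf\dR,\prf\dS\rangle$ of the gluing.

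\smallskip

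\noindent For (1)$\Leftrightarrow$(2), I would first observe that $\bL\wt{\mg}_{\phi}^*$ is fully faithful if and only if the unit map
\[
\alpha_m\colon \Hom_{\dY}(\dS,\dS[m]) \lto \Hom_{\dZ}(\wt{\mU}_{\phi},\wt{\mU}_{\phi}[m])
\]
is an isomorphism for every $m\in\ZZ$, since $\dS$ classically generates $\prf\dS$ and $\bL\wt{\mg}_{\phi}^*\dS\cong\wt{\mU}_{\phi}$. Applying $\bL(-)^*\dS$ to $(\mT\mr 2)$ gives the triangle
\[
\bL\mp_{\dY}^*\dS \lto \wt{\mU}_{\phi} \lto \bL\underline{\mc}^*\dS
\]
in $\prf\dZ$. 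Because $\underline{\mc}=\mc\ldot\mp_{\dX}$, the third term equals $\bL\mp_{\dX}^*\mP$ and lies in $\prf\dR\subset\prf\dZ$, while $\bL\mp_{\dY}^*\dS$ lies in $\prf\dS\subset\prf\dZ$. The semi-orthogonal decomposition forces
\[
\Hom_{\dZ}(\bL\mp_{\dY}^*\dS,\bL\underline{\mc}^*\dS[m])=0 \quad\text{for all } m.
\]
Plugging this into the long exact sequence obtained from $\Hom_{\dZ}(\bL\mp_{\dY}^*\dS,-)$ applied to the triangle, and using that $\mp_{\dY}$ is an ff-morphism, yields a canonical isomorphism $\beta_m\colon \Hom_{\dY}(\dS,\dS[m]) \stackrel{\sim}{\to} \Hom_{\dZ}(\bL\mp_{\dY}^*\dS,\wt{\mU}_{\phi}[m])$ given by postcomposition with $\wt{\phi}_{\dS}$.

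\smallskip

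\noindent Now applying $\Hom_{\dZ}(-,\wt{\mU}_{\phi}[m])$ to the same triangle produces a long exact sequence whose central arrow is precomposition with $\wt{\phi}_{\dS}$, i.e.\ $\wt{\phi}^*\colon \Hom_{\dZ}(\wt{\mU}_{\phi},\wt{\mU}_{\phi}[m]) \to \Hom_{\dZ}(\bL\mp_{\dY}^*\dS,\wt{\mU}_{\phi}[m])$. Naturality of $\wt{\phi}\colon \bL\mp_{\dY}^*\to\bL\wt{\mg}_{\phi}^*$ applied to $f\colon\dS\to\dS[m]$ gives the identity $\wt{\phi}^*\ldot\alpha_m=\beta_m$, so $\alpha_m$ is an isomorphism for every $m$ iff $\wt{\phi}^*$ is an isomorphism for every $m$; by the standard long-exact-sequence argument (both the kernel and cokernel of a connecting map flanked by isomorphisms must vanish) this in turn is equivalent to
\[
\Hom_{\dZ}(\bL\underline{\mc}^*\dS,\wt{\mU}_{\phi}[m])=\Hom_{\dZ}(\underline{\mP},\wt{\mU}_{\phi}[m])=0 \quad\text{for all }m.
\]
This settles (1)$\Leftrightarrow$(2). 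I expect the main delicate point to be the identity $\wt{\phi}^*\ldot\alpha_m=\beta_m$, which has to be traced back to the definition of $\wt{\phi}$ as the natural transformation $\bL\mp_{\dY}^*\to \bL\wt{\mg}_{\phi}^*$ induced by the map $\phi\colon\mT\to\mU$ of bimodules.

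\smallskip

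\noindent Finally, (2)$\Leftrightarrow$(3) follows by adjunction. Since $\underline{\mc}=\mc\ldot\mp_{\dX}$, we have $\underline{\mP}\cong\bL\mp_{\dX}^*\mP$, and the adjunction $\bL\mp_{\dX}^*\dashv\bR\mp_{\dX*}$ yields
\[
\Hom_{\dZ}(\underline{\mP},\wt{\mU}_{\phi}[m]) \cong \Hom_{\dX}(\mP,\bR\mp_{\dX*}\wt{\mU}_{\phi}[m]).
\]
The recollement identity $\bR\mp_{\dX*}\cong \bL\mr_{\dX}^*$ together with the composition rule $\wt{\mg}_{\phi}\ldot\mr_{\dX}\cong\mg$ recorded in the text give $\bR\mp_{\dX*}\wt{\mU}_{\phi}\cong \bL\mr_{\dX}^*\bL\wt{\mg}_{\phi}^*\dS\cong \bL\mg^*\dS\cong\mU$. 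Hence the right-hand side equals $\Hom_{\dX}(\mP,\mU[m])$, establishing (2)$\Leftrightarrow$(3) and completing the proof.
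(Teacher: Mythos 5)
Your proposal is correct and follows essentially the same route as the paper's proof: the triangle obtained from $(\mT\mr2)$, the semi-orthogonality argument showing $\Hom_{\dZ}(\bL\mp_{\dY}^*\dS,\bL\underline{\mc}^*\dS[m])=0$, the naturality square relating the unit map to precomposition with $\wt{\phi}_{\dS}$, reduction to the classical generator $\dS$, and the adjunction $\bL\mp_{\dX}^*\dashv\bR\mp_{\dX*}\cong\bL\mr_{\dX}^*$ for (2)$\Leftrightarrow$(3). The only difference is cosmetic (you phrase the commutative diagram as the identity $\wt{\phi}^*\ldot\alpha_m=\beta_m$), so no further comment is needed.
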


\begin{proof}
The triangle $(\mT\mr2)$ of morphisms from $\dZ$ to $\dY$ induces an exact triangle
\begin{align}\label{triangle}
\bL\underline{\mc}^*\dS[-1]\lto \bL\mp_{\dY}^*\dS\stackrel{\wt{\phi}_{\dS}}{\lto} \bL\wt{\mg}^{*}_{\phi}\dS\lto \bL\underline{\mc}^*\dS
\end{align}
in the triangulated category $\prf(\dR\underset{\mT}{\with}\dS).$
Since $\wt{\mg}_{\phi}\ldot\mr_{\dX}=\mg$ and $\underline{\mc}=c\ldot\mp_{\dX}$ and taking into account that $\bL\mr_{\dX}^*\cong\bR\mp_{\dX*},$
we obtain the following sequence of isomorphisms
\begin{multline*}
\Hom_{\dZ}(\bL\underline{\mc}^*\dS,\; \bL\wt{\mg}^{*}_{\phi}\dS[m])
\cong
\Hom_{\dZ}(\bL\mp^*_{\dX}\bL\mc^*\dS,\; \bL\wt{\mg}^{*}_{\phi}\dS[m])
\cong\\
\cong\Hom_{\dX}(\bL\mc^*\dS,\; \bR\mp_{\dX*}\bL\wt{\mg}^{*}_{\phi}\dS[m])
\cong
\Hom_{\dX}(\bL\mc^*\dS,\; \bL\mg^{*}\dS[m]).
\end{multline*}
Thus,   condition (2) is equivalent to condition (3).

Consider now the map $\wt{\phi}:\mp_{\dY}\to \wt{\mg}_{\phi}$ between morphism from $\dZ$ to $\dY.$ It induces a natural transformation
$\bL^*\mp_{\dY}\to\bL^*\wt{\mg}_{\phi}$ beteen inverse image functors.
In particular, for any pair of objects $\mM,\mN\in\prf\dS$ and a morphism $u:\mM\to\mN$ there is a commutative diagram
\[
\begin{CD}
\bL^*\mp_{\dY}(\mM)@>\bL^*\mp_{\dY}(u)>>\bL^*\mp_{\dY}(\mN)\\
@V\wt{\phi}_{\mM}VV @VV\wt{\phi}_{\mN}V\\
\bL^*\wt{\mg}_{\phi}(\mM)@>\bL^*\wt{\mg}_{\phi}(u)>>\bL^*\wt{\mg}_{\phi}(\mN)
\end{CD}
\]
Putting $\mM=\dS$ and $\mN=\dS[m]$ we obtain  the following commutative diagram
\begin{align}\label{diagr}
\begin{CD}
\Hom_{\dY}(\dS,\; \dS[m])@>{{\bL\wt{\mg}^{*}_{\phi}}}>>
\Hom_{\dZ}(\bL\wt{\mg}^{*}_{\phi}\dS,\; \bL\wt{\mg}^{*}_{\phi}\dS[m])\\
@V{\bL\mp^{*}_{\dY}} V{\wr}V  @VV\mh^{\bullet}(\wt{\phi}_{\dS})V\\
\Hom_{\dZ}(\bL\mp_{\dY}^*\dS,\; \bL\mp_{\dY}^*\dS[m]) @>\sim>\mh_{\bullet}(\wt{\phi}_{\dS[m]})>
\Hom_{\dZ}(\bL\mp_{\dY}^*\dS,\; \bL\wt{\mg}^{*}_{\phi}\dS[m])
\end{CD}
\end{align}
The left vertical arrow is an isomorphism because the functor $\bL\mp^{*}_{\dY}$ is fully faithful.
The bottom horizontal arrow is also an isomorphism for all $m\in\ZZ.$ Indeed, let us apply the functor $\Hom_{\dZ}(\bL\mp_{\dY}^*\dS, -)$ to the exact triangle (\ref{triangle}).
Taking into account semi-orthogonal decomposition for perfect complexes on $\dZ,$ we obtain $\Hom_{\dZ}(\bL\mp_{\dY}^*\dS, \bL\underline{\mc}^*\dS)=0$ because $\bL\underline{\mc}^*\dS\cong \bL\mp_{\dX}^*\ldot\bL\mc^*\dS.$
This implies that the bottom arrow is an isomorphism.

(1)$\Leftrightarrow$ (2). Now, the top horizontal arrow in diagram (\ref{diagr}) is an isomorphism for all $m\in\ZZ$ if and only if the right vertical arrow is an isomorphism for all $m\in\ZZ.$
But it is equivalent to the property $\Hom_{\dZ}(\bL\underline{\mc}^*\dS, \bL\wt{\mg}^{*}_{\phi}\dS[m])=0$ for all $m\in\ZZ$ because the right vertical arrow
$\mh^{\bullet}(\wt{\phi}_{\dS})$ is a part of the long exact sequence obtained by applying the functor $\Hom_{\dZ}(-, \bL\wt{\mg}^{*}_{\phi}\dS)$ to exact triangle (\ref{triangle}). Thus, if the functor $\bL\wt{\mg}^{*}_{\phi}$ is fully faithful, then the top horizontal arrow is an isomorphism and this implies condition (2).

Conversely, if property (2) holds for all $m\in\ZZ,$ then the right vertical arrow in diagram (\ref{diagr}) is an isomorphism for all $m\in\ZZ.$
Hence, the top horizontal arrow is an isomorphism for all $m\in\ZZ.$ The object $\dS$ is a classical generator for the category $\prf\dS.$ Thus, by
Proposition \ref{Keller2}, the functor $\bL\wt{\mg}^{*}_{\phi}$ is fully faithful and $\wt{\mg}_{\phi}:\dZ=\dX\underset{\mf}{\oright}\dY\to\dY$ is an ff-morphism.
\end{proof}

Let $\dX=\prfdg\dR$ and $\dY=\prfdg\dS$ be derived noncommutative schemes and let $\phi: \mf\to\mg$ be a map between morphisms
from $\dX$ to $\dY.$ Suppose, as in Theorem \ref{main1_Krull}, the morphism
$\wt{\mg}_{\phi}:\dZ=\dX\underset{\mf}{\oright}\dY\to\dY$ is an ff-morphism. Assume also that the morphism
$\wt{\mg}_{\phi}$ is a pp-morphism and, hence, has a right section $\mr'_{\dY}: \dY\to\dZ.$
This implies that the embedding functor $\bL\wt{\mg}^{*}_{\phi}$ realizes the category $\prf\dS$ as a right admissible subcategory in
$\prf(\dR\underset{\mf}{\with}\dS).$ Thus, we obtain another semi-orthogonal decomposition of the form  $\prf(\dR\underset{\mf}{\with}\dS)=\langle \prf\dS^{\perp},\; \prf\dS\rangle.$
The natural enhancement of the subcategory $\prf\dS^{\perp}$ gives a derived noncommutative scheme $\dX'=\prfdg\dR'.$ By Proposition \ref{gluing_semi-orthogonal},
 the decomposition above says that the noncommutative scheme $\dZ$ can be also represented as a gluing $\dX'\underset{\mT'}{\oright}\dY,$
where $\mT'$ is a DG $\dY\hy\dX'$--bimodule. The morphism $\wt{\mg}_{\phi}$ is a new projection $\mp'_{\dY}.$
The composition $\mp'_{\dY}\ldot\mr'_{\dY}$ is the identity by adjointness, but the composition $\mp'_{\dY}\ldot\mr_{\dY}$ is also isomorphic to the identity
by construction. This implies that the adjoint composition $\overline{\mp}_{\dY}\ldot\mr'_{\dY}$ is isomorphic to the identity too.

In the case when $\dX$ and $\dY$ are regular and proper, all projections have right and left adjoint sections by Proposition \ref{admissible}.
The noncommutative schemes $\dZ$ and $\dX'$ are also regular and proper by Propositions \ref{prop_glue}, \ref{reg_glue}.

\begin{definition} Let $\dX=\prfdg\dR$ and $\dX'=\prfdg\dR'$ be two smooth and proper derived noncommutative schemes.
Suppose there are another smooth and proper noncommutative scheme $\dY=\prfdg\dS,$ morphisms $\mf:\dX\to\dY,$ $\mf':\dX'\to\dY,$ and isomorphisms $\mv, \mw$
between the gluings
\[
\xymatrix{
\dZ=\dX\underset{\mf}{\oright}\dY\ar@<0.8ex>[r]^{\mv}& \dZ'=\dX'\underset{\mf'}{\oright}\dY\ar@<0.5ex>[l]^{\mw}
}
\]
such that the morphisms $\mp'_{\dY}\ldot\mv\ldot\mr_{\dY}$ and $\mp_{\dY}\ldot\mw\ldot\mr'_{\dY}$ are isomorphisms of $\dY.$
In this case we call $\dX$ and $\dX'$ {\sf Krull--Schmidt partners.}
\end{definition}

Since the composition $\mp'_{\dY}\ldot\mv\ldot\mr_{\dY}$ is an isomorphism,  K-theories of $\dX$ and $\dX'$ are isomorphic. Moreover, their K-motives  are isomorphic too.

Let $X$ be a smooth projective scheme and let $\dR$ be a DG algebra such that $\prfdg X$ is quasi-equivalent to $\prfdg\dR.$ Let $\mP_s\in \prfdg X,\; s=1,2$ be two perfect complexes
such that their supports $\supp\mP_1, \supp\mP_2\subset X$ do not meet.
Put $\mT=\mP_1\oplus\mP_2$ and consider the gluing $\dZ=X\underset{\mT}{\oright}\pt.$ Let us take $\mU=\mP_2$ and $\mP=\mP_1.$
Since  condition (3) of  Theorem \ref{main1_Krull} holds for $\mU=\mP_2$ and $\mP=\mP_1,$ the morphism $\dZ\to\pt,$ which is given by the object $\wt{\mU},$ is an ff-morphism.
Therefore, we obtain another semi-orthogonal decomposition
$\langle \prf\dR', \prf\kk\rangle$ for the category $\prf(\dR\underset{\mT}{\with}\kk).$
Thus, we get a Krull--Schmidt partner $\dX':=\prfdg\dR'$ for the usual commutative scheme $X.$
In general this Krull--Schmidt partner is not isomorphic to $X.$

For example, let $X$ be a smooth projective curve of genus $g,$ and let $\mP_1, \mP_2$ be torsion coherent sheaves of lengths
$l_s=\length P_s,\; s=1,2$  such that
$\supp\mP_1\cap\supp\mP_2=\emptyset.$
It can be easily checked that the Krull--Schmidt partner $\dX'$ is not isomorphic to $X.$
Indeed, the integral bilinear form $\chi(E, F)=\sum_m (-1)^m \dim\Hom(E, F[m])$ on $K_0(X)$ goes through $\ZZ^2=H^{ev}(X, \ZZ).$
In this case the forms $\chi$ for $X$ and for $\dX'$ are respectively equal to
\begin{align}
\chi_{X}=\begin{pmatrix}
1-g & 1\\
-1 & 0
\end{pmatrix}\qquad\text{and}\qquad
\chi_{\dX'}=\chi_t:=\begin{pmatrix}
t & 1\\
-1 & 0
\end{pmatrix}, \qquad\text{where}\quad t=1-g -l_1 l_2.
\end{align}
The integral bilinear forms $\chi_t$ are not equivalent for different $t.$
  Hence, the categories $\prf X$ and $\prf\dR'$ are not equivalent.
Moreover, the categories $\prf\dR'$ for different $t$ are not equivalent to each other.
Thus, for any smooth projective curve $X$ we obtain infinitely many different Krull--Schmidt partners.
For the same $t$ we have many Krull-Schmidt partners that depend on the torsion sheaves $\mP_1$ and $\mP_2.$
It is reasonable to expect that these Krull-Schmidt partners have nontrivial moduli spaces.
The case of $X=\PP^1$ and two points $\mP_s=p_s,\; s=1,2$ is discussed in \cite[3.1]{O_q}.

\section{Finite dimensional algebras, quasi-hereditary algebras, and gluing}\label{quasi_her_alg}

\subsection{Finite dimensional algebras}
In this section we consider derived noncommutative schemes which are related to finite dimensional algebras.

Let $\kk$ be a field and let $\Ga$ be a finite dimensional $\kk$\!--algebra with Jacobson radical $\rad.$
The quotient algebra $\overline{\Ga}=\Ga/\rad$ is semisimple. We will assume that the algebra $\Ga$ is basic. This means that the algebra $\overline{\Ga}$ is isomorphic to $\underbrace{\kk\times\cdots\times \kk}_{n},$ where $\kk$ is the base field.
Denote by $\{e_1,\ldots, e_n\}$ a complete sequence of primitive orthogonal idempotents of $\Ga,$
so that $\sum_{i=1}^ne_i=1.$
Let $\Pi_i=e_i\Ga$ with $1\le i\le n$ be the corresponding indecomposable projective right $\Ga$\!--modules, and let $\Si_i=e_i\Ga/e_i\rad$  be the simple
right $\Ga$\!--modules. The quotient algebra $\overline{\Ga}=\Ga/\rad$ is isomorphic to
$\bigoplus_{i=1}^n \Si_i$ as $\Ga$\!--module. Since $\Ga$ is basic, all $\Si_i$ are one dimensional as $\kk$\!--vector spaces.

Denote by $\Md\Ga$
the category of right $\Ga$\!--modules. The full subcategory of finitely generated
right $\Ga$\!--modules will be denoted by $\md\Ga.$
Any algebra $\Ga$ can be considered as a DG algebra. The derived category $\D(\Ga)$ of all DG modules over this DG algebra
is nothing but the unbounded derived category $\D(\Md\Ga)$ and the DG category $\SF\Ga$ of semi-free DG modules is an enhancement of
this triangulated category. The triangulated category $\prf\Ga$ of perfect DG modules (from now on, perfect complexes)
consists of all bounded complexes of finitely generated projective modules. The DG category $\prfdg\Ga$ is a natural enhancement of
$\prf\Ga,$ and it defines a derived noncommutative scheme $\dW=\prfdg\Ga$ (see Definition \ref{noncommutative_scheme}).
Since the algebra $\Ga$ is finite dimensional, the noncommutative scheme $\dW$ is proper.

We can also consider the bounded derived category $\D^{b}(\md\Ga)$ of finitely generated $\Ga$\!--modules. It contains the triangulated category $\prf\Ga$
as a full triangulated subcategory.
Moreover, the subcategory of perfect complexes $\prf\Ga\subseteq \D^{b}(\md\Ga)$ is equivalent to the whole bounded derived category $\D^{b}(\md\Ga)$ if and only if the algebra
$\Ga$ is of finite global dimension. In this case the derived noncommutative scheme $\dW=\prfdg\Ga$ is regular (see \cite{Ro}).

\subsection{Quasi-hereditary algebras}

Let $N_1, \ldots , N_k$ be finitely generated right $\Ga$\!--modules. We denote by $\Fl(N_1,\ldots , N_k)$
the full subcategory of the abelian category $\md\Ga$ consisting of all modules $M$ that admit a finite filtration
$0 = M_0\subseteq M_1\subseteq\cdots\subseteq M_s = M$ such that each factor $M_p/M_{p-1}$ is isomorphic to an object of
the form $N_j.$

Most of our constructions will depend on a linear order of the sequence of
idempotents. This ordering will be denoted by $\be = (e_1, e_2, \ldots , e_n).$
The idempotents $e_i+e_{i+1}+\cdots+e_n$ for $1\le i\le n$
will be denoted by $\varepsilon_i$ and we put $\varepsilon_{n+1} = 0.$

\begin{definition}
For each $1\le i \le n$ define the {\sf standard module} $\Delta_i$ as the largest
quotient of the projective module $\Pi_i$ having no simple composition factors $\Si_j$ with
$j > i.$
\end{definition}
In other words, we have the following definition of the standard modules $\Delta_i=e_i\Ga/e_i\Ga\varepsilon_{i+1}\Ga$
and this definition depends on the ordering $\be.$

Let us denote by $\Theta_i$ the kernels of the natural epimorphisms $\Pi_i\twoheadrightarrow\Delta_i,$ and by  $\Xi_i$
the kernels of the natural surjections
$\Delta_i\twoheadrightarrow \Si_i.$ Thus, for all $1\le i\le n$
there are short exact sequences
\begin{align}\label{first_seq}
0\lto \Theta_i\lto \Pi_i\lto \Delta_i\lto 0\\
\label{second_seq} 0\lto \Xi_i\lto \Delta_i\lto \Si_i\lto 0
\end{align}

\begin{definition}\cite{CPS}\label{quasi-her}
The algebra $\Ga$ is called {\sf quasi-hereditary} (with respect to the ordering $\be$) if the following conditions hold:
\begin{itemize}
\item[1)] the modules $\Xi_i$ belong to $\Fl(\Si_1,\ldots, \Si_{i-1})$ for all $1\le i\le n;$
\item[2)] the modules $\Theta_i$ belong to $\Fl(\Delta_{i+1},\ldots,\Delta_n)$ for all $1\le i\le n.$
\end{itemize}
\end{definition}

Condition 1) of Definition \ref{quasi-her} means that all the modules $\Delta_i$ are Schurian, i.e.  $\End_{\Ga}(\Delta_i)$ are division rings for all $1\le i\le n.$
In our case, when $\Ga$ is a basic algebra, condition 1) implies that $\End_{\Ga}(\Delta_i)\cong\kk$
for all $i.$ In particular,
there is an isomorphism $\Delta_1\cong \Si_1,$ while, by definition, $\Delta_n\cong \Pi_n.$
Property 2) also implies that for every $1\le i\le n$ the projective module $\Pi_i$ belongs to the subcategory $\Fl(\Delta_i,\ldots,\Delta_n).$
The following proposition is well-known.
\begin{proposition}
Let $\Ga$ be a basic quasi-hereditary algebra. Then $\Ga$ has finite global dimension and the sequence of the standard modules $(\Delta_1,\ldots, \Delta_n)$ forms a full exceptional collection in the triangulated category $\prf\Ga.$
\end{proposition}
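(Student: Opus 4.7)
The plan is to prove four things in order: finite global dimension of $\Ga$; the key vanishing $\Ext^m_{\Ga}(\Delta_i,\Si_k)=0$ for all $m\ge 0$ and $k<i$; semi-orthogonality and exceptionality of the $\Delta_i$; and classical generation.

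\textbf{Finite global dimension.} By downward induction on $i$ I would first show $\operatorname{pd}_{\Ga}(\Delta_i)<\infty$. The base case is $\Delta_n\cong\Pi_n$, which is projective. For the step, condition 2) of Definition~\ref{quasi-her} says $\Theta_i\in\Fl(\Delta_{i+1},\ldots,\Delta_n)$, which together with the inductive hypothesis bounds $\operatorname{pd}(\Theta_i)$; then (\ref{first_seq}) gives $\operatorname{pd}(\Delta_i)\le 1+\operatorname{pd}(\Theta_i)$. An upward induction then yields $\operatorname{pd}(\Si_i)<\infty$: for $i=1$ we have $\Xi_1=0$, so $\Delta_1\cong\Si_1$, while the step uses (\ref{second_seq}) together with condition 1) and the finite projective dimensions already known for $\Delta_i$ and for the smaller simples.

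\textbf{Key Ext-vanishing.} The core technical step is the claim $\Ext^m_{\Ga}(\Delta_i,\Si_k)=0$ for all $m\ge 0$ and all $k<i$, proved by downward induction on $i$. For $i=n$ this is immediate: $\Delta_n=\Pi_n$ is projective, and $\Hom(\Pi_n,\Si_k)=e_n\Si_k=0$ for $k<n$. For the step, applying $\Hom_{\Ga}(-,\Si_k)$ to (\ref{first_seq}) and using projectivity of $\Pi_i$ together with $\Hom(\Pi_i,\Si_k)=e_i\Si_k=0$ (as $k<i$) yields $\Hom(\Delta_i,\Si_k)=0$ and isomorphisms $\Ext^m(\Delta_i,\Si_k)\cong\Ext^{m-1}(\Theta_i,\Si_k)$ for $m\ge 1$. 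Now $\Theta_i\in\Fl(\Delta_{i+1},\ldots,\Delta_n)$ by condition 2); for every subquotient $\Delta_p$ appearing we have $p>i>k$, so the inductive hypothesis gives $\Ext^*(\Delta_p,\Si_k)=0$, and a long exact sequence in the first argument forces $\Ext^*(\Theta_i,\Si_k)=0$, which completes the induction.

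\textbf{Exceptionality and semi-orthogonality.} Iterating (\ref{second_seq}) and using condition 1), each $\Delta_j$ lies in $\Fl(\Si_1,\ldots,\Si_j)$. For $i>j$ every such subquotient $\Si_k$ satisfies $k\le j<i$, so the key vanishing and long exact sequences in the second argument give $\Ext^*_{\Ga}(\Delta_i,\Delta_j)=0$. For the exceptionality of $\Delta_i$, the same step as above shows $\Ext^m(\Delta_i,\Si_i)=0$ for $m\ge 1$ (because any subquotient $\Delta_p$ of $\Theta_i$ has $p>i$, so $\Ext^*(\Delta_p,\Si_i)=0$ by the key vanishing), while $\Hom(\Delta_i,\Si_i)\cong\kk$ since $\operatorname{top}(\Delta_i)=\Si_i$. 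Combined with $\Ext^*(\Delta_i,\Xi_i)=0$ (from the filtration of $\Xi_i$ by $\Si_k$, $k<i$, and the key vanishing), the long exact sequence from (\ref{second_seq}) yields $\End_{\Ga}(\Delta_i)\cong\kk$ and $\Ext^m_{\Ga}(\Delta_i,\Delta_i)=0$ for $m>0$.

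\textbf{Fullness.} Sequence (\ref{first_seq}) together with condition 2) shows by downward induction that $\Pi_i\in\Fl(\Delta_i,\Delta_{i+1},\ldots,\Delta_n)$, so each indecomposable projective lies in the triangulated subcategory of $\prf\Ga$ classically generated by $\{\Delta_1,\ldots,\Delta_n\}$. Since $\Ga\cong\bigoplus_i\Pi_i$ classically generates $\prf\Ga$, the collection is full. The one nontrivial ingredient is the downward induction in the second paragraph; all remaining steps are formal manipulations of the long exact sequences attached to (\ref{first_seq}) and (\ref{second_seq}).
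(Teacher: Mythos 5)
Your proof is correct and follows essentially the same route as the paper: the same descending/ascending inductions on the sequences (\ref{first_seq}) and (\ref{second_seq}) for finite global dimension, the same use of $\Ext^*(\Pi_j,\Si_k)=0$ for $j\ne k$ plus condition 2) for the orthogonality induction, and the same fullness argument via $\Pi_i\in\Fl(\Delta_i,\ldots,\Delta_n)$. The only (immaterial) difference is that you phrase the key induction as vanishing of $\Ext^*(\Delta_i,\Si_k)$ for $k<i$ and then assemble $\Ext^*(\Delta_j,\Delta_i)$ from the filtration of $\Delta_i$ by simples, whereas the paper proves $\Ext^*(\Delta_j,\Delta_i)=0$ for $j>i$ directly from $\Ext^*(\Pi_j,\Delta_i)=0$.
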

\begin{proof}
A descending induction and short exact sequences (\ref{first_seq}) give us that all $\Delta_i$ belong to $\prf\Ga,$ but an ascending
induction and short exact sequences (\ref{second_seq}) show us that all simple modules $\Si_i$ also belong to  $\prf\Ga.$
This implies that $\Ga$ has finite global dimension and $\prf\Ga\cong \D^b(\md\Ga).$

Since $\Delta_i$ belongs to $\Fl(\Si_1,\ldots, \Si_{i}),$ we have $\Hom(\Pi_j, \Delta_i)=0$ when $j>i.$
Applying this to $\Delta_n=\Pi_n$ we obtain that $\Ext^k(\Delta_n, \Delta_i)=0$ for all $k\ge 0$ and $i<n.$
Now by descending induction by $j$ we can show that $\Ext^k(\Delta_j, \Delta_i)=0$ for all $k\ge 0$ and $i<j.$
Indeed, we know that $\Ext^k(\Pi_j, \Delta_i)=0$ for all $k\ge 0$ and $i<j.$ Moreover, by induction hypothesis
$\Ext^k(\Theta_j, \Delta_i)=0$ for all $k\ge 0$ and $i<j,$ because $\Theta_i$ belongs to $\Fl(\Delta_{i+1},\ldots,\Delta_n).$
Now the short exact sequence (\ref{first_seq}) for $\Delta_j$ implies vanishing of all Ext's from $\Delta_j$ to $\Delta_i$
when $j>i.$

Since $\Theta_i$ belongs to $\Fl(\Delta_{i+1},\ldots,\Delta_n),$ we have $\Ext^k(\Theta_i, \Delta_i)=0$ for all $k\ge 0.$ Hence, for any $i$ we obtain
$
\Ext^k(\Delta_i, \Delta_i)\cong \Ext^k(\Pi_i, \Delta_i)\cong \Ext^k(\Pi_i, \Si_i).
$
This implies that $\Ext^k(\Delta_i, \Delta_i)=0,$ when $k>0$ and $\End(\Delta_i)\cong\kk.$ Thus,
the sequence of the standard modules $(\Delta_1,\ldots, \Delta_n)$ forms a full exceptional collection in the triangulated category $\prf\Ga\cong\D^b(\md\Ga).$
\end{proof}

The Proposition above implies that the derived noncommutative scheme $\dW=\prfdg\Ga$ is smooth and proper and can be obtained as a gluing of $n$ copies of the point $\pt.$
For any $1\le i\le n$ we denote by $\T_i\subseteq\prf\Ga$  the full admissible subcategory
 $\T_i=\langle \Delta_1,\ldots, \Delta_i\rangle$
 that is generated by the exceptional subcollection
$(\Delta_1,\ldots,\Delta_i).$
Let $\U_{i+1}={}^{\perp}\T_i$ be the left orthogonal, then it is generated by the exceptional subcollection $(\Delta_{i+1},\ldots,\Delta_n),$
i.e.  $\U_{i+1}=\langle\Delta_{i+1},\ldots,\Delta_n\rangle$ and it is also admissible. The following Lemma is evident.

\begin{lemma}\label{lemma_quasi} For every $1\le i\le n$ the subcategories $\T_i,\; \U_i\subseteq\prf\Ga$ satisfy the following properties:
\begin{itemize}
\item[1)] $\T_i$ contains the simple modules
$\{\Si_1,\ldots, \Si_i\}$ and is generated by this set of objects;
\item[2)] $\U_i$ contains the projective modules
$\{\Pi_{i},\ldots, \Pi_n\}$ and is generated by this set of objects.
\end{itemize}
\end{lemma}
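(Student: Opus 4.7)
The plan is to derive both statements directly from the two structural short exact sequences (\ref{first_seq}) and (\ref{second_seq}) together with the filtration conditions in Definition \ref{quasi-her}. Since $\T_i=\langle\Delta_1,\dots,\Delta_i\rangle$ and $\U_i=\langle\Delta_i,\dots,\Delta_n\rangle$ are full triangulated subcategories closed under direct summands, membership of an object follows from building it via cones out of the relevant $\Delta_j$'s, and conversely generation by a new set $S$ follows from expressing each $\Delta_j$ as an iterated cone built from $S$.

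For part 1), I would show by induction on $j\le i$ that $\Si_j\in\T_i$ and simultaneously that $\Delta_j$ lies in the triangulated subcategory $\langle\Si_1,\dots,\Si_j\rangle$. The base case is immediate since $\Delta_1\cong\Si_1$. For the induction step, I use the short exact sequence (\ref{second_seq}) $0\to\Xi_j\to\Delta_j\to\Si_j\to 0$. By condition 1) of Definition \ref{quasi-her}, $\Xi_j\in\Fl(\Si_1,\dots,\Si_{j-1})$, so by the inductive hypothesis $\Xi_j$ lies in $\langle\Si_1,\dots,\Si_{j-1}\rangle\subseteq\T_{j-1}\subseteq\T_i$. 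Since $\Delta_j\in\T_i$ by definition, the cone yields $\Si_j\in\T_i$; conversely, since $\Xi_j\in\langle\Si_1,\dots,\Si_{j-1}\rangle$ and $\Si_j\in\langle\Si_1,\dots,\Si_j\rangle$, the same triangle places $\Delta_j$ into $\langle\Si_1,\dots,\Si_j\rangle$. Taking $j=1,\dots,i$ establishes both inclusions $\T_i\supseteq\{\Si_1,\dots,\Si_i\}$ and $\T_i=\langle\Si_1,\dots,\Si_i\rangle$.

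For part 2), the argument is dual and proceeds by descending induction on $j\ge i$, proving that $\Pi_j\in\U_i$ and $\Delta_j\in\langle\Pi_j,\dots,\Pi_n\rangle$. The base case $j=n$ uses $\Delta_n\cong\Pi_n$. For the inductive step I apply (\ref{first_seq}) $0\to\Theta_j\to\Pi_j\to\Delta_j\to 0$. By condition 2) of Definition \ref{quasi-her}, $\Theta_j\in\Fl(\Delta_{j+1},\dots,\Delta_n)$, and all these $\Delta_k$ lie in $\U_i$ for $k\ge i+1$, so $\Theta_j\in\U_i$; combined with $\Delta_j\in\U_i$, the triangle gives $\Pi_j\in\U_i$. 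Conversely, by the inductive hypothesis $\Theta_j\in\langle\Pi_{j+1},\dots,\Pi_n\rangle$, and the triangle exhibits $\Delta_j$ as a cone of a morphism whose source and target both lie in $\langle\Pi_j,\Pi_{j+1},\dots,\Pi_n\rangle$, establishing the second inclusion.

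Since the two short exact sequences and the filtration conditions of Definition \ref{quasi-her} do all the work, there is no real obstacle here; the only point requiring a bit of care is bookkeeping to ensure that the induction index $j$ is threaded consistently with the range required by $\T_i$ (ascending) versus $\U_i$ (descending), and that at each step the object appearing in the filtration $\Fl(\cdots)$ is indeed available in the triangulated subcategory currently under construction.
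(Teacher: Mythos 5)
Your argument is correct, and it is precisely the ascending/descending induction on the exact sequences (\ref{first_seq}) and (\ref{second_seq}) that the paper has in mind when it declares the lemma ``evident'' (the same inductions appear in the proof of the preceding proposition). No gaps: the only point worth double-checking, which you handle correctly, is that a filtration in $\Fl(\cdots)$ translates into iterated extensions, hence membership in the triangulated subcategory generated by the successive quotients.
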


Let $\dT_i$ and $\dU_i$ be full DG subcategories of the DG category of perfect complexes $\prfdg\Ga$
with the same objects as $\T_i$ and $\U_i,$ respectively, i.e. $\dT_i$ and $\dU_i$ are the induced DG enhancements for $\T_i$ and $\U_i.$

There are two recursive constructions of quasi-hereditary algebras described
in the literature. We will use the construction based on extensions of centralizers, described in  \cite{DR1}.
Actually, when we have an algebra $\Ga,$ we can consider a sequence of algebras $\Ga_k\cong\varepsilon_k\Ga\varepsilon_k,$
where $1\le k\le n.$ We obtain that $\Ga_n$ is isomorphic to the field $\kk,$ while the algebra $\Ga_1$ is isomorphic to $\Ga.$
The algebras $\Ga_k$ are the algebras of endomorphisms $\Ga_k\cong\End_{\Ga}(\bigoplus_{i=k}^{n}\Pi_i).$

\begin{proposition}\label{decomp_hereditary}
Let $(\Ga, \be)$ be a basic quasi-hereditary algebra with indecomposable projective modules
$\Pi_1,\ldots,\Pi_n.$ Let $\Ga_k$ be the algebra of endomorphisms $\End_{\Ga}(\bigoplus_{i=k}^{n}\Pi_i).$ Then the following properties hold.
\begin{itemize}
\item[1)] For any $1\le k\le n$ the algebra
$\Ga_k$ is basic and quasi-hereditary.

\item[2)] For any $1\le k\le n$ the DG category $\prfdg\Ga_k$ is quasi-equivalent to the DG subcategory
$\dU_k\subseteq\prfdg\Ga.$
Moreover, under this quasi-equivalence the indecomposable projective $\Ga_k$\!--modules go to indecomposable projective $\Ga$\!--modules, and standard $\Ga_k$\!--modules
go to standard $\Ga$\!--modules.

\item[3)] For any $1\le k < n$ the DG category of perfect complexes $\prfdg\Ga_k$ is quasi-equivalent to the gluing $\pt\underset{\;\mT_k}{\oright}\prfdg\Ga_{k+1}$
via the left DG $\Ga_{k+1}$\!--module $\mT_k=\dHom_{\prfdg\Ga}(\Delta_k, \bigoplus_{i=k+1}^{n}\Pi_i).$
\end{itemize}
\end{proposition}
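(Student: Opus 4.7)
The plan is to treat parts (2), (1), (3) in that order, working on the categorical side first and then transporting back to algebras.

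For part (2), I observe that $R_k := \bigoplus_{i=k}^n \Pi_i$ is a classical generator of $\U_k$ by Lemma \ref{lemma_quasi}, and that its DG endomorphism algebra in $\prfdg \Ga$ is concentrated in degree zero (since the $\Pi_i$ are projective) and equals $\End_\Ga(R_k) = \varepsilon_k \Ga \varepsilon_k = \Ga_k$. The full DG subcategory of $\dU_k$ on the single object $R_k$ is thus quasi-equivalent to $\Ga_k$ viewed as a one-object DG category, and Proposition \ref{Keller2} (applied with $\dB = \dU_k$, which is pretriangulated and idempotent complete as an admissible subcategory of $\prfdg \Ga$) yields a quasi-equivalence
\[
\prfdg \Ga_k \;\stackrel{\sim}{\lto}\; \dU_k
\]
sending the free module $e_i \Ga_k$ to $\Pi_i$ for each $i \ge k$. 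To identify the image of the standard $\Ga_k$-modules I would compute that the inverse quasi-equivalence sends $\Delta_i$ to $\Delta_i \varepsilon_k$, and then verify the identity
\[
\Delta_i \varepsilon_k \;=\; e_i \Ga \varepsilon_k \,/\, e_i \Ga \varepsilon_{i+1} \Ga \varepsilon_k \;=\; e_i \Ga_k \,/\, e_i \Ga_k \varepsilon_{i+1} \Ga_k,
\]
which uses $\varepsilon_k \varepsilon_{i+1} = \varepsilon_{i+1}$ for $i \ge k$; this identifies $\Delta_i \varepsilon_k$ with the standard $\Ga_k$-module for the ordering $(e_k,\ldots,e_n)$.

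For part (1), the algebra $\Ga_k$ is basic because $e_k,\ldots,e_n$ form a complete set of primitive orthogonal idempotents in $\Ga_k$ summing to $1_{\Ga_k}$, each with one-dimensional simple quotient inherited from that of $\Ga$. To check the two conditions of Definition \ref{quasi-her} with respect to this induced ordering, I would apply the exact functor $(-)\varepsilon_k : \md \Ga \to \md \Ga_k$ to the filtrations which already exist in $\md \Ga$. Using the identification $\Delta^{(k)}_i \cong \Delta_i \varepsilon_k$ from part (2), the filtration of $\Pi_i$ by $\Delta_j$ with $j \ge i$ becomes the required filtration of $e_i \Ga_k$; likewise, the filtration of $\Xi_i$ by $\Si_j$ with $j < i$ becomes a filtration of $\Xi^{(k)}_i = \Xi_i \varepsilon_k$ by the surviving terms $\Si_j \varepsilon_k \cong \Si_j$ with $k \le j < i$, the factors $\Si_j$ with $j < k$ dropping out since $\Si_j \varepsilon_k = 0$.

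For part (3), the exceptional collection $(\Delta_k, \Delta_{k+1}, \ldots, \Delta_n)$ generating $\U_k$ yields the semi-orthogonal decomposition
\[
\U_k \;=\; \langle \Delta_k, \, \U_{k+1} \rangle.
\]
Applying Proposition \ref{gluing_semi-orthogonal} to $\dU_k$ with this decomposition realises $\dU_k$ as a gluing of the DG enhancements of $\langle \Delta_k\rangle$ and $\U_{k+1}$, with bimodule $\mT(\mY, \Delta_k) = \dHom_{\prfdg \Ga}(\Delta_k, \mY)$ for $\mY \in \dU_{k+1}$. Since $\Delta_k$ is exceptional, the enhancement of $\langle \Delta_k\rangle$ is quasi-equivalent to $\pt = \prfdg \kk$, and part (2), applied at levels $k$ and $k+1$, identifies $\dU_k$ with $\prfdg \Ga_k$ and $\dU_{k+1}$ with $\prfdg \Ga_{k+1}$. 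The gluing bimodule, a DG $\Ga_{k+1}$-$\kk$-bimodule, is determined by its value on the classical generator $\bigoplus_{i=k+1}^n \Pi_i$ of $\dU_{k+1}$, and this value is precisely the left $\Ga_{k+1}$-module $\mT_k = \dHom_{\prfdg \Ga}(\Delta_k, \bigoplus_{i=k+1}^n \Pi_i)$ of the statement. The main technical hinge throughout is the identification $\Delta^{(k)}_i \cong \Delta_i \varepsilon_k$; once it is in place, parts (1) and (2) reduce to formal applications of Proposition \ref{Keller2} and exactness of $(-)\varepsilon_k$, and part (3) to a direct application of Proposition \ref{gluing_semi-orthogonal}.
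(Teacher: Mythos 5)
Your proposal is correct and follows essentially the same route as the paper: both identify $\dU_k$ with $\prfdg\Ga_k$ via the generator $\bigoplus_{i=k}^n\Pi_i$ (you invoke Proposition \ref{Keller2} on the extension functor, the paper invokes Proposition \ref{Kell1} on the restriction functor $\dHom_\Ga(\bigoplus_{i=k}^n\Pi_i,-)$, which on modules is your $(-)\varepsilon_k$), both transport the exact sequences (\ref{first_seq})--(\ref{second_seq}) to verify quasi-heredity of $\Ga_k$, and both deduce part 3) from Proposition \ref{gluing_semi-orthogonal} applied to $\U_k=\langle\Delta_k,\U_{k+1}\rangle$. Your explicit computation $\Delta_i\varepsilon_k=e_i\Ga_k/e_i\Ga_k\varepsilon_{i+1}\Ga_k$ is a welcome detail that the paper only asserts.
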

\begin{proof}
By Lemma \ref{lemma_quasi} the projective module $\bigoplus_{i=k}^{n}\Pi_i$ belongs to $\U_k$ and generates it. Moreover, the DG algebra of endomorphisms of this object in the DG category
$\dU_k$ is quasi-isomorphic to the algebra $\Ga_k.$ Hence, by Proposition \ref{Kell1}, there is a quasi-equivalence between $\dU_k$ and $\prfdg\Ga_k.$
This quasi-equivalence is actually given by the DG functor $\dHom_{\Ga}(\bigoplus_{i=k}^{n}\Pi_i, -).$ Under this DG functor the projective modules $\Pi_j, j\ge k$ go to
the projective $\Ga_k$\!--module $\dHom_{\Ga}(\bigoplus_{i=k}^{n}\Pi_i, \Pi_j).$
Note that the projective modules $\Pi_j, j\ge k$ belong to $\dU_k.$

 The simple modules $\Sigma_j, j\ge k$ go to the simple $\Ga_k$\!--modules,
while the simple modules $\Sigma_j, j<k$ go to $0.$ However, the simple modules $\Sigma_j, j\ge k$ do not necessarily belong to $\dU_k$ and, hence, the simple
$\Ga_k$\!--modules do not go to the simple $\Ga$\!--modules.
Besides, the standard modules $\Delta_j=e_j\Ga/e_j\Ga\varepsilon_{j+1}\Ga$ also go to the standard $\Ga_k$\!--modules, when $j\ge k,$ and go to $0,$
when $j<k.$ Moreover, the standard modules $\Delta_j, j\ge k$ belong to $\dU_k$ and, therefore, the standard $\Ga_k$\!--modules correspond to standard $\Ga$\!--modules $\Delta_j, j\ge k.$
We also have that exact sequences  (\ref{first_seq}) and (\ref{second_seq}) for $i\ge k$ go to the same exact sequence in $\md\Ga_k$ and  conditions 1) and 2) of Definition \ref{quasi-her}
hold. Thus, the algebra $\Ga_k$ is also  quasi-hereditary.
By Proposition \ref{gluing_semi-orthogonal}, the semi-orthogonal decomposition $\U_k=\langle\Delta_k, \U_{k+1}\rangle$ implies property 3).
\end{proof}

Any path algebra of a directed quiver with relations is quasi-hereditary in two different ways.
 First, we can take all the simple
modules as the standard modules. In this case the category $\Fl(\Delta_1,\ldots,\Delta_n)$ coincides
 with the whole abelian category $\md\Ga.$
 The second way is to take the
indecomposable projective modules as the standard modules. In this case the
subcategory $\Fl(\Delta_1,\ldots,\Delta_n)$ contains only
the finitely generated projective modules.

\subsection{Well-formed quasi-hereditary algebras}

Let $\Ga$ be a basic quasi-hereditary algebra and let $(\Delta_1,\ldots, \Delta_n)$ be the complete sequence of the standard $\Ga$\!--modules that forms a full exceptional collection
in the triangulated category of perfect complexes $\prf\Ga.$

\begin{definition}\label{wf}   An algebra $\Ga$ is {\sf well-formed}
if for every $1\le i\le n$ there exists a right $\Ga$\!--module $\Psi_i\in\Fl(\Delta_{i+1},\ldots, \Delta_n)$ and a morphism
$\pi_i: \Pi_i\to \Psi_i$ such that the canonical morphisms of the functors $\Hom(\Psi_i,-)\to \Hom(\Pi_i,-)$
  is an isomorphism on the subcategory $\U_{i+1}\subset\prf\Ga.$
\end{definition}
\begin{remark}
{\rm
In other words this property means that the complex  $\Pi_i\stackrel{\pi_i}{\to} \Psi_i$ belongs to the left orthogonal ${}^{\perp}\U_{i+1}$
in the category $\U_{i},$ and the corresponding projection of $\Pi_i$ on $\U_{i+1}$  is a module from the subcategory $\Fl(\Delta_{i+1},\ldots, \Delta_n).$
Note that the right orthogonal $\U_{i+1}^{\perp}$ in $\U_i$ is exactly the subcategory generated by the exceptional object $\Delta_i.$
}
\end{remark}

Note that $\Psi_n=0.$
Moreover, for $n-1$ the module $\Psi_{n-1}$ also exists for any quasi-hereditary algebra $\Ga.$
It is isomorphic to $\Pi_n^{m},$
where $m$ is the dimension of the space of homomorphisms $\Hom_{\Ga}(\Pi_{n-1}, \Pi_n).$
However, already for $i=n-2$ the existence of the module $\Psi_{n-2}\in\Fl(\Delta_{n-1}, \Delta_n)$ with the property given in Definition \ref{wf}
is an additional restrictive condition.

Recall that any exceptional collection in a proper triangulated category has the right and left dual exceptional collections (see, e.g., \cite{Bo}).
First, let us consider and  describe the left dual for the collection
$(\Delta_1,\ldots, \Delta_n).$
Denote by $\Iota_i$ the injective envelope of the simple module $\Si_i$ for all $1\le i\le n$ and define the costandard
modules $\nabla_i$ as the maximal submodules of $\Iota_i$ having no composition factors $\Si_j$ with $j>i.$
It is evident that $\nabla_1\cong\Delta_1\cong\Si_1.$ Moreover, it is not difficult to check that the collection $(\nabla_n,\ldots,\nabla_1)$ is a full exceptional collection in the triangulated category $\prf\Ga$ that is left dual to the collection
$(\Delta_1,\ldots, \Delta_n).$ The last property means that the following conditions hold
\[
\Ext^{l}(\Delta_i, \nabla_j)\cong
\begin{cases}
\kk, & \text{when}\quad i=j,\quad \text{and}\quad l=0,\\
0, & \text{otherwise.}
\end{cases}
\]
In this case we get that for every $1\le i\le n$ the admissible subcategory $\T_i=\langle\Delta_1,\ldots,\Delta_i\rangle$ coincides with the subcategory $\langle\nabla_i, \ldots, \nabla_1\rangle$ and the object $\nabla_{i+1}$ generates the right orthogonal
to the subcategory $\T_i$ in the category $\T_{i+1}.$

Now we consider a full exceptional collection $(\K_n,\ldots,\K_1)$ of objects in
$\prf\Ga$ that is right dual to the collection $(\Delta_1,\ldots, \Delta_n).$ Thus, we have
\[
\Ext^{l}(\K_i, \Delta_j)\cong
\begin{cases}
\kk, & \text{when}\quad i=j,\quad \text{and}\quad l=0,\\
0, & \text{otherwise.}
\end{cases}
\]
In particular, there are isomorphisms $\K_n\cong\Delta_n\cong\Pi_n.$
It directly follows from definition that the objects $\K_i$ are isomorphic to the complexes
$\{\Pi_i\stackrel{\pi_i}{\lto} \Psi_i\}.$ Hence, the property for a quasi-hereditary algebra to be  well-formed
can be also considered as a property of the right dual exceptional collection $(\K_n,\ldots,\K_1).$
Denote by $\psi_i: \Theta_i\to \Psi_i$ the composition of $\pi_i$ and the natural inclusion of $\Theta_i$ to $\Pi_i,$ and let
$\Upsilon_i$ be the cone of $\psi_i.$
For any $1\le i\le n$ we have the following commutative diagram  of exact triangles in the triangulated category $\prf\Ga.$
\begin{align}\label{diagr_wf}
\begin{CD}
&& \K_i @=\K_i\\
&& @VVV @VVV\\
\Theta_i @>>> \Pi_i @>>> \Delta_i\\
 @| @V\pi_i VV  @VVV\\
\Theta_i @>\psi_i >> \Psi_i @>>> \Upsilon_i\\
\end{CD}
\end{align}
and algebra $\Ga$ is well-formed if the object $\Psi_i$ belongs not only $\U_{i+1}$ but it is also in $\Fl(\Delta_{i+1},\ldots, \Delta_n)$ for any $1\le i\le n.$
We also have that the objects $\Theta_i, \Psi_i, \Upsilon_i$ belong to $\U_{i+1},$ while $\Delta_i$ and $\K_i$ generate the right and left orthogonals $\U_{i+1}^{\perp}, {}^{\perp}\U_{i+1}$ in $\U_i,$ respectively.

\begin{remark}{\rm
Let us consider the algebra $\Ga=\kk Q/I$ of a directed quiver with relations $(Q, I)$ for which the set of vertices $Q_0=\{1,\dots , n\}$ is the ordered set of $n$ elements  and  $s(a)> t(a)$
for any arrow $a\in Q_1,$ where $s, t : Q_1\rightrightarrows  Q_0$
are the maps associating to each arrow its source and target.
In this case the algebra $\Ga$ is quasi-hereditary with respect to the ordering $\be = (e_1, e_2, \ldots , e_n).$
Moreover, for such ordering the standard modules $\Delta_i$ are isomorphic to the simple modules $\Si_i$ for all $1\le i\le n.$
The quasi-hereditary algebra $(\Ga, \be)$ is well-formed in this case because the projective module $\Pi_i$ belongs to the
subcategory ${}^{\perp}\U_{i+1}\subseteq\T$ for each $i.$  Thus, for all $1\le i\le n$ we have $\Psi_i=0.$
}
\end{remark}
\begin{remark}\label{quiver_not_wf}{\rm
Note that if we take the opposite ordering of idempotents, then  the algebra $\Ga$ is also quasi-hereditary, but in this case the standard modules are the indecomposable projective modules, and
the algebra is not necessarily well-formed.
The simplest example is a quiver $Q=(\xymatrix{
{\bullet}\ar[r]^{a} &{\bullet} \ar[r]^{b}&{\bullet}})$
with three vertices, two arrows $a, b,$ and with a single relation $ba=0.$
}
\end{remark}

\section{Geometric realizations of finite dimensional algebras}

\subsection{Geometric realizations of well-formed quasi-hereditary algebras}
In this section we discuss some geometric realizations for finite dimensional algebras.
We consider the case when an algebra is basic. By Theorem \ref{algebra}, any such algebra has a plain geometric realization.
However, Theorem \ref{algebra} only tells us that for any such algebra $\La$ there is
a perfect complex $\mE$ on a smooth projective scheme $X$ for which $\End(\mE)\cong\Lambda$ and $\Hom(\mE, \mE[l])=0$
when $l\ne 0.$
A reasonable question about finite dimensional algebras here is to find such a geometric realization, where the perfect complex $\mE$
is a vector bundle $\E$ on $X.$
We know that this question has a positive answer for any quiver algebra $\La$ by  Theorem \ref{exc_colllection_bundles}.
Moreover, in that case we can find $X$ and a vector bundle $\E$ such that the rank of $\E$ is equal to the dimension of $\La$ (see \cite[Cor. 2.8]{O_q}).

At first, we consider the case of a quasi-hereditary algebras and we will try to find a geometric realization such that
all standard modules $\Delta_i$ go to line bundles on $X.$

\begin{definition}\label{good_geom_real}
 Let $\Ga$ be a basic quasi-hereditary algebra over $\kk,$ and let $X$ be a smooth projective scheme.
 Let $\mG: \prfdg\Ga\to\prfdg X$ be a quasi-functor.
 We say that $\mG$ satisfies property (V) if the following conditions hold:
\begin{equation*}\label{V}
\parbox{.93\textwidth}{\begin{enumerate}
\item[(1)] the functor $G=\Ho(\mG): \prf\Ga\to \prf X$ is fully faithful, i.e. $\mG$ gives a plain geometric realization of
$\prfdg\Ga;$
\item[(2)] the standard modules $\Delta_i,\; 1\le i\le n$  go to line bundles $\L_i$ on $X$ under $G;$
\item[(3)] there is a line bundle $\N$ on $X$ such that $\N\in {}^{\perp}G(\prf\Ga)\subset\prf X,$ the line bundles
$\N\otimes \L_i^{-1}$ are generated by global sections,  and
$H^j(X, \N\otimes \L_i^{-1})=0$ when $j\ge 1$ for all $i=1,\dots,n.$
\end{enumerate}
}\leqno {\rm (V)}
\end{equation*}
\end{definition}

Since any module $M\in \Fl(\Delta_1,\ldots,\Delta_n)$ has a filtration with successive quotients being standard modules,
condition (2) of Definition \ref{good_geom_real} implies that any such $\Ga$\!--module goes to a vector bundle under the functor $G.$
Moreover, by the same reasoning, the vector bundle $G(M)$ has a filtration with successive quotients isomorphic to the line bundles
$\L_i.$ Now it is not difficult to check that  condition (3) of (V) implies the following condition:
\begin{equation*}
\parbox{.88\textwidth}{\begin{enumerate}
\item[(3')] there is a line bundle $\N$ on $X$ such that $\N\in {}^{\perp}G(\prf\Ga)$ and vector bundles
$\N\otimes G(M)^{\vee}$ are generated by global sections for all $M\in\Fl(\Delta_1,\ldots,\Delta_n)$ and have no higher cohomology.
\end{enumerate}}
\end{equation*}

Note that the quasi-functor $\mG: \prfdg\Ga\to\prfdg X$ gives an ff-morphism $\mg: X\to \dW$ where $\dW=\prfdg\Ga$ is the derived noncommutative scheme
related to the algebra $\Ga.$ Property 2) tells us that the standard modules $\Delta_i\in\prf\Ga$ go to the line bundles $\L_i$ on $X$
under the inverse image functor $\bL\mg^*.$

For any quasi-hereditary algebra  $\Ga$ the projective modules $\Pi_i$ belong to the subcategory $\Fl(\Delta_1,\ldots,\Delta_n)$
and, hence, they go to vector bundles under the functor $G.$ Denote by $\P_i$ the vector bundles $G(\Pi_i)$ for all $i=1,\dots, n.$
Since the functor $G$ is fully faithful, the vector bundles $\E_k=\bigoplus_{i=k}^n \P_i$ possess the following properties:
\[
\Hom_X(\E_k, \E_k)\cong \Ga_k \quad\text{and}\quad \Ext^l_X(\E_k, \E_k)=0 \quad\text{ for all}\quad l\ne 0.
\]
Thus, if a quasi-functor $\mG: \prfdg\Ga\to\prfdg X$ satisfies property (V), then  the algebra $\Ga$ goes to a vector bundle $\E_1$ under such a geometric realization.

The following proposition gives us an inductive step of a general construction.

\begin{proposition}\label{step}
Let $(\Ga, \be)$ be a basic well-formed quasi-hereditary algebra with indecomposable projective modules
$\Pi_1,\ldots,\Pi_n.$ Let $\Ga_{l}$ for $l=1,\dots, n$ be the algebras of endomorphisms $\End_{\Ga}(\bigoplus_{i=l}^{n}\Pi_i).$
Suppose there exist a smooth projective scheme $X_{k+1}$ and
 a quasi-functor
$\mG_{k+1}:\prfdg\Ga_{k+1}\to \prfdg X_{k+1}$ that satisfies property (V).
Then there are a smooth projective scheme $X_{k}$ and a quasi-functor $\mG_{k}: \prfdg\Ga_{k}\to \prfdg X_{k}$ that also satisfies
property (V). Moreover, the scheme $X_k\cong\PP(\F_k)$ is a projective  vector bundle over $X_{k+1},$ and the restriction
of $G_k$ on the subcategory $\prf\Ga_{k+1}\subset\prf\Ga_{k}$ is isomorphic to $\bL p^*\ldot G_{k+1},$
where $p:\PP(\F_k) \to X_{k+1}$ is the natural projection.
\end{proposition}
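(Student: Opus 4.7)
The approach is to realize $X_k$ as a projective bundle $\PP(\F_k)\to X_{k+1}$ over $X_{k+1}$, and to extend $\mG_{k+1}$ to $\mG_k$ via the gluing presentation $\prfdg\Ga_k\simeq \pt\underset{\mT_k}{\oright}\prfdg\Ga_{k+1}$ from Proposition \ref{decomp_hereditary}. By quasi-heredity and well-formedness, both $\Theta_k$ and $\Psi_k$ lie in $\Fl(\Delta_{k+1},\dots,\Delta_n)$, so $G_{k+1}$ sends them to vector bundles $\tilde\Theta_k$ and $\G_k$ on $X_{k+1}$, each filtered by the line bundles $\L_j$ with $j\ge k+1$. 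The map $\psi_k:\Theta_k\to\Psi_k$ is realized geometrically as a morphism $\tilde\Theta_k\to \G_k$, whose cone $\Upsilon_k^{\mathrm{geom}}:=G_{k+1}(\Upsilon_k)$ is itself a vector bundle on $X_{k+1}$; securing this is the essential use of well-formedness. Then set
\[
\F_k := (\Upsilon_k^{\mathrm{geom}}\otimes \N_{k+1}^{-1})\oplus \cO_{X_{k+1}}^{\oplus t},\quad X_k := \PP(\F_k),\quad \L_k := p^*\N_{k+1}\otimes \cO_{X_k}(-1),
\]
where $\N_{k+1}$ is the line bundle from property (V), condition (3), at level $k+1$, the integer $t\ge 0$ is chosen so that $\operatorname{rank}\F_k\ge 3$, and $p:X_k\to X_{k+1}$ is the projection.

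The quasi-functor $\mG_k$ is assembled via Proposition \ref{gluing_quasifunctors}: on $\prfdg\Ga_{k+1}$ it equals $\bL p^*\circ \mG_{k+1}$, the object $\Delta_k$ goes to $\L_k$, and the matching of bimodule data comes from the tautological inclusion $\cO_{X_k}(-1)\hookrightarrow p^*\F_k$ combined with the projection formula and $\bR p_*\cO_{X_k}(1)\cong \F_k^\vee$, yielding
\[
\Hom_{X_k}(\L_k,\,p^*G_{k+1}(M)) \cong \Hom_{X_{k+1}}(\N_{k+1}\otimes \F_k,\,G_{k+1}(M)).
\]
Under the splitting of $\F_k$ the $\Upsilon_k^{\mathrm{geom}}$-summand matches $\Hom_{\Ga_k}(\Delta_k,M)\cong \Hom(\Upsilon_k,M)$, which is produced by the triangle $\K_k\to\Delta_k\to\Upsilon_k$ of diagram \ref{diagr_wf} together with $\K_k\in{}^{\perp}\U_{k+1}$, so that $\Hom(\K_k,M)=0$.

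Property (V) for $\mG_k$ is then checked as follows. Condition (2) holds by construction: $\Delta_i$ for $i>k$ pulls back to $\L_i$ on $X_k$, and $\Delta_k$ goes to $\L_k$. Fully-faithfulness of $\Ho(\mG_k)$ reduces, via the semi-orthogonal decomposition $\prf\Ga_k=\langle \Delta_k,\prf\Ga_{k+1}\rangle$, to the Hom-isomorphism above together with the vanishing $\Hom_{X_k}(p^*G_{k+1}(\prf\Ga_{k+1}),\L_k)=0$, which is immediate from $\bR p_*\cO_{X_k}(-1)=0$. For condition (3), set $\N_k:=p^*\N_{k+1}\otimes \cO_{X_k}(s)$ with an integer $s$ in the nonempty range $1\le s\le \operatorname{rank}\F_k-2$: the relative vanishings $\bR p_*\cO_{X_k}(-s)=0$ and $\bR p_*\cO_{X_k}(-s-1)=0$ give $\N_k\in {}^{\perp}G_k(\prf\Ga_k)$, while ampleness of $\cO_{X_k}(1)$ on fibres combined with the inductive positivity of $\N_{k+1}$ (possibly after first replacing $\N_{k+1}$ by a sufficiently positive twist, which does not harm any previously established step) yields the required global generation and higher-cohomology vanishing of $\N_k\otimes \L_i^{-1}$ for all $k\le i\le n$. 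The principal obstacle throughout is the claim that $\Upsilon_k^{\mathrm{geom}}$ is an honest vector bundle on $X_{k+1}$; everything else is a formal consequence of standard projection formulas and relative vanishing on projective bundles.
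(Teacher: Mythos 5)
Your overall strategy is the paper's strategy (pass to the gluing $\pt\underset{\;\mT_k}{\oright}\prfdg\Ga_{k+1}$, build a vector bundle $\F_k$ on $X_{k+1}$ whose $\dHom$ into the image of $G_{k+1}$ computes $\dHom(\Delta_k,-)$, take $X_k=\PP(\F_k)$ and $\L_k$ a twist of $\cO_{X_k}(-1)$, and finish with relative vanishing plus Serre-type positivity). But there is a genuine gap exactly at the point you yourself flag as ``the principal obstacle'': the claim that $\Upsilon_k^{\mathrm{geom}}=G_{k+1}(\Upsilon_k)$ is a vector bundle is false in general, and well-formedness does \emph{not} secure it. The object $\Upsilon_k$ is the cone of $\psi_k:\Theta_k\to\Psi_k$, i.e.\ the two-term complex $\{\Theta_k\to\Psi_k\}$ in degrees $-1$ and $0$; well-formedness only guarantees that $\Psi_k$ lies in $\Fl(\Delta_{k+1},\ldots,\Delta_n)$ and that $\{\Pi_k\to\Psi_k\}\in{}^{\perp}\U_{k+1}$, not that $\psi_k$ is injective. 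For instance, for a directed quiver algebra with the simple modules as standard modules one has $\Psi_k=0$ while $\Theta_k\neq 0$, so $\Upsilon_k\cong\Theta_k[1]$ and $G_{k+1}(\Upsilon_k)$ is a \emph{shifted} vector bundle; your $\F_k=(\Upsilon_k^{\mathrm{geom}}\otimes\N_{k+1}^{-1})\oplus\cO^{\oplus t}$ and hence $\PP(\F_k)$ are then undefined.

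The paper repairs precisely this point, and this is where condition (3) of property (V) earns its keep beyond the positivity you use at the end. From the global generation of $\N\otimes G_{k+1}(\Theta_k)^{\vee}$ one gets a surjection $(\N^{-1})^{\oplus m}\twoheadrightarrow G_{k+1}(\Theta_k)^{\vee}$, hence a subbundle embedding $G_{k+1}(\Theta_k)\hookrightarrow \N^{\oplus m}\oplus G_{k+1}(\Psi_k)$; the quotient $\F$ is then an honest vector bundle, and since $\N\in{}^{\perp}G_{k+1}(\prf\Ga_{k+1})$ the extra summand $\N^{\oplus m}$ is invisible to $\dHom(-,\mG_{k+1}(\mQ))$, so that $\dHom(\F,\mG_{k+1}(\mQ))\cong\dHom(\Upsilon_k,\mQ)\cong\dHom(\Delta_k,\mQ)$ exactly as your formal computation requires. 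With this corrected definition of $\F_k$ the rest of your argument goes through; one further caution is that you should not ``replace $\N_{k+1}$ by a sufficiently positive twist'' after the fact, since $\N_{k+1}$ enters the construction of $\F_k$ itself --- the paper instead takes $\N_k=\cO_{X_k}(1)\otimes p^*\R^{\otimes s}$ for an auxiliary ample $\R$ on $X_{k+1}$ and invokes Serre vanishing, which decouples the positivity needed at step $k$ from the line bundle used at step $k+1$.
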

\begin{proof}
Consider the quasi-functor $\mG_{k+1}:\prfdg\Ga_{k+1}\to \prfdg X_{k+1}.$
By Proposition \ref{decomp_hereditary}, we can identify the DG category $\prfdg\Ga_{k+1}$ with the DG subcategory $\dU_{k+1}\subset\prfdg\Ga$ that is generated by the standard modules $\Delta_{k+1},\dots, \Delta_n.$
Consider the quasi-functor $\mG_{k+1}$ as a
quasi-functor from $\dU_{k+1}$ to $\prfdg X_{k+1}.$

Since $\mG_{k+1}$ satisfies property (V), the standard modules $\Delta_{k+1},\ldots,\Delta_n$ go to line bundles
$\L_{k+1},\ldots, \L_{n},$ respectively.
The DG subcategory of $\prfdg X_{k+1}$ that is generated by the line bundles $\L_{k+1},\ldots, \L_{n}$ is quasi-equivalent to $\dU_{k+1}$ and the quasi-functor $\mG_{k+1}$
establishes this quasi-equivalence.

Any module $M\in \Fl(\Delta_{k+1},\ldots,\Delta_n)$ goes to a vector bundle
under the functor $G_{k+1}=\Ho(\mG_{k+1}).$
Take now the projective module $\Pi_k$ and consider the short exact sequence
\begin{align}\label{res_stand}
0\lto \Theta_k\lto \Pi_k\lto\Delta_k\lto 0.
\end{align}
By the definition of a quasi-hereditary algebra, the module $\Theta_k$ belongs to $\Fl(\Delta_{k+1},\ldots,\Delta_n).$
The algebra $\Ga$ is well-formed and, hence, there is a module $\Psi_k\in \Fl(\Delta_{k+1},\ldots,\Delta_n)$
such that the complex $\K_k=\{\Pi_k\stackrel{\pi_k}{\lto}\Psi_k\}$ belongs to ${}^{\perp}\U_{k+1}.$
As in diagram (\ref{diagr_wf}) we denote by $\psi_k: \Theta_k\to \Psi_k$ the composition of $\pi_k$ with the natural inclusion of $\Theta_k$ into $\Pi_k.$
By Proposition \ref{decomp_hereditary}, the DG category $\dU_{k}\subset\prfdg \Ga$ is quasi-equivalent to a gluing
$\pt\underset{\;\mT_k}{\oright}\dU_{k+1},$ where the left DG $\dU_{k+1}$\!--module $\mT_k$ is
$\dHom_{\dU_k}(\Delta_k, \mQ)$ with $\mQ\in\dU_{k+1}.$ Using (\ref{res_stand}) as the resolution for the standard module $\Delta_k,$ we obtain the following
 quasi-isomorphism of  left DG $\dU_{k+1}$\!--modules:
 \[
\mT_k= \dHom_{\dU_k}(\Delta_k, \mQ)\cong \dHom_{\dU_{k+1}}(\Upsilon_k, \mQ)\quad\text{ with}\quad \mQ\in\dU_{k+1},
 \]
where $\Upsilon_k\in\dU_{k+1}$ is the complex $\{\Theta_k\stackrel{\psi_k}{\lto} \Psi_k\}$ concentrated in degrees $-1$ and $0.$

Consider the vector bundle $G_{k+1}(\Theta_k).$
The morphism $\psi_k$ induces a map $G_{k+1}(\psi_k)$ between the vector bundles $G_{k+1}(\Theta_k)$ and $G_{k+1}(\Psi_k)$ and also a map
$G_{k+1}(\Psi_k)^{\vee}\to G_{k+1}(\Theta_k)^{\vee}$
between the dual vector bundles.
By assumption (3) of (V) and its consequence (3'), there is a surjection
$(\N^{-1})^{\oplus m}\twoheadrightarrow G_{k+1}(\Theta_k)^{\vee}$ for some $m\in \NN.$

Consider the induced map $(\N^{-1})^{\oplus m}\oplus G_{k+1}(\Psi_k)^{\vee} \to G_{k+1}(\Theta_k)^{\vee}$ which is also
a surjection.
Denote by $\F$ the vector bundle on $X$ that is dual to the kernel of this surjection.
We obtain the following exact sequence of vector bundles on $X_{k+1}$
\begin{equation}\label{ext}
0\lto G_{k+1}(\Theta_k) \lto \N^{\oplus m}\oplus G_{k+1}(\Psi_k) \lto \F \lto 0.
\end{equation}
Since the line bundle $\N$ belongs to the orthogonal ${}^{\perp}G_{k+1}(\U_{k+1}),$ we obtain quasi-isomorphisms of
left DG $\dU_{k+1}$\!--modules
\[
\dHom_{\prfdg X_{k+1}}(\F, \mG_{k+1}(\mQ))\cong \dHom_{\dU_{k+1}}(\Upsilon_k, \mQ)\cong \dHom_{\dU_k}(\Delta_k, \mQ) \quad\text{ with}\quad \mQ\in\dU_{k+1}.
\]

Taking  $m$ sufficiently large, we can assume that the rank of $\F$ is greater  than $2.$
Let us consider the projective bundle $p: \PP(\F)\to X_{k+1}$ and denote it by $X_{k}.$
There are natural exact sequences on $X_{k}$ of the following form:
\[
0\to \varOmega_{X_{k}/X_{k+1}}(1)\to p^*\F^{\vee}\to \cO_{X_{k}}(1)\to 0\quad\text{and}\quad
0\to \cO_{X_{k}}(-1)\to p^*\F\to \T_{X_{k}/X_{k+1}}(-1)\to 0,
\]
where $\cO_{X_{k}}(-1)$ is the tautological line bundle, and $\T_{X_{k}/X_{k+1}}, \Omega_{X_{k}/X_{k+1}}$ are the relative tangent and the relative cotangent bundles, respectively.
We have $\bR p_*\cO_{X_{k}}(1)\cong \F^{\vee}$ and $\bR p_*\cO_{X_{k}}(-1)=0.$

Denote by  $\wt{\L}_i$ the pull back line bundles $p^*\L_i$ for $i=k+1,\dots, n$ and consider the DG subcategory $\wt\dU_{k+1}\subset\prfdg X_k$
that is generated by these line bundles. Since the functor $\bL p^*$ is fully faithful, the DG category $\wt\dU_{k+1}$ is quasi-equivalent to $\dU_{k+1}.$
Put  $\wL_{k}=\cO_{X_k}(-1)$ and consider the DG subcategory $\dV\subset\prfdg X_k$ generated by the line bundles $\wt{\L}_k,\ldots,\wt{\L}_n.$
Since $\bR p_*\cO_{X_{k}}(-1)=0,$ the line bundles $\wt{\L}_k,\ldots,\wt{\L}_n$ form a full exceptional collection
in the homotopy category $\V=\Ho(\dV)$ and the category $\V\subset\prf X_k$ is admissible.

By Proposition \ref{gluing_semi-orthogonal}, the DG category $\dV$ is quasi-equivalent to a gluing $\pt\underset{\;\mS}{\oright}\wt\dU_{k+1}$
(where $\pt=\prfdg\kk$)
with respect to a left DG $\wt\dU_{k+1}$\!--module $\mS=\dHom_{\dV}(\wt\L_k, -).$
There is the following sequence of quasi-isomorphisms of left DG $\dU_{k+1}$\!--modules
\begin{multline*}
\mS=\dHom_{\dV}(\wt\L_k, \bL p^*\mG_{k+1}(\mQ))\cong \dHom_{\prfdg X_k}(\cO_{X_k}, \bL p^*\mG_{k+1}(\mQ)\otimes \cO_{X_k}(1) )\cong\\
\cong
\dHom_{\prfdg X_{k+1}}(\cO_{X_{k+1}}, \mG_{k+1}(\mQ)\otimes \F^{\vee} )
\cong \dHom_{\prfdg X_{k+1}}(\F, \mG_{k+1}(\mQ))\cong \dHom_{\dU_k}(\Delta_k, \mQ)\cong\mT_{k}.
\end{multline*}

Thus, by Proposition \ref{gluing_quasifunctors}, we obtain that the DG category $\dV\cong\pt\underset{\;\mS}{\oright}\wt\dU_{k+1}$ is quasi-equivalent to the DG category
$\dU_k\cong \pt\underset{\;\mT_k}{\oright}\dU_{k+1}.$ By Proposition \ref{decomp_hereditary}, these DG categories are also quasi-equivalent to $\prfdg\Ga_k.$
Therefore, we obtain a quasi-functor $\mG_k:\prfdg\Ga_k\to \prfdg X_k$ that establishes a quasi-equivalence between $\prfdg\Ga_k$ and $\dV.$
By construction,  conditions (1) and (2) of (V) hold for the quasi-functor $\mG_k.$

Finally, we have to show that condition (3) also holds for an appropriate line bundle $\wt \N$ on
$X_k.$  Choosing $\wt \N$ as a line bundle of the form $\cO_{X_k}(1)\otimes p^*\R^{\otimes s},$ where
$\R$ is an ample line bundle on $X_{k+1}$ and $s$ is sufficiently large, we can guarantee that  condition (3) will hold.
Indeed, since the rank of $\F$ is greater than $2,$
the line bundle $\wt \N$ belongs to ${}^{\perp}G_k(\prf\Ga_k).$
Moreover, for $k<i\le n$ we have isomorphisms
\[
H^j(X_k, \; \wt{\L}_i^{-1}\otimes \wt{\N})\cong H^j(X_{k+1}, \L^{-1}_i\otimes\F^{\vee}\otimes\R^{\otimes s}).
\]
Additionally, for the line bundle $\wt{\L}_{k}^{-1}\otimes \wt{\N}$ we also have
\[
H^j(X_k, \; \wt{\L}_{k}^{-1}\otimes \wt{\N})\cong H^j(X_{k+1}, S^2(\F^{\vee})\otimes\R^{\otimes s}).
\]
Taking a sufficiently large $s,$ we obtain vanishing of all cohomology for $j>0,$ by Serre vanishing Theorem, and can guarantee that
all these bundles are generated by global sections  on $X_k.$
Since the natural maps $p^*\F^{\vee}\to\cO_{X_k}(1)$ and $p^*S^2(\F^{\vee})\to\cO_{X_k}(2)$ are surjective,
condition (3) of (V) also holds for the quasi-functor $\mG_k.$
\end{proof}

We can also give a precise construction of the vector bundles on $X_k$ that are the images of the projective modules under the functor $G_k.$
Denote by $\P_{k+1},\ldots,\P_n$ the vector bundles on $X_{k+1}$ that are the images of the projective modules $\Pi_{k+1},\ldots, \Pi_n\in\Fl(\Delta_{k+1},\ldots,\Delta_n)$ under the functor $G_{k+1}.$
Under the functor $G_k$ these projective modules go to the vector bundles $\wt{\P_i}=p^*\P_i.$
Let us construct the vector bundle $\P_k$ which is the image of $\Pi_k.$
Consider the sequence of isomorphisms
\begin{multline*}
\Ext^1_{X_k}(\cO_{X_k}(-1),\; p^*G_{k+1}(\Theta_k))\cong H^1(X_k, p^*G_{k+1}(\Theta_k)\otimes \cO_{X_k}(1))\cong H^1(X_{k+1}, G_{k+1}(\Theta_k)\otimes \F^{\vee})\\
\cong \Ext^1_{X_{k+1}}(\F,\; G_{k+1}(\Theta_k)).
\end{multline*}
The element $e\in\Ext^1_{X_{k+1}}(\F,\; G_{k+1}(\Theta_k)),$ which defines the short exact sequence (\ref{ext}),
gives some element $e'\in\Ext^1_{X_k}(\cO(-1),\; p^* G_{k+1}(\Theta_k)).$ The element $e'$ induces the following extension
\begin{equation}\label{projective}
0\lto p^*G_{k+1}(\Theta_k)\lto \wt{\P}_{k}\lto\cO_{X_k}(-1)\lto 0
\end{equation}
that can be considered as the definition of the vector bundle $\wt{\P}_{k}.$
Finally, the algebra $\Ga_k$ itself goes to the vector bundle  $\wt{\E_k}=\bigoplus_{i=k}^n \wt{\P}_i$ under the functor $G_k.$

Proposition \ref{step} as an induction step implies the following theorem.

\begin{theorem}\label{main_quiver}
Let $(\Ga, \be)$ be a basic well-formed quasi-hereditary algebra. Then there exist a smooth projective scheme $X$ and a quasi-functor
$\mG: \prfdg\Ga\to\prfdg X$ such that the following conditions hold:
\begin{enumerate}
\item[1)] the induced homotopy functor $G=\Ho(\mG):\prf\Ga\to \prf X$ is fully faithful;
\item[2)] the standard modules $\Delta_i$ go to line bundles $\L_i$ on $X$ under $G;$
\item[3)] the scheme $X$ is a tower of projective bundles and has a full exceptional collection.
\end{enumerate}
\end{theorem}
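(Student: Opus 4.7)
The plan is to prove Theorem~\ref{main_quiver} by descending induction on $k$ from $n$ down to $1$, constructing smooth projective schemes $X_k$ together with quasi-functors $\mG_k : \prfdg\Ga_k \to \prfdg X_k$ satisfying property~(V) of Definition~\ref{good_geom_real}, where $\Ga_k = \End_{\Ga}(\bigoplus_{i=k}^n \Pi_i)$. Each $\Ga_k$ is a basic quasi-hereditary algebra by Proposition~\ref{decomp_hereditary}, and Proposition~\ref{step} provides the inductive step that produces $(X_k, \mG_k)$ from $(X_{k+1}, \mG_{k+1})$ with $X_k = \PP(\F_k)$ a projective bundle over $X_{k+1}$. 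Its hypothesis requires only the well-formedness of $\Ga$ at the index $k$ in question, which is granted by assumption.

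For the base case $k = n$, since $\Ga$ is basic we have $\Ga_n = e_n \Ga e_n \cong \kk$, so $\prfdg\Ga_n \cong \prfdg\kk$ with the unique standard module $\Delta_n = \Pi_n$ as generator. I would take $X_n := \PP^1_{\kk}$ and let $\mG_n$ send $\Delta_n$ to $\L_n := \cO_{X_n}$, which is trivially fully faithful. Property~(V) is then verified by choosing $\N := \cO_{X_n}(1)$: the left orthogonality $\N \in {}^{\perp}\langle \cO_{X_n}\rangle$ holds because $H^*(\PP^1, \cO(-1)) = 0$, while $\N \otimes \L_n^{-1} = \cO(1)$ is globally generated with vanishing higher cohomology.

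Applying Proposition~\ref{step} successively $n-1$ times produces $(X, \mG)$ with $X := X_1$ and $\mG := \mG_1 : \prfdg\Ga \to \prfdg X$ satisfying property~(V), which immediately yields conditions 1) and 2) of the theorem. By construction, $X$ is a tower of projective bundles with base $X_n = \PP^1$. For condition 3), the full exceptional collection on $X$ is obtained by a secondary induction using Orlov's projective bundle formula~\cite{Blow}: if $Y$ admits a full exceptional collection and $\F$ is a vector bundle on $Y$ of rank $r$, then $\prf\,\PP(\F)$ admits a full exceptional collection assembled from $r$ twists of the pullback of the collection on $Y$. Starting from $\PP^1$ and climbing the tower, each $X_k$ inherits a full exceptional collection, and so does $X$ in particular. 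All the substantive technical content is packaged in Proposition~\ref{step}; the present argument just organizes the induction and handles the base case, so no serious obstacle is expected.
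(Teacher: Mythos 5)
Your proposal is correct and follows essentially the same route as the paper: the base case is the one-dimensional algebra realized by $\cO_{\PP^1}$ on $X=\PP^1$ with $\N=\cO(1)$, the inductive step is exactly Proposition~\ref{step}, and condition 3) follows from the projective bundle formula of \cite{Blow} since $X$ is built as a tower of projective bundles. The only difference is cosmetic (you phrase it as a descending induction on $k$ while the paper phrases it as induction on $n$), and your verification of property (V) in the base case is the same as the paper's.
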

\begin{proof}
The proof proceeds by induction on $n.$ The base of induction is $n=1$ and $\Ga=\kk.$
In this case $X=\PP^1,$ the quasi-functor $\mG$ sends $\Ga$ to $\cO_{\PP^1},$ and $\N=\cO(1).$
The inductive step is Proposition \ref{step}. By construction, the scheme $X$ is a tower of projective bundles and, hence,
 has a full exceptional collection.
\end{proof}

\begin{remark}
{\rm Let us note that the property of a quasi-hereditary algebra $\Ga$ to be well-formed is essential.
Indeed, consider the algebra of the quiver $Q=(\xymatrix{
{\bullet}\ar[r]^{a} &{\bullet} \ar[r]^{b}&{\bullet}})$
with the relation $ba=0$ as in Remark \ref{quiver_not_wf}. This algebra is quasi-hereditary but not well-formed in the case when the standard modules
are the indecomposable projective modules. It is easy to see that we can not find a geometric realization for which the standard modules go to line bundles.
Indeed,  any non-zero morphism of line bundles on a smooth
irreducible projective scheme is an isomorphism at the generic point and this contradicts the fact that $ba=0.$
}
\end{remark}

\subsection{Auslander construction and geometric realizations of finite dimensional algebras}
Now we will discuss geometric realizations for an arbitrary basic finite dimensional algebra.

Let $\La$ be a basic finite-dimensional algebra over a field $\kk.$
Denote by $\rd$ the Jacobson radical of $\La.$ We know that $\rd^N=0$ for some $N.$
Define the index of nilpotency of $\La$ as the smallest integer $N$
such that $\rd^N=0.$
The following amazing result was proved by M.~Auslander.
\begin{theorem}\cite{Au}
Let $\Lambda$ be a finite-dimensional algebra with index of nilpotency $N.$ Then the finite-dimensional algebra
$\wt{\Ga}=\End_{\Lambda}(\bigoplus_{p=1}^{N} \Lambda/\rd^p)$ has the following properties:
\begin{enumerate}
\item[1)] $\gldim\wt{\Ga}\le N+1;$
\item[2)] there is a finite projective $\wt{\Ga}$\!--module $\Pi$ such that $\End_{\wt{\Ga}}(\Pi)\cong \La.$
\end{enumerate}
\end{theorem}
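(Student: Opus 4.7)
Set $M=\bigoplus_{p=1}^{N}\La/\rd^p$ and consider the additive functor $F=\Hom_{\La}(M,-)\colon\md\La\to\md\wt{\Ga}$. Since $\rd^N=0$ we have $\La/\rd^N=\La$, so $\La$ itself is a direct summand of $M$; the functor $F$ is fully faithful on $\operatorname{add}M$ and carries this subcategory bijectively onto the category of finitely generated projective $\wt{\Ga}$-modules.

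Property (2) is then immediate. Put $\Pi=F(\La)$: since $\La\in\operatorname{add}M$, $\Pi$ is a direct summand of $F(M)=\wt{\Ga}$ as a $\wt{\Ga}$-module, hence finite projective, and by full faithfulness of $F$ on $\operatorname{add}M$ we get $\End_{\wt{\Ga}}(\Pi)=\End_{\La}(\La)=\La$.

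For the global-dimension bound in (1), any $Y\in\md\wt{\Ga}$ admits a two-term projective presentation $F(M_1)\to F(M_0)\to Y\to 0$ induced from a morphism $f\colon M_1\to M_0$ in $\operatorname{add}M$; left exactness of $F$ identifies the second syzygy of $Y$ with $F(K)$ where $K=\ker f$. It therefore suffices, for an arbitrary $\La$-module $K$, to exhibit an exact sequence $0\to N_{N-1}\to\cdots\to N_0\to K\to 0$ in $\operatorname{add}M$ that remains exact after applying $F$; concatenation with the presentation then yields a projective $\wt{\Ga}$-resolution of $Y$ of length at most $N+1$.

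I would prove the required resolution by induction on the Loewy length $p$ of $K$ (which always satisfies $p\le N$), strengthened to a bound of length $\le p-1$. The base $p=1$ is trivial since $K$ is then semisimple and lies in $\operatorname{add}(\La/\rd)\subseteq\operatorname{add}M$. For the inductive step I would construct a right $\operatorname{add}M$-approximation $\pi\colon M_0'\twoheadrightarrow K$, with $M_0'$ assembled from copies of $\La/\rd^q$ for $1\le q\le p$, arranging simultaneously (i) that every morphism $\La/\rd^q\to K$ factors through $\pi$, which forces $F$ to preserve exactness of $0\to\ker\pi\to M_0'\to K\to 0$, and (ii) that $\pi$ is a projective cover in $\md(\La/\rd^p)$, so that $\ker\pi\subseteq M_0'\cdot\rd$ and hence has Loewy length at most $p-1$. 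Invoking the induction hypothesis on $\ker\pi$ then completes the resolution. The technical heart of the argument, and the main obstacle, is condition (ii) in the presence of (i): the defining virtue of Auslander's choice $M=\bigoplus_{p=1}^{N}\La/\rd^p$ is precisely that both conditions can be arranged simultaneously, the presence of every quotient $\La/\rd^p$ as a summand of $M$ supplying the flexibility needed to strictly reduce Loewy length at each inductive step.
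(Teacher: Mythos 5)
The paper itself offers no proof of this theorem---it is quoted from Auslander's notes \cite{Au}---so there is no internal argument to compare yours with; I can only assess your proposal on its own terms. Part 2) is complete and correct, and your reduction of part 1) to the claim that $\operatorname{pd}_{\wt{\Ga}}\Hom_{\La}(M,K)\le \operatorname{LL}(K)-1$ for every $\La$\!--module $K$ (via the equivalence $\operatorname{add}M\simeq \operatorname{proj}\wt{\Ga},$ left exactness of $F=\Hom_{\La}(M,-),$ and passage to second syzygies) is the standard and correct skeleton, with the right count $2+(N-1)=N+1.$

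The inductive step, however, is a genuine gap and not merely a deferred technicality: the two conditions (i) and (ii) you propose to arrange simultaneously are in general incompatible. Condition (ii) forces $\pi\colon M_0'\to K$ to be a minimal epimorphism with $M_0'=P/P\rd^{p}$ for a projective cover $P\to K,$ while condition (i) demands that every map $\La/\rd^{q}\to K$ lift along $\pi.$ Already for $\La=\kk[x]/(x^{3})$ and $K=\kk\oplus\kk[x]/(x^{2})$ (so $p=2$) the projective cover over $\La/\rd^{2}$ is $(\kk[x]/(x^{2}))^{2}\to K,$ and the induced map $\Hom(\La/\rd,(\kk[x]/(x^{2}))^{2})\to\Hom(\La/\rd,K)$ has one-dimensional image inside a two-dimensional target: the socle element spanning the summand $\kk$ does not lift. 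To restore (i) one must enlarge $M_0'$ by summands $\La/\rd^{q}$ with $q<p$ surjecting onto the submodules $\{k\in K: k\rd^{q}=0\},$ but then (ii) fails and the resulting kernel is only seen to satisfy $(\ker\pi)\rd\subseteq \Omega K\subseteq P\rd,$ which bounds its Loewy length by $p$ rather than $p-1,$ so the induction does not close. Producing a surjective right $\operatorname{add}M$\!--approximation whose kernel has \emph{strictly} smaller Loewy length is precisely the content of Auslander's key lemma and is the part of the proof you have not supplied; you should either reproduce that construction from \cite{Au} or substitute a different bound (e.g.\ the quasi-hereditary route of \cite{DR2}, which however yields a weaker estimate than $N+1$).
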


The algebra $\wt{\Ga}$ is usually not basic, even if the algebra $\La$ is basic.
Indeed, since $\La$ is basic, each non-projective
indecomposable summand occurs in a direct decomposition of $\bigoplus_{p=1}^{N} \Lambda/\rd^p$ with
multiplicity $1,$ whereas each indecomposable projective $\La$\!--module of Loewy length
$l$ occurs with multiplicity $N - l + 1.$ If we delete the repeated copies of the indecomposable projective summands of $\bigoplus_{p=1}^{N} \Lambda/\rd^p,$
one obtains a module whose
endomorphism ring is basic and Morita equivalent to $\wt{\Ga}.$ Let us describe it more precisely.

Let us denote by $\{f_1,\ldots, f_m\}$ a complete sequence of  primitive orthogonal idempotents of the basic algebra $\La$
so that $\sum_{j=1}^m f_j=1.$
Let $P_j=f_j\La$ with $1\le j\le m$ be the corresponding indecomposable projective $\La$\!--modules and let $S_j=P_j/P_j\rd$  be the simple
right $\La$\!--modules. The quotient algebra $\overline{\La}=\La/\rd$ is semisimple, and it is isomorphic to
$\bigoplus_{j=1}^m S_j$ as $\La$\!--module. Moreover, since $\La$ is basic, the algebra $\overline{\La}$ is isomorphic to $\underbrace{\kk\times\cdots\times \kk}_{m}.$

We will choose a linear order on the set of idempotents $\{ f_1,\ldots, f_m\}$ such that $\LL(P_k)\ge \LL(P_l),$ when $k\le l.$ Here $\LL(P_j)$ denotes the Loewy length
of the projective module $P_j,$ i.e. the minimal positive integer $i$ such
that $P_j\rd^i = 0.$  Consider the finite set $T$ consisting of pairs $(j,l)$   with
$1\le j\le m$ and $1\le l\le \LL(P_j).$ The cardinality of $T$ is equal to $n=\sum_{j=1}^{m} \LL(P_j).$
With any element $t=(j,l)$ of $T$ we associate the $\La$\!--module $M_{t}\cong P_j/P_j\rd^l.$
We introduce a linear order on $T$ by the following rule:
$t_1=(j_1, l_1)< t_2=(j_2, l_2)$ if and only if $l_1\ge l_2$ and $j_1< j_2$ in the case $l_1=l_2.$
In particular, we have isomorphisms $M_1=P_1,$ while $M_n=S_m.$

Consider the $\La$\!--module $M=\bigoplus_{t\in T}M_t$ and denote by $\Ga$ the algebra of endomorphisms $\End_{\La}(M).$
Since all the  modules $M_t$ are indecomposable and non-isomorphic to each other,  the algebra $\Ga$ is basic.
Its complete set of  primitive orthogonal idempotents $\{e_1,\ldots, e_n\}$ is in bijection with the set $T$ and  has a linear order
introduced above. We fix this linear order and denote it by $\be.$
The $\La$\!--module $M$ gives a standard functor between the abelian categories of modules
\[
\Hom_{\La}(M, -): \Md\La\lto \Md\Ga
\]
that is left exact. For a $\La$\!--module $N$ we denote by $\widehat{N}$ the $\Ga$\!--module $\Hom_{\La}(M, N)$ for shortness.
Since the $\La$\!--module $M$ is a generator in the abelian category $\Md\La,$ for any two $\La$\!--modules $N_1$ and $N_2$ the canonical map
\begin{equation}\label{funct}
\Hom_{\La}(N_1, N_2)\stackrel{\sim}{\lto}\Hom_{\Ga}(\widehat{N}_1, \widehat{N}_2)
\end{equation}
is an isomorphism.
By construction, the indecomposable projective $\Ga$\!--modules $\Pi_t=e_t\Ga$ are isomorphic to $\widehat{M_t}.$ The submodules $M_t\rd\subset M_t$
induce the submodules $\Theta_t=\widehat{M_t\rd}\subset\Pi_t,$ and we denote by $\Delta_t$ the quotient $\Ga$\!--modules $\Pi_t/\Theta_t.$

\begin{proposition}
Let $\La$ be a basic finite dimensional algebra over a field $\kk.$ Then the algebra $(\Ga, \be)$ constructed above is a basic well-formed quasi-hereditary $\kk$\!--algebra for which the
standard modules are exactly $\Delta_t=\widehat{M_t}/\widehat{M_t\rd},$ with $t\in T.$
\end{proposition}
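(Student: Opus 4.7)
The plan is to verify in turn that $\Ga$ is basic, that the modules $\Delta_t=\widehat{M_t}/\widehat{M_t\rd}$ are the standard modules associated to $\be$, that both conditions 1) and 2) of Definition~\ref{quasi-her} hold, and that $\Ga$ is well-formed. Baseness is immediate, since the summands $M_t$ of $M$ are pairwise non-isomorphic indecomposable $\La$-modules, whence the idempotents $e_t\in\Ga$ are primitive orthogonal.

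The heart of the identification of $\Delta_t$ as the standard module at $t$ is to show that $\Theta_t=\widehat{M_t\rd}$ coincides with the trace of $\bigoplus_{s>t}\Pi_s$ inside $\Pi_t$; by general principles this trace is the kernel of the canonical surjection from $\Pi_t$ onto its largest quotient having no composition factor $\Sigma_s$ for $s>t$. For one inclusion, I would show that for every $s>t$ and every $\phi\in\Hom_\La(M_s,M_t)\cong\Hom_{\Ga}(\Pi_s,\Pi_t)$, the image of $\phi$ lies in $M_t\rd$. The condition $s>t$ splits into case (A) $l_s<l$ and case (B) $l_s=l$ with $j_s>j$. In case (B), $j_s\neq j$, so any map $P_{j_s}\to P_j$ between indecomposable projectives with distinct simple tops lands in the radical, forcing $\operatorname{Im}\phi\subseteq M_t\rd$. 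In case (A), if a lift $\tilde\phi\colon P_{j_s}\to P_j/P_j\rd^l$ had $\alpha:=\tilde\phi(f_{j_s})\notin P_j\rd/P_j\rd^l$, then $\alpha\rd^{l_s}$ would be nontrivial in $P_j\rd^{l_s}/P_j\rd^l$ (using $l_s<l$), contradicting $\tilde\phi(P_{j_s}\rd^{l_s})=0$. For the reverse inclusion, $M_t\rd$ is generated as a $\La$-module by its top $\bigoplus_k S_k^{a_{jk}}$, and each such $S_k$ appears as the top of a cyclic submodule of $M_t\rd$ of Loewy length at most $l-1$, hence as the image of some map $M_{(k,l-1)}\to M_t$ with $(k,l-1)>t$; the sum of these images equals $M_t\rd$.

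For condition 1) of Definition~\ref{quasi-her}, a direct computation gives $[\Delta_t:\Sigma_s]=\dim\operatorname{Im}\bigl(\Hom_\La(M_s,M_t)\to\Hom_\La(M_s,S_j)\bigr)$, which equals $1$ when $j_s=j$ and $l_s\geq l$ and $0$ otherwise; hence the top of $\Delta_t$ is $\Sigma_t$ and $\Xi_t=\ker(\Delta_t\twoheadrightarrow\Sigma_t)$ has only simple factors $\Sigma_s$ with $s<t$, so the Jordan--H\"older filtration realizes $\Xi_t\in\Fl(\Sigma_1,\dots,\Sigma_{t-1})$. For condition 2), I would induct on $l$ for $t=(j,l)$: the base $l=1$ gives $\Theta_t=0$, and for the inductive step I would apply $\widehat{(-)}$ to the surjection $\bigoplus_k M_{(k,l-1)}^{a_{jk}}\twoheadrightarrow M_t\rd$ constructed above. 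The source becomes $\bigoplus_k \Pi_{(k,l-1)}^{a_{jk}}$, which by the inductive hypothesis is $\Delta$-filtered with pieces $\Delta_{s'}$ satisfying $s'\geq(k,l-1)>t$; the target is $\Theta_t$; and one verifies via appropriate $\Ext^1_\La$-vanishing that $\widehat{(-)}$ keeps this surjective and that the resulting kernel is itself $\Delta$-filtered with pieces $\Delta_{s'}$ for $s'>t$.

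For well-formedness one must exhibit, for each $t$, a module $\Psi_t\in\Fl(\Delta_{t+1},\dots,\Delta_n)$ together with a morphism $\pi_t\colon\Pi_t\to\Psi_t$ such that the associated two-term complex $\K_t$ lies in the left-orthogonal of $\U_{t+1}$ inside $\U_t$. I expect this step to be the main technical obstacle: the natural candidate is $\Psi_t=\widehat{M'_t}$ for a suitable quotient (or short extension) $M'_t$ of $M_t$ lying in the additive closure of $\{M_s:s>t\}$, with $\pi_t$ induced by a canonical $\La$-morphism. Verifying the orthogonality property then reduces to checking it against each $\Delta_{s'}$ for $s'>t$, which should follow by combining the $\Delta$-filtrations of the previous step with careful bookkeeping on the Loewy structure and the specific form of the order $\be$ chosen in the Auslander construction.
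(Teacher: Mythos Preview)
Your overall architecture is sound and largely parallels the paper's proof: baseness, identification of the standard modules via a trace/Hom computation, then conditions 1) and 2), then well-formedness. Your computation of $[\Delta_t:\Sigma_s]$ is exactly the paper's key formula $\Hom_{\Ga}(\Pi_{t'},\Delta_t)\cong\kk$ iff $j'=j$ and $l'\ge l$, and this formula is what drives everything. There are, however, two places where the paper does things more efficiently and where your proposal either takes a detour or leaves a real gap.

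\textbf{Well-formedness.} You flag this as ``the main technical obstacle'' and leave $\Psi_t$ unspecified. In fact it is the easiest step once the Hom formula is in hand, and the answer is much simpler than a ``short extension'': for $t=(j,l)$ with $l\ge 2$ take $\Psi_t=\Pi_{t'}$ where $t'=(j,l-1)$, with $\pi_t$ induced by the canonical quotient $M_t=P_j/P_j\rd^{l}\twoheadrightarrow P_j/P_j\rd^{l-1}=M_{t'}$ (and $\Psi_t=0$ when $l=1$). Since $t'>t$, the projective module $\Pi_{t'}$ lies in $\Fl(\Delta_{t+1},\dots,\Delta_n)$ by condition 2). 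The required isomorphism $\Hom_{\Ga}(\Pi_{t'},\Delta_s)\stackrel{\sim}{\to}\Hom_{\Ga}(\Pi_t,\Delta_s)$ for every $s>t$ is immediate from the Hom formula: if $j_s\ne j$ both sides vanish, and if $j_s=j$ then $s>t$ forces $l_s<l$, i.e.\ $l_s\le l-1$, so both sides equal $\kk$ and the induced map is the identity. This is exactly the $M'_t$ you were groping for (a quotient of $M_t$ that is itself one of the $M_s$ with $s>t$), but you should state it explicitly rather than leave it as an expectation.

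\textbf{Condition 2).} Your induction on $l$ is workable but heavier than necessary, and the two steps you defer to ``appropriate $\Ext^1_\La$-vanishing'' --- that $\widehat{(-)}$ preserves the surjection and that the resulting kernel is again $\Delta$-filtered with the right indices --- are not free; the functor $\Hom_\La(M,-)$ is only left exact. The paper avoids this entirely: it first proves, by induction on the length of an arbitrary $\La$-module $N$, that $\widehat N\in\Fl(\Delta_1,\dots,\Delta_n)$, using only that every nonzero submodule of $\widehat{S}_k$ is a standard module (this follows from the composition series $\Delta_{(j,\LL(P_j))}\subset\cdots\subset\Delta_{(j,1)}=\widehat{S}_j$ implied by the Hom formula). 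Then the refinement $\Theta_t\in\Fl(\Delta_{t+1},\dots,\Delta_n)$ comes from a single observation: $M_t\rd$ has Loewy length $l-1$, so every map $P_k\to M_t\rd$ factors through $P_k/P_k\rd^{l-1}$, whence every map $\Pi_{t''}\to\Theta_t$ factors through some $\Pi_s$ with $s>t$. This is shorter and sidesteps the exactness issues in your approach.
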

\begin{proof}
We already know that $\Ga$ is basic. It is proved in \cite{DR2} that the algebra $(\Ga, \be)$ is quasi-hereditary.
However, let us show this.

At first, we calculate $\Hom_{\Ga}(\Pi_{t'}, \Delta_t)$ for any $t', t\in T.$ For each $t=(j, l)\in T$ we have $M_t/M_t\rd\cong P_j/P_j\rd\cong S_j,$ where  $M_t=P_j/P_j\rd^l$ as above. Therefore, there is the following commutative diagram:
\begin{align*}
\xymatrix{
0\ar[r]&\Theta_t\ar[r]\ar@{=}[d]&\Pi_t\ar[r]\ar@{=}[d]&\Delta_t\ar[r]\ar@{_{(}->}[d]&0\\
0\ar[r]&\widehat{M_t\rd}\ar[r]&\widehat{M_t}\ar[r]&\widehat{S}_j
}
\end{align*}
Since $S_j=M_s$ for $s=(j, 1)\in T,$ we obtain that $\widehat{S}_j\cong \Pi_s\cong\Delta_{s}$ is a projective and standard module simultaneously for any $1\le j\le m.$
The isomorphisms
\[
\Hom_{\Ga}(\Pi_{t'}, \widehat{S}_j)\cong\Hom_{\La}(M_{t'}, S_j)\cong\Hom_{\La}(P_{j'}, S_j)
\]
tell us that if $\Hom_{\Ga}(\Pi_{t'}, \widehat{S}_j)\ne 0,$ then it is one-dimensional and  $t'=(j, l')$ with the same $j.$
Since for any $t=(j, l)$ there is an inclusion $\Delta_t\subseteq \widehat{S}_j,$ we conclude that if
$\Hom_{\Ga}(\Pi_{t'}, \Delta_t)\ne 0,$ then $t'=(j, l')$ with the same $j.$
Furthermore, a nontrivial morphism $\Pi_{t'}=\widehat{M}_{t'}\to\Delta_t\subset\widehat{S}_j$ is induced by a nontrivial morphism $M_{t'}\to S_j,$ on the one hand.
On the other hand, it can be lifted to a map $\Pi_{t'}\to\Pi_t=\widehat{M}_t$ because $\Pi_{t'}$ is projective.
Therefore, the map $M_{t'}\to S_j$ should go through a map to $M_t.$ Hence, in this case $l'\ge l.$
Thus, for any $t=(j, l)$ and $t'=(j', l')$ we obtain
\begin{align}\label{hom_pr_st}
\Hom_{\Ga}(\Pi_{t'}, \Delta_t)=
\begin{cases}
\kk,\quad \text{if}\quad j'=j \quad\text{and}\quad l'\ge l,\\
0,\quad \text{otherwise.}
\end{cases}
\end{align}

In particular, we obtain that $\dim_{\kk}\Delta_t=\LL(P_j)-l+1,$ for $t=(j, l).$ Moreover, the sequence
\begin{align}\label{comp_ser}
0\subset\Delta_{(j,\LL(P_j))}\subset\cdots\subset\Delta_{(j, 1)}=\widehat{S}_j\quad\text{with}\quad \Delta_{(j,l)}/\Delta_{(j, l+1)}\cong\Si_{(j,l)},
\end{align}
where $\Si_{(j,l)} $ are the corresponding simple $\Ga$\!--modules, gives a composition series for $\widehat{S}_j\cong \Pi_s\cong\Delta_{s},$ where $s=(j, 1).$
This implies that any simple quotient of a standard module $\Delta_t$ is only $\Si_t,$ and any submodule of the standard module $\Delta_t$ with $t=(j, l)$ is isomorphic to a standard module $\Delta_{t'}$
where $t'=(j, l')$ with $l'\ge l.$
As a consequence, we also obtain that for any $t=(j, l)$ the kernel $\Xi_t$ of the canonical morphism from $\Delta_t$ to the corresponding simple module $\Si_t$  is isomorphic to
the standard module $\Delta_{t'}$ with $t'=(j, l+1).$ Hence, taking into account the composition series (\ref{comp_ser}), we see that $\Xi_t\cong\Delta_{t'}$ belongs to the subcategory $\Fl(\Si_1,\ldots, \Si_{t-1}).$
Thus, condition 1) of Definition \ref{quasi-her} holds.

To prove that the algebra $\Ga$ is quasi-hereditary, we have to show that the modules $\Theta_t$ belong to the subcategories
$\Fl(\Delta_{t+1},\ldots, \Delta_{n}).$ Let us prove that any $\Ga$\!--module of the form $\widehat{N}$ belongs to the subcategory $\Fl(\Delta_{1},\ldots, \Delta_{n}).$
We proceed by induction on the length of a $\La$\!--module $N.$ The base of induction are the simple modules $S_j:$ and we already know that $\widehat{S}_j$ are standard by themselves.
  Consider an exact sequence
\[
0\lto N'\lto N\lto S_k\lto 0
\]
coming from a composition series of $N.$ By induction hypothesis, the $\Ga$\!--module $\widehat{N}'$ belongs to $\Fl(\Delta_{1},\ldots, \Delta_{n}).$
Besides, the quotient module $\widehat{N}/\widehat{N}'$  is standard as a nonzero submodule of $\widehat{S}_k.$ Thus, $\widehat{N}$ belongs to $\Fl(\Delta_{1},\ldots, \Delta_{n})$ too.

Let us consider the $\Ga$\!--module $\Theta_t=\widehat{M_t\rd}$ for $t=(j, l).$ We would like to show that $\Theta_t$ belongs not only to
the subcategory $\Fl(\Delta_{1},\ldots, \Delta_{n})$ but to the subcategory $\Fl(\Delta_{t+1},\ldots, \Delta_{n})$ as well. It is enough to check that any morphism
from $\Pi_{t'}$ to $\Theta_t=\widehat{M_t\rd}$ can be factorized through $\Pi_{s}$ with $s>t.$
The Loewy length of the $\La$\!--modules $M_t\rd=P_j\rd/P_j\rd^l$ is equal to $l-1.$
Hence, any morphism from a projective module $P_k$ to $M_t\rd$ goes through the module $P_k/P_k\rd^{l-1}.$
This implies that $\Theta_t\in \Fl(\Delta_{t+1},\ldots, \Delta_{n}).$
Thus, condition 2) of Definition \ref{quasi-her} also holds,
and the algebra $(\Ga, \be)$ is quasi-hereditary with $\Delta_t, t\in T$ as the standard modules.

Finally, let us check that $\Ga$ is well-formed. Consider the projective $\Ga$\!-module $\Pi_t$ with $t=(j, l).$
The canonical morphism $P_j/P_j\rd^l\to P_j/P_j\rd^{l-1}$
induces a canonical map
$\pi_t:\Pi_t\to \Pi_{t'},$ where $t'=(j, l-1).$ The calculation above
shows that the natural map $\Hom(\Pi_{t'}, \Delta_s)\to \Hom(\Pi_t, \Delta_s)$
  is an isomorphism for any $\Delta_s,$ when $s> t.$ Since $\Delta_{t+1},\ldots,\Delta_n$ generate  the subcategory $\U_{t+1}\subset\prf\Ga,$
  the natural map $\Hom(\Pi_{t'}, -)\to \Hom(\Pi_t, -)$ is an isomorphism on the whole subcategory $\U_{t+1}\subset\prf\Ga.$ Thus, $\Psi_t\cong\Pi_{t'},$ and
the algebra $(\Ga, \be)$ is well-formed.
\end{proof}

The algebra $\Ga=\End_{\La}(M)$ has finite global dimension, and the bounded derived category of finite $\Ga$\!--modules $\bD^b(\md\Gamma)$
is equivalent to the triangulated category
of perfect complexes $\prf\Gamma.$
Consider the $\Ga$\!--module $\Pi=\Hom_{\La}(M, \La)=\widehat{\La}.$ It is projective and is actually isomorphic to $\bigoplus_{j=1}^m \Pi_{s_j},$ where $s_j=(j, l_j)$ and $l_j=\LL(P_j)$ is the Loewy length of $P_j.$
Moreover, we have $\End_{\Ga}(\Pi)\cong\La.$
Thus, the projective $\Ga$\!--modules $\Pi$ is a $\La\hy\Ga$\!--bimodule, and it gives us two functors
\[
(-)\stackrel{\bL}{\otimes}_{\La} \Pi: \D(\Md\La)\lto\D(\Md\Ga)\quad\text{and}\quad
\bR\Hom_{\Ga}(\Pi, -):\D(\Md\Ga)\lto\D(\Md\La)
\]
that are adjoint. The first functor is fully faithful and sends perfect complex to perfect ones, while the second functor is a quotient. They induce the
the following functors:
\[
(-)\stackrel{\bL}{\otimes}_{\La} \Pi: \prf\La\lto\prf\Ga\quad\text{and}\quad
\bR\Hom_{\Ga}(\Pi, -):\prf\Ga\lto\D^b(\md\La)
\]
Thus, the $\La\hy\Ga$\!--bimodule $\Pi$ defines a quasi-functor $\mR: \prfdg\La\to \prfdg\Ga,$ i.e. we have an ff-morphism of noncommutative schemes
$\mr: \dW\to\dV,$ where $\dV=\prfdg\La$ and $\dW=\prfdg\Ga.$ The morphism $\mr$ is the simplest and very important example of a smooth resolution of singularities  of the noncommutative scheme $\dV$
(see Definition \ref{des}).

\begin{theorem}\label{main_fd_algebra}
Let $\La$ be a basic finite dimensional algebra over a field $\kk.$ Then there exist a smooth projective scheme $X$ and a quasi-functor
$\mF: \prfdg\La\to\prfdg X$ such that the following conditions hold:
\begin{enumerate}
\item[1)] the induced homotopy functor $F=\Ho(\mF):\prf\La\to \prf X$ is fully faithful;
\item[2)] the indecomposable projective modules $P_i$ go to vector bundles $\P_i$ on $X$ under $F,$ and the rank of $\P_i$ is equal to $\dim_{\kk} P_i;$
\item[3)] the scheme $X$ is a tower of projective bundles and has a full exceptional collection.
\end{enumerate}
\end{theorem}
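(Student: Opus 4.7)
The plan is to combine the Auslander construction of the preceding subsection with Theorem~\ref{main_quiver}. Namely, pass from $\La$ to its Auslander algebra $\Ga = \End_\La(\bigoplus_{t\in T} M_t)$, which we have already shown is basic well-formed quasi-hereditary; apply Theorem~\ref{main_quiver} to $(\Ga, \be)$ to geometrically realize $\prfdg\Ga$; then compose with the natural morphism $\prfdg\La \to \prfdg\Ga$ coming from the $\La\hy\Ga$\!--bimodule $\Pi = \widehat{\La}$.

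Explicitly, $\Pi$ is perfect as a right $\Ga$\!--module, so it represents a quasi-functor $\mR \colon \prfdg\La \to \prfdg\Ga$; the canonical isomorphism (\ref{funct}) ensures that $\Ho(\mR)$ is fully faithful, and $\mR$ sends each indecomposable projective $\La$\!--module $P_j$ to $\widehat{P_j} \cong \Pi_{s_j}$ with $s_j = (j, \LL(P_j))$. Applying Theorem~\ref{main_quiver} to $(\Ga, \be)$ produces a smooth projective scheme $X$, which is a tower of projective bundles with a full exceptional collection, together with a quasi-functor $\mG \colon \prfdg\Ga \to \prfdg X$ for which $\Ho(\mG)$ is fully faithful and $\Ho(\mG)(\Delta_t)$ is a line bundle $\L_t$ on $X$ for every standard module $\Delta_t$. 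Set $\mF := \mG \ldot \mR$. Conditions (1) and (3) are then immediate, and since $\Ga$ is quasi-hereditary each projective $\Pi_{s_j}$ admits a standard filtration; applying $\Ho(\mG)$ yields a filtration of $\Ho(\mF)(P_j) = \Ho(\mG)(\Pi_{s_j})$ by subbundles with line bundle quotients, so this object is a vector bundle $\P_j$ on $X$.

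The main obstacle is the rank identity $\mathrm{rank}(\P_j) = \dim_\kk P_j$; a priori the rank only equals the length of a standard filtration of $\Pi_{s_j}$, so one has to identify this length with $\dim_\kk P_j$. I plan to verify it by induction on $\LL(M_t)$, establishing the stronger claim that for every Auslander index $t$ the length of a standard filtration of $\widehat{M_t}$ equals $\dim_\kk M_t$. The inductive step is supplied by the short exact sequence $0 \to \widehat{M_t\rd} \to \widehat{M_t} \to \Delta_t \to 0$ together with the nested description of standards in (\ref{comp_ser}), which lets one account for each one-dimensional simple composition factor of $M_t$ as a separate standard quotient in the filtration. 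Specializing to $t = s_j$, where $M_{s_j} = P_j$, yields $\mathrm{rank}(\P_j) = \dim_\kk P_j$; the ranks can also be tracked concretely through the extensions $0 \to p^* G_{k+1}(\Theta_t) \to \wt\P_t \to \cO_{X_k}(-1) \to 0$ appearing in Proposition~\ref{step}, from which $\mathrm{rank}(\wt\P_t) = 1 + \mathrm{rank}(G(\Theta_t))$ and the induction closes.
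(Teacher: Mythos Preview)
Your overall strategy coincides with the paper's: define $\mF=\mG\ldot\mR$ with $\mR$ coming from the Auslander bimodule $\Pi=\widehat{\La}$ and $\mG$ from Theorem~\ref{main_quiver}, and read off conditions (1), (3), and the vector-bundle part of (2) exactly as you do.

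The only divergence is in the rank computation, and there your induction as written does not quite close: you induct over Auslander indices $t$, but the module $M_t\rd=P_j\rd/P_j\rd^l$ appearing in the exact sequence $0\to\widehat{M_t\rd}\to\widehat{M_t}\to\Delta_t\to 0$ is \emph{not} of the form $M_{t'}$ for any $t'\in T$, so the inductive hypothesis does not apply to $\widehat{M_t\rd}$. The fix is easy and is implicit in your parenthetical remark about simple composition factors: run the induction over \emph{all} finite $\La$\!--modules $N$, on $\dim_\kk N$, using a composition series $0\to N'\to N\to S_k\to 0$; then $\widehat{N}/\widehat{N'}$ is a nonzero submodule of $\widehat{S_k}$ and hence a single standard module by~(\ref{comp_ser}), giving $\#\{\text{standards in }\widehat{N}\}=\dim_\kk N$.

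The paper bypasses this induction with a one-line count: by~(\ref{hom_pr_st}) one has $\Hom_\Ga(\Pi,\Delta_t)\cong\kk$ for every $t\in T$, so for any $N\in\Fl(\Delta_1,\dots,\Delta_n)$ the rank of $G(N)$ equals $\dim_\kk\Hom_\Ga(\Pi,N)$; specializing to $N=\Pi_{s_i}=\widehat{P_i}$ and invoking~(\ref{funct}) gives $\operatorname{rk}\P_i=\dim_\kk\Hom_\Ga(\widehat{\La},\widehat{P_i})=\dim_\kk\Hom_\La(\La,P_i)=\dim_\kk P_i$. This is cleaner than the inductive route and avoids tracking filtrations altogether.
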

\begin{proof}
The quasi-functor $\mF$ can be defined as the composition of the quasi-functor $\mR:\prfdg\La\to\prfdg\Ga$ defined above and the  quasi-functor $\mG:\prfdg\Ga\to\prfdg X$ constructed in Theorem \ref{main_quiver}.
Thus, conditions 1) and 3) hold. Moreover, the indecomposable projective $\La$\!--modules $P_i$ go to projective $\Ga$\!--modules under the quasi-functor $\mR,$ and
after that they go to vector bundles which will be denoted as $\P_i.$

Let us consider the $\Ga$\!--module $\Pi$ which as a $\La\hy\Ga$\!--bimodule defines the quasi-functor $\mR.$
It is the direct sum $\bigoplus_{j=1}^m \Pi_{s_j},$ where $s_j=(j, l_j)$ and $l_j=\LL(P_j)$ is the Loewy length of $P_j.$
We have isomorphisms  $\mR(\La)\cong\Pi,$ while $\mR(P_i)\cong\Pi_{s_i}.$
Property (\ref{hom_pr_st}) implies that
$\Hom_{\Ga}(\Pi, \Delta_t)\cong \kk$ for any $t\in T.$ Since the standard modules go to line bundles under the quasi-functor $\mG,$ for any $\Ga$\!--module $N\in\Fl(\Delta_{1},\ldots, \Delta_{n})$ the rank of the vector bundle $\mG(N)$ is equal to $\Hom_{\Ga}(\Pi, N).$ In particular, we obtain the following equalities:
\[
\operatorname{rk} \mF(P_i)=\operatorname{rk}\mG(\Pi_{s_i})=\dim_{\kk} \Hom_{\Ga}(\Pi, \Pi_{s_i})=\dim_{\kk}\Hom_{\La}(\La, P_i)=\dim_{\kk} P_i.
\]
Thus, condition 2) also holds.
\end{proof}

In the case when the base field $\kk$ is algebraically closed any finite dimensional algebra is Morita equivalent to a basic finite dimensional algebra.
This means that any finite dimensional algebra $A$ is isomorphic to $\End_{\La}(\bigoplus_{i=1}^m P_i^{\oplus k_i}),$ where $\La$ is basic and $P_1,\ldots, P_m$
is the complete set of indecomposable projective $\La$\!--modules.
Therefore, we have the following corollary.

\begin{corollary}
Let $\kk=\overline{\kk}$ be an algebraically closed field. Then for any finite dimensional $\kk$\!--algebra $A$ there exist a smooth projective scheme $X$ and a vector bundle $\E$ on $X$
such that $\End_{X}(\E)\cong A$ and $\Ext^l_{X}(\E, \E)=0$ for all $l>0.$ Moreover, such $X$ can be constructed as  a tower of projective bundles, and $\operatorname{rk}\E\le\dim_{\kk}A.$
\end{corollary}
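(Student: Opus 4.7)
The plan is to combine Theorem \ref{main_fd_algebra} with the classical Morita reduction to the basic case, which is available precisely because $\kk$ is algebraically closed. As recalled in the paragraph preceding the corollary, for algebraically closed $\kk$ any finite dimensional $\kk$\!--algebra $A$ is of the form $A\cong\End_{\La}(M)$ with $\La$ basic and $M=\bigoplus_{i=1}^{m} P_{i}^{\oplus k_{i}}$ a progenerator of $\Md\La,$ where $P_{1},\ldots,P_{m}$ is the complete list of indecomposable projective $\La$\!--modules and each $k_{i}\ge 1.$ I would fix such $\La$ and $M$ at the outset.

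Next I would apply Theorem \ref{main_fd_algebra} to $\La$ to obtain a smooth projective scheme $X$ (a tower of projective bundles with a full exceptional collection) together with a quasi-functor $\mF\colon\prfdg\La\to\prfdg X$ such that the induced homotopy functor $F=\Ho(\mF)\colon\prf\La\to\prf X$ is fully faithful and sends each $P_{i}$ to a vector bundle $\P_{i}$ with $\operatorname{rk}\P_{i}=\dim_{\kk}P_{i}.$ I then put $\E:=F(M)\cong\bigoplus_{i=1}^{m}\P_{i}^{\oplus k_{i}}.$ The fully faithful property of $F$ gives
\[
\End_{X}(\E)\cong\End_{\La}(M)\cong A\quad\text{and}\quad\Ext^{l}_{X}(\E,\E)\cong\Ext^{l}_{\La}(M,M),
\]
and the latter vanishes for $l>0$ because $M$ is a projective $\La$\!--module (with no hypothesis on the global dimension of $\La$ needed).

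For the rank bound I would exploit $\La=\bigoplus_{i=1}^{m}P_{i},$ so $\dim_{\kk}P_{j}=\sum_{i}\dim_{\kk}\Hom_{\La}(P_{i},P_{j}).$ Hence
\[
\operatorname{rk}\E=\sum_{j}k_{j}\dim_{\kk}P_{j}=\sum_{i,j}k_{j}\dim_{\kk}\Hom_{\La}(P_{i},P_{j})\le\sum_{i,j}k_{i}k_{j}\dim_{\kk}\Hom_{\La}(P_{i},P_{j})=\dim_{\kk}\End_{\La}(M)=\dim_{\kk}A,
\]
where the inequality uses $k_{i}\ge 1$ for every $i.$ This yields $\operatorname{rk}\E\le\dim_{\kk}A$ and completes all the claims.

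The only nontrivial ingredient is Theorem \ref{main_fd_algebra} itself, which is already established; the remaining work, namely the Morita reduction to a basic algebra and the bookkeeping with a fully faithful embedding compatible with direct sums, endomorphism algebras, and Ext groups, is routine. Consequently no real obstacle arises, and the main substantive choice is to take the progenerator $M$ (rather than $\La$) as input to $F,$ so that the resulting bundle $\E$ inherits both the endomorphism algebra $A$ and the tight rank estimate.
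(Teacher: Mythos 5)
Your proposal is correct and follows exactly the route the paper intends: the corollary is deduced from Theorem \ref{main_fd_algebra} via the Morita reduction $A\cong\End_{\La}(\bigoplus_i P_i^{\oplus k_i})$ with $\La$ basic, taking $\E=F(\bigoplus_i P_i^{\oplus k_i})$ and using full faithfulness of $F$ together with $\operatorname{rk}\P_i=\dim_{\kk}P_i.$ Your explicit verification of the rank inequality (using $k_i\ge 1$ and $\La=\bigoplus_i P_i$) is a detail the paper leaves implicit, but it is the intended bookkeeping.
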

\begin{remark}
{\rm
There are many reasons to believe that the assertion of Theorem \ref{main_fd_algebra} also holds for an arbitrary algebra with one refinement: the variety
$X$ can not be chosen to be a tower of projective bundles anymore, but an appropriate twisted form of such a variety, i.e. becomes isomorphic to a tower of projective bundles after an extension of the base field.
}
\end{remark}

\end{document}